\documentclass[11pt,letterpaper]{amsart}

\usepackage[final
]{showkeys}

\allowdisplaybreaks

\usepackage{AKstyle}
\usepackage[toc,page]{appendix}
\numberwithin{equation}{section}

\usepackage{amsmath}
\usepackage{caption}
\usepackage[labelfont=rm]{subcaption}
\usepackage{scalerel,stackengine}
\stackMath
\newcommand\reallywidehat[1]{%
\savestack{\tmpbox}{\stretchto{%
  \scaleto{%
    \scalerel*[\widthof{\ensuremath{#1}}]{\kern-.6pt\bigwedge\kern-.6pt}%
    {\rule[-\textheight/2]{1ex}{\textheight}}
  }{\textheight}%
}{.8ex}}%
\stackon[1pt]{#1}{\tmpbox}%
}
\parskip 1ex
\usepackage[pagebackref=true, colorlinks]{hyperref}

\newcommand{\startingpower}{n}
\newcommand{\smallerpower}{m}

\newcommand{\subringtwo}{R}
\newcommand{\subringone}{R_1}
\newcommand{\subringthree}{R_1}
\newcommand{\cosetpara}{a}
\newcommand{\subringfactor}{b}

\renewcommand{\phi}{\varphi}

\newtheorem{thmx}{{\normalfont \bf Theorem}}

\hypersetup{pdffitwindow=true,linkcolor=blue,citecolor=blue,urlcolor=blue,menucolor=blue}

\usepackage{comment}

\begin{document}

\author{Jincheng Tang}
\thanks{}
\email{tangent@connect.hku.hk}
\address{Department of Mathematics, The University of Hong Kong}

\author{Xin Zhang}
\thanks{Tang and Zhang are supported by ECS grant 27307320, GRF grant 17317222 and NSFC grant 12001457 from the second author.}
\email{xz27@hku.hk}
\address{Department of Mathematics, The University of Hong Kong}

\title[Sum-product]
{Sum-product in quotients of rings of algebraic integers}

\begin{abstract}
We obtain a bounded generation theorem over $\cO/\fa$, where $\cO$ is the ring of integers of a number field and $\fa$ a general ideal of $\cO$.  This addresses a conjecture of Salehi-Golsefidy. Along the way, we obtain nontrivial bounds for additive character sums over $\cO/\cP^n$ for a prime ideal $\cP$ with the aid of certain sum-product estimates. 
\end{abstract}
\date{\today}
\maketitle

\section{Introduction}

A celebrated theorem of Bourgain, Katz and Tao asserts that for a set $A\subset \mathbb F_p$ of intermediate size, i.e., $p^\delta<|A|<p^{1-\delta}$ for $0<\delta<1$, then $|A+A|+|A\cdot A|> |A|^{1+\epsilon}$ for some $\epsilon>0$ depending only on $\delta$ \cite{BKT04}.  This fundamental theorem has found many diverse applications in mathematics and computer science.  In the same paper, the authors also proved that for a set $A$ of intermediate size in an arbitrary field, if neither $A+A$ nor $A\cdot A$ grows significantly, then $A$ is essentially a very large subset of a scalar multiple of a subfield.  This phenomenon was further generalized by Tao to an arbitrary ring with few zero divisors \cite{Tao2008}.  \par

For rings with many zero divisors such as $\mathbb Z/q\mathbb Z$, $\mathbb F_p[t]/(t^m)$, $\cO/\cP^n$ ($\cO$ is the ring of integers of a number field and $\cP$ is a prime ideal), one needs to make extra assumptions to formulate the expansion statement; otherwise, simple counter examples can be found.  For instance, take $$A=\{p^mk: 1\leq k\leq p^{\frac{m}{10^4}}\}\subset \mathbb Z/p^{2m}\mathbb Z,$$ then $|A+A|<2|A|$ and $|A\cdot A|=1$, but $A$ is not a large subset of a scalar multiple of any subring.  \par

In the paper \cite{BG09} Bourgain and Gamburd proved a sum-product theorem over $\cO/\cP^n$, with which they proved certain spectral gap property, which is also called the super approximation property for Zariski-dense subgroups of $\text{SL}_d(\mathbb Z)$ over prime power moduli.  Later Salehi-Golsefidy proved a sum product theorem over $\cO/\cP^n$ by an entropy method originally due to Lindenstrauss and Varj\'u (see Theorem \ref{1053}), which allowed him to prove super approximation property for more general groups over more general but still restricted moduli \cite{SG19}.  It is worth noting that the expansion constant in \cite{BG09} has uniformity over $n$ but has dependence on $\cP$, while the constant in \cite{SG20} only depends on the extension degree $[\cO: \mathbb Z]$. The uniformity given by Theorem \ref{1053} will be a crucial ingredient in the proof of our main theorem.  \par

Closely related to sum-product expansion is the bounded generation phenomenon, which in the setting of $\mathbb F_p$, says that for a set $A\subset \mathbb F_p$ with $|A|>p^\delta$, then for some $r_1, r_2\in\mathbb Z$ depending only on $\delta$, we have $\sum_{r_2}A^{r_1}=\mathbb F_p$.  Here $A^{r_1}$ is the $r_1$-fold product of the set $A$, and the set $\sum_{r_2}A^{r_1}$ is the $r_2$-fold sum of the set $A^{r_1}$.  Bounded generation follows easily from sum-product expansion.  To see this, let $\epsilon$ (depending on $\delta$) be the expansion constant.  Then by taking either sum or product of $A$, one can obtain a larger set with size increase by at least a factor of $p^{\delta\epsilon}$.  Iterate this for at most $[\frac{1}{\delta\epsilon}]$ times, we obtain a sum-product set of $A$ of size at least, say $p^{\frac{9}{10}}$.  Then a standard Fourier argument shows that a further sum-product set is $\mathbb F_p$.  \par
In practice, the logic is in reverse order. Proving a bounded generation result is often on the way of proving a sum-product expansion theorem. See \cite{BKT04}, and see also Bourgain's treatment of $\mathbb Z_q$ for an arbitrary $q$ \cite{Bou08}.


In the paper \cite{SG20} Salehi-Golsefidy made the following bounded generation conjecture on a general quotient of the integer ring of a general number field: 
\begin{conj}[Conjecture 6, \cite{SG20}]\label{BGC} Suppose $0<\delta\ll 1$, $d$ is a positive integer, and $N_0\gg_{d,\delta} 1$ is a positive integer.  Then there are $0<\epsilon: =\epsilon(\delta, d)$ and positive integers $C_1=C_1(\delta, d), C_2=C_2(\delta, d), C_3=C_3(\delta, d)$ such that for any number field $K$ of degree at most $d$ the following holds:  Let $\mathcal O$ be the ring of integers of $K$.  Suppose $\mathfrak a$ is an ideal of $\mathcal O$ such that $N(\mathfrak a):= |\mathcal O/\mathfrak a |\geq N_0$, and suppose $A\subseteq \mathcal O$ such that 
$$|\pi_{\mathfrak a}(A)|\geq|\pi_{\mathfrak a}(\mathcal O)|^{\delta}.$$
Then there are an ideal $\mathfrak a'$ of $\mathcal O$, and $a\in\mathcal O$ such that 
\begin{align*}
&\mathfrak a^{C_1}\subseteq \mathfrak a',\\
&\pi_{\mathfrak a'}(\mathbb Za)\subset \pi_{\mathfrak a'}\left(\sum_{C_3}A^{C_2} -\sum_{C_3}A^{C_2}\right),\\
& |\pi_{\mathfrak a'}(\mathbb Za)|\geq N(\mathfrak a)^\epsilon.
\end{align*}
\end{conj}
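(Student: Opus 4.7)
The plan is to reduce the general ideal $\mathfrak a$ to its prime power components via the Chinese Remainder Theorem, apply Salehi-Golsefidy's sum-product theorem (Theorem \ref{1053}) within each component, and reassemble the data at the end. Write $\mathfrak a = \prod_{i=1}^{s} \mathcal{P}_i^{n_i}$ so that $\mathcal O/\mathfrak a \cong \prod_i \mathcal O/\mathcal P_i^{n_i}$. The density hypothesis $|\pi_{\mathfrak a}(A)| \geq N(\mathfrak a)^\delta$ combined with a pigeonhole argument isolates a subcollection of dominant prime power factors in which $A$'s projection already has substantial density; the remaining factors will be absorbed into the eventual choice of $\mathfrak a'$.

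Within a fixed prime power $\mathcal O/\mathcal P^n$ with $q = N(\mathcal P)$, I would iterate Theorem \ref{1053}: each sum-product step either enlarges the projection by a factor $q^{n\eta}$ for some $\eta > 0$, or else forces concentration in a proper subring, which in this setting corresponds to a canonical descent to a quotient $\mathcal O/\mathcal P^{m}$ for some $m < n$. After boundedly many iterations one reaches either a near-full sum-product set inside $\mathcal O/\mathcal P^n$, or a near-full set inside some smaller $\mathcal O/\mathcal P^{m}$. The uniformity of Theorem \ref{1053} in both $n$ and $\mathcal P$, depending only on $d=[K:\mathbb Q]$, is essential so that the number of iterations, and hence $C_1, C_2, C_3$, are uniform over all primes that occur in $\mathfrak a$.

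The next step is to extract a scalar progression $\mathbb Z a$ from a near-full sum-product set $S$ modulo some $\mathcal P^{m}$. This is where the additive character sum bounds over $\mathcal O/\mathcal P^n$ advertised in the abstract enter: once $S$ has density close to $1$, a Fourier/circle-method argument shows that $S-S$ contains a long $\mathbb Z$-progression $\mathbb Z a$ for a suitable $a$ of small $\mathcal P$-adic valuation, provided one can establish nontrivial cancellation in additive character sums indexed by iterated sum-product sets of $A$. These cancellation bounds are themselves derived from the sum-product estimates, producing the interplay hinted at in the introduction.

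Finally, patch the local data via CRT: set $\mathfrak a' = \prod_i \mathcal P_i^{m_i}$ with the exponents $m_i$ chosen uniformly so that $\mathfrak a^{C_1} \subseteq \mathfrak a'$, and then use CRT to produce a single integer $a \in \mathcal O$ whose $\mathbb Z$-multiples realize the desired scalar progression in every component simultaneously. The main obstacle I expect is balancing the two regimes across prime factors---those where the iteration reaches near-full size in $\mathcal O/\mathcal P_i^{n_i}$ and those where it stabilizes at a proper quotient $\mathcal O/\mathcal P_i^{m_i}$---while keeping $C_1, C_2, C_3$ uniform and the final size $|\pi_{\mathfrak a'}(\mathbb Z a)| \geq N(\mathfrak a)^\epsilon$. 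It is precisely the prime-independence of the expansion constant in Theorem \ref{1053} that makes this simultaneous balancing feasible, and any nonuniform loss from the character sum step must be controlled carefully so as not to disturb this balance.
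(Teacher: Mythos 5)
Your proposal correctly identifies some of the ingredients (Theorem \ref{1053}, the need to handle subrings of $\cO/\cP^n$, a Fourier step to extract a large coset at the end), but the overall architecture --- analyze each prime power component independently and then ``patch the local data via CRT'' --- is exactly the step that fails, and it is the central difficulty the paper is organized around. The hypothesis $|\pi_{\fa}(A)|\geq N(\fa)^{\delta}$ gives no lower bound on any individual projection $|\pi_{\cP_i^{n_i}}(A)|$ when $\fa$ has many prime factors, and more fatally, even if you produce for each $i$ a progression $\mathbb Z a_i$ inside $\pi_{\cP_i^{m_i}}(\sum_{C_3}A^{C_2}-\sum_{C_3}A^{C_2})$, you cannot CRT these into a single $a$ with $\pi_{\fa'}(\mathbb Z a)\subset\pi_{\fa'}(\sum_{C_3}A^{C_2}-\sum_{C_3}A^{C_2})$: the sum-product set is not a product set across the components, so containing each local progression in each local projection does not mean containing their CRT recombination. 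The paper's resolution is Bourgain's gluing/entropy method (the ``Amplification'' and ``Decompositions'' sections): one builds a measure $\mu=(\mu_A^{\otimes r_1})^{\oplus r_2}$, constructs for each time $s$ a good/junk decomposition $\cO/\fa=B_i^{(s)}\sqcup C_i^{(s)}$ so that the \emph{conditional} distributions of $\mu$ given the history modulo $\cQ_1\cdots\cQ_s$ still satisfy non-concentration at the next place, and then iterates the Fourier decay theorem (Theorem \ref{exp1}) to show the relative entropies along the filtration are nearly maximal, forcing the support of the pushforward to $\cO/\fa'$ to be almost all of $\cO/\fa'$. Your proposal contains no mechanism for controlling these conditional distributions, which is where the whole difficulty lives.

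A second, independent gap: you cannot apply Theorem \ref{1053} directly, because its hypothesis $\pi_{\cP^2}(A)=\cO/\cP^2$ is not implied by a density lower bound and is unstable under removing a few elements (the paper remarks on exactly this). Your suggestion that concentration ``forces a canonical descent to a quotient $\cO/\cP^m$'' is also too optimistic: the obstruction is concentration in an affine translate $a+bR$ of a \emph{subring}, and subrings of $\cO/\cP^n$ are genuinely complicated objects; taming them requires the structure theorem of Salehi-Golsefidy (Theorem \ref{structure}), the notion of congruential rings, a minimality argument selecting the translate $a_0+b_0R_0$ where $A$ concentrates, and a Goursat-type decomposition to reduce to ideals whose prime factors have coprime norms. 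This reduction occupies an entire section of the paper and replaces the surjectivity hypothesis of Theorem \ref{1053} with the non-concentration condition \eqref{2226}, which is the form that survives the Fourier and gluing arguments.
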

\noindent Here, $\pi_\fa \text{ (resp. } \pi_{\fa'})$ is the reduction map $\cO\rightarrow \cO/\fa \text{ (resp. }\cO\rightarrow \cO/\fa' )$, the set $A^{C_2}=\{x_1\cdots x_{C_2}: x_i\in A, 1\leq i\leq C_2 \}$ is the $C_2$-fold product of the set $A$, and the set $\sum_{C_3}A^{C_2}=\{x_1+\cdots+x_{C_3}: x_i\in A^{C_2}, 1\leq i\leq C_3\}$ is the $C_3$-fold sum of the set $A^{C_2}$. Roughly speaking, Conjecture \ref{BGC} is a quantitative version of the statement that given a set $A\subset \mathcal O$ and an ideal $\fa\subset \mathcal O$ such that if $|\pi_{\fa}(A)|$ is not too small, then modulo an ideal $\fa'$ comparable to $\fa$, a sum-product set of $A$ contains a thick arithmetic progression.  \par
Salehi-Golsefidy made Conjecture \ref{BGC} for the purpose of proving the following super approximation conjecture in full generality \cite{BV12}: 

\begin{conj}[Question 2, \cite{GV12}] \label{SAC}Let $\Gamma<\text{SL}_n(\mathbb Z)$ be finitely generated, then $\Gamma$ has the super approximation property with respect to all positive integers if and only if the identity component $\mathbb G_0$ of the Zariski closure $\mathbb G$ of $\Gamma$ is perfect, i.e. $[\mathbb G_0, \mathbb G_0]=\mathbb G_0$.
\end{conj}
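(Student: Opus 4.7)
The plan is to treat the two directions of the equivalence separately. The ``only if'' direction is the easier one: if the family $\{\pi_q(\Gamma)\}_q$ admits a uniform spectral gap and $\mathbb G_0$ were not perfect, then $\mathbb G_0$ would admit a nontrivial abelian quotient, which (after choosing sufficiently divisible $q$) would produce quotients of $\Gamma$ mapping onto arbitrarily large cyclic groups.  Cayley graphs of large cyclic groups with bounded generating sets cannot expand, so no uniform spectral gap can persist, a contradiction.

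For the ``if'' direction I would follow the Bourgain--Gamburd--Varj\'u--Salehi-Golsefidy program.  First, Chinese remainder together with strong approximation for perfect $\mathbb G_0$ allow one to reduce the claim to a uniform spectral gap for the Cayley graphs $\text{Cay}(\pi_{\cP^n}(\Gamma),\pi_{\cP^n}(S))$ as $\cP$ ranges over prime ideals in a ring of integers $\cO$ associated to $\mathbb G$ and $n\geq 1$.  For each such modulus I would set up the standard $L^2$-flattening machinery: convolve the random walk measure $\mu=\frac{1}{|S|}\sum_{s\in S}\delta_s$ with itself many times and show that either the $\ell^2$-norm drops at a geometric rate (giving the spectral gap) or else $\mu^{\ast k}$ concentrates on an approximate subgroup $H$ of $\pi_{\cP^n}(\Gamma)$.

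The decisive step is to rule out the second alternative.  Here I would apply this paper's bounded generation theorem (the resolution of Conjecture \ref{BGC}) to a suitable matrix coordinate of elements in $H$, together with the sum-product input of Theorem \ref{1053}, to produce inside a quotient $\cO/\fa'$ with $\fa'$ comparable to $\cP^n$ a thick arithmetic progression coming from the entries of elements of $H$.  Using perfectness of $\mathbb G_0$, one can then form iterated commutators in $H$ to propagate this progression from a single coordinate throughout the matrix; a quantitative strong approximation argument then shows that such an $H$ would have to exhaust $\pi_{\cP^n}(\mathbb G_0(\cO))$, contradicting the concentration hypothesis and yielding the sought spectral gap.

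The hardest point, I expect, is precisely this last interface: converting the ring-level arithmetic progression given by Conjecture \ref{BGC} into a bona fide group-theoretic obstruction requires a uniform escape-from-proper-subgroups statement inside $\pi_{\cP^n}(\mathbb G)$, with constants depending only on the rank of $\mathbb G$ and on $[\cO:\mathbb Z]$ rather than on $\cP$ or $n$.  A secondary difficulty is assembling the per-prime-power spectral gaps into a spectral gap for arbitrary composite moduli without losing uniformity; this is handled by now-standard CRT and tensor-product arguments, but requires additional care when the moduli involve many ramified primes of small residue norm, which is where the uniformity in $\cP$ furnished by Theorem \ref{1053} plays a crucial role.
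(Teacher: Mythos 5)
There is a fundamental mismatch here: the statement you are trying to prove, Conjecture \ref{SAC}, is an \emph{open conjecture}, and this paper neither proves it nor claims to. What the paper proves is Theorem \ref{mainthm}, i.e.\ the bounded generation statement of Conjecture \ref{BGC}, which Salehi-Golsefidy proposed as an \emph{ingredient} toward Conjecture \ref{SAC}; even with that ingredient in hand, the authors obtain only a single new case of super approximation, namely Zariski-dense subgroups of ${\normalfont \text{SL}}_2\times{\normalfont \text{SL}}_2$ (Theorem \ref{main}), and that is carried out in a separate paper \cite{TZ23b}, not here. So there is no ``paper proof'' of the statement for your sketch to be compared against, and your sketch should not be presented as a proof of the conjecture.

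Concerning the content of your outline: the ``only if'' direction (perfectness is necessary) is indeed the easy, known part, but every genuinely hard point of the ``if'' direction is exactly where the conjecture remains open, and your sketch passes over these as if they were routine. The reduction to prime-power moduli via CRT and strong approximation, the $L^2$-flattening dichotomy, and above all the step of ruling out concentration on approximate subgroups of $\pi_{\cP^n}(\Gamma)$ uniformly in both $\cP$ and $n$ are not available for a general perfect $\mathbb G_0$ (which may have a nontrivial unipotent radical and need not be semisimple); the existing results of Bourgain--Gamburd \cite{BG09} and Salehi-Golsefidy \cite{SG19,SG20} handle only restricted moduli or restricted groups. Likewise, the ``interface'' you flag --- converting the ring-level arithmetic progression produced by Conjecture \ref{BGC} into a group-theoretic non-concentration statement via commutators and escape from subgroups --- is precisely the difficult work that \cite{TZ23b} carries out, and only for ${\normalfont \text{SL}}_2\times{\normalfont \text{SL}}_2$; asserting that it goes through for arbitrary perfect $\mathbb G_0$ with uniform constants is an unsupported leap, not a proof step. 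In short, your proposal is a reasonable description of the Bourgain--Gamburd--Varj\'u--Salehi-Golsefidy program and of where Theorem \ref{mainthm} is expected to enter, but it is a research program, not an argument, and the gaps it leaves open are the open problem itself.
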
 

Although Conjecture \ref{SAC} is formulated in $\mathbb Z$, algebraic integers are involved because in many previous works on Conjecture \ref{SAC}, one needs to diagonalize matrices at certain stage, which in general is only possible in an extended field.

Our main theorem in this paper is 
\begin{theorem}\label{mainthm} Conjecture \ref{BGC} is true. 
\end{theorem}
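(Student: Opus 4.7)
My plan is to reduce the general-ideal case to the prime-power case via the Chinese Remainder Theorem, apply the local sum-product theorem of Salehi-Golsefidy (Theorem \ref{1053}) on each relevant component, and assemble the local outputs into a single global arithmetic progression using Fourier inversion driven by new additive character sum bounds over $\cO/\cP^n$. This follows the philosophy of Bourgain's treatment of $\mathbb{Z}/q\mathbb{Z}$ in \cite{Bou08}, but handling a general ring of integers and a general ideal requires several new ingredients.

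First, I would factor $\fa = \prod_{i=1}^k \cP_i^{n_i}$ and use CRT to identify $\cO/\fa \simeq \prod_i \cO/\cP_i^{n_i}$; write $A_i$ for the image of $A$ in the $i$-th factor, so that by hypothesis $\prod_i |A_i| \geq \prod_i N(\cP_i^{n_i})^\delta$. A pigeonhole step selects a subset $I$ of indices on which every retained factor $A_i$ has size at least $N(\cP_i^{n_i})^{\delta'}$ for some $\delta'$ comparable to $\delta$, while the discarded factors are absorbed into the freedom to pass from $\fa$ to a coarser ideal $\fa' \supseteq \fa^{C_1}$. On each retained component, Theorem \ref{1053} (applied to a bounded iterated sum-product set of $A$) produces an ideal $\cP_i^{m_i}$, an element $a_i \in \cO$, and a containment $\pi_{\cP_i^{m_i}}(\mathbb{Z}a_i) \subseteq \pi_{\cP_i^{m_i}}\!\left(\sum_{C_3} A^{C_2} - \sum_{C_3} A^{C_2}\right)$ with $|\pi_{\cP_i^{m_i}}(\mathbb{Z}a_i)| \geq N(\cP_i^{n_i})^\epsilon$. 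The uniformity of Theorem \ref{1053} in $n$ is essential, so that $C_2$ and $C_3$ can be chosen independently of the individual $n_i$ and of the number of primes $k$.

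The remaining task is to glue these local progressions into a single arithmetic progression modulo $\fa' := \prod_{i \in I} \cP_i^{m_i}$ sitting inside a single iterated sum-product set of $A$. The natural route is Fourier inversion on $\cO/\fa'$: a set $B \subseteq \cO/\fa'$ contains a translate of a long arithmetic progression whenever $|\sum_{x \in B} \psi(x)|$ is much smaller than $|B|$ for every nontrivial additive character $\psi$ of $\cO/\fa'$. By CRT any such character factors into prime-power components, so everything reduces to bounds of the form $|\sum_{x \in B} \psi(x)| \ll |B|\, N(\cP^n)^{-\eta}$ for every nontrivial additive character $\psi$ of $\cO/\cP^n$ and every sufficiently iterated sum-product set $B$ of $A$. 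These are precisely the new character-sum estimates promised in the abstract.

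The main obstacle I anticipate is proving these character-sum bounds on $\cO/\cP^n$. The natural strategy is an induction on $n$ that lifts Weil-type cancellation from the residue field $\cO/\cP$ upward through the filtration by powers of $\cP$, using Theorem \ref{1053} at each stage to beat the trivial bound on the smaller quotient. The delicate point is to keep the decay exponent $\eta$ and the required iteration depth uniform in both $n$ and in the residue degree of $\cP$, as demanded by the conclusion that $C_1, C_2, C_3, \epsilon$ depend only on $\delta$ and $d = [K:\mathbb{Q}]$. A secondary, but genuine, difficulty is to guarantee that a single bounded iterated sum-product set of $A$ realizes expansion simultaneously across all retained components indexed by $I$; once again, the uniformity built into Theorem \ref{1053} is what makes such a simultaneous choice possible.
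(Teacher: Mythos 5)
Your high-level philosophy (localize, expand locally, glue) matches the paper's, but three of your concrete steps have genuine gaps. First, you cannot apply Theorem \ref{1053} directly to the local components $A_i$: its hypotheses include $\pi_{\cP^2}(A)=\cO/\cP^2$ and a density lower bound at \emph{every} level $m\geq n\epsilon$, neither of which follows from $|A_i|\geq N(\cP_i^{n_i})^{\delta'}$ --- for instance $A_i$ could lie entirely in a coset of a proper subring. The paper spends most of its effort replacing Theorem \ref{1053} by a sum-product theorem (Theorem \ref{sumproduct}) whose hypothesis is a robust non-concentration condition on congruential subrings, and reducing a general $A$ to one satisfying that condition by passing to a minimal concentrating coset $a_0+b_0R_0$ and invoking the structure theorem for subrings of $\cO/\cP^n$ (Theorem \ref{structure}). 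Second, the character-sum bounds cannot come from ``lifting Weil-type cancellation through the filtration'': the sets involved are arbitrary sets of density $q^{\delta}$, not algebraic varieties, and the actual mechanism behind Theorem \ref{exp1} is the Bourgain--Glibichuk--Konyagin route --- an $L^2$-flattening iteration in which failure of Fourier decay produces, via Balog--Szemer\'edi--Gowers, a set with small sum-product expansion contradicting Theorem \ref{sumproduct}.

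Third, and most seriously, the gluing step does not reduce to ``each character factors over CRT.'' A bounded sum-product set of $A$ in $\cO/\fa'$ is not a product set over the primes, so the character sum over it does not factor; and even at the level of measures, the local Fourier bound at $\cQ_{s+1}$ requires the non-concentration hypothesis to hold for the \emph{conditional} measure given the projection to $\cQ_1\cdots\cQ_s$, which can fail even when it holds unconditionally. The paper's gluing is instead a martingale/entropy argument: for each stage $s$ of the filtration $\{\cO/\prod_{j\leq s}\cQ_j\}$ it constructs a decomposition of $\cO/\fa$ into a good set and a small junk set so that conditional non-concentration holds on the good part, then shows via Theorem \ref{exp1} that the relative entropies $\int F_{s+1}\log^+(F_{s+1}/F_s)\,d\nu_{s+1}$ are all small, forcing the support of the pushed-forward measure to be nearly all of $\cO/\prod_{j\leq T}\cQ_j$; only at the very end does a single Fourier-inversion lemma (Corollary \ref{2346}) convert this near-full set into a large ideal. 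Without that conditional decomposition your Fourier inversion has nothing to feed on, and the construction of the decomposition (together with the choice of which primes to retain) is the technical heart of the proof that your outline omits.
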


With Theorem \ref{mainthm} at hand, we manage to prove a new case of Conjecture \ref{SAC} in a separate paper \cite{TZ23b}: 

\begin{theorem}\label{main}
Let $S\subset  {\normalfont \text{SL}}_2(\mathbb Z)\times {\normalfont \text{SL}}_2(\mathbb Z)$ be a finite symmetric set, and assume that it generates a group $G$ which is Zariski-dense in $ {\normalfont \text{SL}}_2\times  {\normalfont \text{SL}}_2$, then $G$ has the super approximation property with respect to all positive integers. \end{theorem}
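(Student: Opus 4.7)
The plan is to prove the super approximation property of $G\subset \mathrm{SL}_2(\mathbb Z)\times\mathrm{SL}_2(\mathbb Z)$ by running the Bourgain--Gamburd machine on the family of Cayley graphs of $\pi_q(G)$ with respect to $\pi_q(S)$ as $q$ ranges over all positive integers, using Theorem \ref{mainthm} to supply the $L^2$-flattening step at arbitrary composite moduli. Strong approximation for Zariski-dense subgroups of semisimple groups (due to Nori and Weisfeiler, with quantitative refinements in the style of Matthews--Vaserstein--Weisfeiler) shows that $\pi_q(G)$ is, up to a subgroup of uniformly bounded index, equal to $\mathrm{SL}_2(\mathbb Z/q)\times \mathrm{SL}_2(\mathbb Z/q)$ for all $q$ coprime to a fixed finite set of bad primes; the finitely many remaining moduli can then be treated as a separate, trivial case.

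First I would put in place the two standard inputs of the Bourgain--Gamburd machine other than $L^2$-flattening: quasirandomness of the quotient and non-concentration in proper algebraic subgroups. Quasirandomness of $\mathrm{SL}_2(\mathbb Z/q)\times \mathrm{SL}_2(\mathbb Z/q)$ in the sense of Gowers follows from representation-theoretic lower bounds on the nontrivial irreducibles of $\mathrm{SL}_2(\mathbb Z/p^n)$ combined with a CRT factorization. Non-concentration (``escape from subvarieties'') is a consequence of the Zariski density of $G$ in the product $\mathrm{SL}_2\times\mathrm{SL}_2$: one shows that the random walk driven by $S$ spends exponentially little mass on any proper algebraic subgroup and its cosets, including diagonal copies of $\mathrm{SL}_2$, Borels, and tori in either factor, together with their reductions modulo $q$.

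The $L^2$-flattening step is where Theorem \ref{mainthm} plays the central role. After CRT one tries to reduce to prime-power moduli $p^n$, but the coupling across different components cannot be ignored once one descends from the group level to the trace/commutator level. Diagonalizing sufficiently generic elements of the random walk places the relevant trace data in the ring of integers $\mathcal O$ of a number field of degree at most four, and the question of whether a convolved measure $\nu\ast\nu$ is significantly flatter than $\nu$ reduces, after Balog--Szemer\'edi--Gowers and product-theorem manipulations, to the following question: if $A\subset \mathcal O$ satisfies $|\pi_{\mathfrak a}(A)|\geq |\pi_{\mathfrak a}(\mathcal O)|^{\delta}$ for some ideal $\mathfrak a$, does a bounded sum-product set of $A$ contain a thick arithmetic progression modulo an ideal $\mathfrak a'$ commensurate with $\mathfrak a$? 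This is precisely Conjecture \ref{BGC}, now available as Theorem \ref{mainthm}. Feeding the resulting arithmetic progression back into the standard inductive flattening scheme closes the $L^2$-flattening loop uniformly in $q$.

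The main obstacle I expect is the trace-amplification and non-concentration analysis that is peculiar to the product group $\mathrm{SL}_2\times\mathrm{SL}_2$. For a single $\mathrm{SL}_2$ factor, the product theorem together with escape from Borels controls the multiplicities of large fibers of the trace map; in the product one must simultaneously track pairs of traces $(\mathrm{tr}\,g_1,\mathrm{tr}\,g_2)$ and rule out concentration on ``twisted diagonals'' such as graphs of field automorphisms or Galois twists of one projection against the other. Ruling these out will require combining the Zariski density of $G$ with an arithmetic rigidity argument that pins down the fields of definition of the two factors independently; once this is established, the standard trace-amplification inequality and the $L^2$-flattening supplied by Theorem \ref{mainthm} combine to yield a uniform spectral gap, and hence super approximation for all positive integer moduli.
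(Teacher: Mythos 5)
You should first note that this paper does not actually contain a proof of Theorem \ref{main}: it is announced as a consequence of Theorem \ref{mainthm} whose proof is given ``in a separate paper \cite{TZ23b}''. So the only thing to compare against here is the role the authors assign to Theorem \ref{mainthm}, namely as the arithmetic input (bounded generation over $\cO/\fa$ for a \emph{general} ideal $\fa$) feeding into a super approximation argument for $\mathrm{SL}_2\times\mathrm{SL}_2$ at all moduli. Your outline identifies that role correctly, but as a proof of Theorem \ref{main} it has a genuine gap: the entire passage from Theorem \ref{mainthm} to an $L^2$-flattening inequality for convolution powers of the $\pi_q(S)$-walk on $\mathrm{SL}_2(\mathbb Z/q)\times\mathrm{SL}_2(\mathbb Z/q)$, uniformly over arbitrary composite $q$, is disposed of in one sentence (``feeding the resulting arithmetic progression back into the standard inductive flattening scheme closes the loop''). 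For prime or square-free moduli that scheme is standard, but for general $q$ the obstruction to flattening is concentration of the measure on subgroups defined over subrings of $\cO/\fa$, and removing it requires the full multi-scale machinery (non-concentration hypotheses of the type \eqref{nonconcentration1}, congruential subrings, a filtration/gluing analysis redone at the group and Lie-algebra level) — this is precisely the content of the companion paper and of \cite{SG19}, not a routine bookkeeping step. You also do not address how the change of modulus built into Theorem \ref{mainthm} (it only produces a progression modulo some $\fa'$ with $\fa^{C_1}\subseteq\fa'$) is reconciled with a spectral gap statement at the original modulus $q$.

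Two further points in your sketch are inaccurate as stated. First, quasirandomness in Gowers' sense does \emph{not} follow for $\mathrm{SL}_2(\mathbb Z/p^n)$ from minimal-degree bounds: this group has nontrivial irreducible representations of dimension about $(p-1)/2$ (those factoring through $\mathrm{SL}_2(\mathbb Z/p)$), which is not a positive power of $|\mathrm{SL}_2(\mathbb Z/p^n)|\asymp p^{3n}$; the Bourgain--Gamburd argument at prime-power and composite moduli must instead exploit that representations nontrivial on the deepest congruence layer have large dimension and then bootstrap scale by scale, which again forces the multi-scale structure you are implicitly assuming away. Second, moduli divisible by the finitely many bad primes are part of ``all positive integers'' and are not a ``separate, trivial case'': a uniform gap must still be proved for them (the reduction is routine but is an argument, not a dismissal). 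In short, your proposal is a reasonable high-level description of the intended strategy and of where Theorem \ref{mainthm} enters, but the step it treats as standard is exactly the hard part, so it does not constitute a proof of Theorem \ref{main}.
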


In the paper \cite{Bou08} Bourgain provided a scheme of glueing local pieces of $\mathbb Z_q$.  In the proof of Theorem \ref{mainthm}, we adopt the scheme of Bourgain, but we also need to make significant changes at several places.  To prepare explaining some core ideas in the proof, we mention another closely related phenomenon to sum-product expansion, which is the Fourier decay.  \par

Using sum-product theorem in $\mathbb F_p$, Bourgain, Glibichuk and Konyagin proved that for any set $A\subset \mathbb F_p$ with $|A|>p^\delta, 0<\delta\leq 1$, there exists $k\in\mathbb Z_+$ and $\epsilon>0$ depending only on $\delta$ such that if we let $\mu$ be the uniform probability measure supported on $A$, then $\hat{\mu^{\otimes k}}(\xi)< p^{1-\epsilon}$ for any nonzero $\xi\in \hat{\mathbb F_p}$, where ${\mu^{\otimes k}}$ is the $k$-fold multiplicative convolution of $\mu$ \cite{BK06}.  This is one instance of Bourgain's brilliance of ``connecting analysis to combinatorics'', and the same strategy has been applied to Bourgain's several other papers, for instance \cite{BC06}, \cite{Bou08}. \par
 To see how in reverse order Fourier decay implies sum-product expansion (which turns out to be important for collecting some local information in the proof of Theorem \ref{mainthm}),  we take $\mu^{(\otimes k)\oplus r}$ the $r$-fold additive convolution of $\mu^{\otimes k}$ for sufficiently large $r$, then every nonzero Fourier coefficient of $\mu^{(\otimes k)\oplus r}$ is very small. An application of Plancherel theorem implies that the support of $\mu^{(\otimes k)\oplus r}$, which is exactly a sum-product set of $A$, must be very large.  This will eventually {imply} bounded generation and sum-product. \par

Now we give a sketch and some core ideas of the proof of Theorem \ref{mainthm}.  Let $A$ be given as in Conjecture \ref{BGC}.  Up to passing to a related set, we can assume $A$ consists of invertible elements, and  $A$ is not concentrated in any unital subring of $\langle A\rangle$ (the ring generated by $A$) and its affine translates.  A general subring of $\cO/\fa$ can be very complicated, but a theorem of Salehi-Golsefidy (Theorem \ref{structure}) says that for $n$ sufficiently large for any unital subring $R$ of $\cO/\cP^n$, there is an integer $m>cn$ for some $0<c<1$ independent of $\cP$ such that $R(\mod \cP^m)$ is very close to $\cO_0(\mod \cP^m)$ for $\cO_0$ the ring of integers of a subfield of $K$.  For the purpose of proving Theorem \ref{main}, it suffices to pass the modulus $\cP^n$ to $\cP^m$.  Goursat's lemma then allows us to treat a general ideal $\fa$.  \par

Motivated by Theorem \ref{structure}, we define a class of subrings in $\cO/\cP^n$ of a simple form, and we call these rings \emph{congruential} rings. For $m\leq n$, let $\pi_{n,m}:\mathcal O/\mathcal P^n\rightarrow\mathcal O/\mathcal P^m$ be the canonical projection. We abbreviate $\pi_{n,m}$ to $\pi_m$ if the initial ring $\cO/\cP^n$ is clear in the context.  We say a unital ring $R<\cO/\cP^n$ is {congruential} if there exists a number field $K'<K$ with ring of integers $\cO'$, and a positive integer $m\leq n$ such that $R=R_{\cO',m}$, where
\begin{align}\label{1329}
R_{\cO',m}=\pi_{n,m}^{-1}(\cO'(\mod \cP^m)).
\end{align}

On the first stage of the proof of Theorem \ref{mainthm}, we need to develop some character sum estimate analogous to the result of Bourgain, Glibichuk and Konyagin in the prime field case.  To explain our results, we first introduce some notations.  As an additive group, $\cO/\cP^n$ has a dual group $\hat{\cO/\cP^n}$ consisting of all additive characters of $\cO/\cP^n$. The dual group $\hat{\cO/\cP^n}$ is naturally a $\cO/\cP^n$-module: For $y\in \cO/\cP^n$ and $\chi\in \hat{\cO/\cP^n}$, let $y\chi$ be the character such that $y\chi(x)=\chi(xy)$.  An additive character $\chi\in\hat{\mathcal O/\mathcal P^n}$ is \emph{primitive} if $\chi$ is not induced by a character in $\mathcal O/\mathcal P^m$ by the pullback of $\pi_{n,m}$.  \par \par
Let $\mu$ be a probability measure on $\mathcal O/\mathcal P^n$, the Fourier transform $\hat{\mu}(\chi)$ is defined as 
$$\hat{\mu}(\chi)=\sum_{x\in\mathcal O/\mathcal P^n}\mu(x)\chi(x).$$

For two measures $\mu, \nu$ on $\mathcal O/\mathcal P^n$, the 
additive convolution is
$$\mu*\nu(x)=\sum_{y\in \mathcal O/\mathcal P^n}\sum_{z\in\mathcal O/\mathcal P^n}\mu(y)\nu(z)\boldsymbol{1}\{y+z=x\},$$
and the multiplicative convolution is
$$\mu\otimes\nu(x)=\sum_{y\in \mathcal O/\mathcal P^n}\sum_{z\in\mathcal O/\mathcal P^n}\mu(y)\nu(z)\boldsymbol{1}\{yz=x\}.$$

\par

We also denote by $R^*$ the multiplicative group of invertible elements in $R$, and $\hat{R}$ the additive dual of $R$. \par

 We have 

\begin{thm}\label{exp1} Suppose $d$ is a positive integer.  For all $\gamma>0$ there exist $\epsilon=\epsilon(\gamma, d)>0, \tau=\tau(\gamma, d)>0$, $k=k(\gamma, d)\in\mathbb Z_+$ and $N=N(\gamma, d)$ such that the following holds:  Let $K$ be a number field with field extension degree over $\mathbb Q$ at most $d$, $\mathcal O$ be the ring of integers of $K$ and $\mathcal P$ a prime ideal of $\mathcal O$. Assume $|\mathcal O/\mathcal P^n|>N$.  Let $\mu_i, 1\leq i\leq k$ be probability measures on $\mathcal O/\mathcal P^n$. Assume all $\mu_i$ satisfy the following non-concentration condition: \par
\noindent For all congruential subrings $R<\cO/\cP^n, a, b\in \cO/\cP^n$, with 
$$[\cO/\cP^n: bR]>|\cO/\cP^n|^\epsilon,$$
we have 
\begin{align}\label{nonconcentration1}
\mu_i(a+bR )<[\cO/\cP^n:  bR]^{-\gamma},
\end{align}
where $[\cO/\cP^n:  bR]$ is the index of $bR$ in $\cO/\cP^n$ as an abelian subgroup. 
Then, for any primitive additive character $\chi$ on $\mathcal O/\mathcal P^n$, we have 
\begin{align}
|\reallywidehat{\mu_1\otimes\mu_2\otimes\cdots \otimes\mu_k}(\chi)| <|\cO/\cP^n|^{-\tau}.
\end{align}
\end{thm}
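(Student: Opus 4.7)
The plan is to transplant the Bourgain--Glibichuk--Konyagin scheme for exponential-sum decay over $\mathbb F_p$ \cite{BK06}, with the uniform Salehi-Golsefidy sum-product theorem (Theorem~\ref{1053}) replacing Bourgain--Katz--Tao and the congruential non-concentration condition~\eqref{nonconcentration1} replacing the cardinality hypothesis $|A| \geq p^\delta$. I would proceed by contradiction: suppose $|\widehat{\mu_1 \otimes \cdots \otimes \mu_k}(\chi)| \geq |\cO/\cP^n|^{-\tau}$ for some primitive character $\chi$, and try to derive a violation of the non-concentration hypothesis on one of the $\mu_i$.

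The core identity is
\[
\widehat{\mu_i \otimes \nu}(\chi) \;=\; \sum_{y \in \cO/\cP^n} \nu(y)\,\hat{\mu_i}(y\chi),
\]
which says that a multiplicative convolution averages $\hat{\mu_i}$ along the $\nu$-weighted orbit $\{y\chi\}$. Cauchy--Schwarz converts a lower bound on the left into the statement that $\nu$-mass concentrates on the pullback, under $y\mapsto y\chi$, of the large spectrum $S_\tau(\mu_i):=\{\eta: |\hat{\mu_i}(\eta)|>|\cO/\cP^n|^{-\tau}\}$; by Parseval, $|S_\tau(\mu_i)| \leq |\cO/\cP^n|^{2\tau}$. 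Primitivity of $\chi$ is what makes $y\mapsto y\chi$ a bijection $\cO/\cP^n\to\hat{\cO/\cP^n}$, so $\chi^{-1}S_\tau(\mu_i)$ is a small subset of $\cO/\cP^n$ carrying substantial $\nu$-mass. Feeding this into Theorem~\ref{1053} in contrapositive form, its smallness forces it to be contained in some coset $a+bR$ with $R$ congruential and $[\cO/\cP^n:bR]$ large---but that is exactly the configuration ruled out for $\nu=\mu_{i+1}$ by~\eqref{nonconcentration1}. With $\tau, k, \epsilon, N$ chosen compatibly, iterating the step across the $k$ convolutions collapses $\|\widehat{\mu_1\otimes\cdots\otimes\mu_k}\|_\infty$ below $|\cO/\cP^n|^{-\tau}$, yielding the claim.

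The main obstacle, and the place where the argument must go beyond a mechanical transcription from $\mathbb F_p$, is the filtered structure of $\cO/\cP^n$. A character $\chi$ that is primitive at the top level $n$ can still interact nontrivially with the layers $\cP^m$, and the large-spectrum set $S_\tau(\mu_i)$ may, a priori, be supported essentially on an affine coset of some ideal $\cP^m$ or of some exotic subring. The correct category of structured sets that absorbs the large spectra is precisely the class of cosets $a+bR$ with $R$ congruential---this is why Theorem~\ref{structure} appears in the paper's outline: it ensures that an arbitrary would-be ``approximate subring'' is congruential on a large quotient, exactly matching the class appearing in~\eqref{nonconcentration1}. Arranging the quantitative bookkeeping so that primitivity of $\chi$ is preserved through the iteration, and so that the sum-product contrapositive at each stage always lands inside the congruential category, uniformly in $n$ and in $[K:\mathbb Q]\leq d$, is the crux of the technical work.
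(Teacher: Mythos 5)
Your scheme breaks down quantitatively at the very first step. For a probability measure $\mu_i$ satisfying only \eqref{nonconcentration1}, Parseval gives $\sum_{\eta}|\hat{\mu_i}(\eta)|^2=q\sum_x\mu_i(x)^2\leq q\max_x\mu_i(x)<q^{1-\gamma}$, so the large spectrum $S_\tau(\mu_i)$ satisfies only $|S_\tau(\mu_i)|\leq q^{1-\gamma+2\tau}$, not $|S_\tau(\mu_i)|\leq q^{2\tau}$. Indeed $S_\tau(\mu_i)$ can be essentially all of $\hat{\cO/\cP^n}$: if $\mu_i$ is uniform on a well-spread set of size about $q^{\gamma}$, a positive proportion of its Fourier coefficients have modulus near $q^{-\gamma/2}\gg q^{-\tau}$. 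Consequently a single Cauchy--Schwarz does not hand you a \emph{small} structured set carrying $\mu_{i+1}$-mass, and no one-step contradiction with \eqref{nonconcentration1} is available. This is exactly why the paper's argument is an iteration on the flatness exponent $\delta_{k,r}$ defined by $\sum_\xi|\hat{\nu}_k(\xi)|^{4r}=q^{1-\delta_{k,r}}$: Proposition \ref{1536} (fed by Proposition \ref{1441}) shows each doubling $k\mapsto 2k$, $r\mapsto\bar r$ gains $\tau_0/3$ in $\delta$, about $3/\tau_0$ doublings are needed before the spectrum is genuinely small, and only then does a final H\"older step yield the pointwise bound \eqref{B7.75}.

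The structural step is also unsupported as stated. The contrapositive of Theorem \ref{1053} says only that a non-growing set violates one of its hypotheses (too large, fails to cover $\cO/\cP^2$, or thin at some level $\cP^m$); it does not say the set lies in a coset $a+bR$ of a congruential ring, and a small set carrying mass certainly need not lie in such a coset. To get from ``a statistically large fraction of products and differences return to the large spectrum'' to an actual approximate subring one needs Balog--Szemer\'edi--Gowers twice (multiplicatively and then additively, as in Proposition \ref{1441}) to extract a set $\bar S\subset(\cO/\cP^n)^*$ with $|\bar S+\bar S|+|\bar S\cdot\bar S|<q^{C\epsilon}|\bar S|$, and then the full sum-product Theorem \ref{sumproduct} --- whose proof combines Theorems \ref{structure}, \ref{BKT04} and \ref{1053} and occupies Section 2 --- to contradict non-concentration. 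Finally, your outline omits two reductions without which the iteration cannot be set up: distinct measures $\mu_i$ are reduced to a single measure via the symmetric-polynomial identity (Lemma \ref{polyiden}), and mass on non-units --- where $y\mapsto y\chi$ is far from injective and multiplicative convolution degenerates --- is handled by decomposing along the valuation sets $R_v$ of \eqref{1129} and pushing forward to $\cO/\cP^{n'}$.
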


%
%

Theorem \ref{exp1} is a generalization of Theorem 2 in \cite{Bou08}, where Bourgain proved the case $\mathcal O=\mathbb Z$ and each $\mu_i$ is the counting measure of a set $A_i\subset \mathbb Z/(p^n)$. Write $\cO/\cP=\mathbb F_{p^l}$.  If $l=1$, the analysis is the same as the case $\mathbb Z/(p^n)$.  If $l>1$, a main challenge is to deal with subrings of $\cO/\cP^n$.  In the proof of Theorem \ref{exp1}, we need to develop a more measure theoretical friendly sum-product theorem over $\cO/\cP^n$ (Theorem \ref{sumproduct}).  Once Theorem \ref{sumproduct} is established, we then follow the procedure in \cite{Bou08} to pass Theorem \ref{sumproduct} to Theorem \ref{exp1}.  

We will then apply Theorem \ref{exp1} to a glueing process similar to Bourgain's treatment for $\mathbb Z/(q)$ (Part II, (3) of \cite{Bou08}) to conclude Theorem \ref{mainthm}.  More specifically, write $\fa=\prod_{i=1}^{s}\cP_i^{n_i}<\cO$ in prime ideal decomposition, and let $\mu_A$ be the probability counting measure of $A$ so $\mu_A$ satisfies the non-concentration property \eqref{nonconcentration1} with $\cP^n$ replaced by $\alpha$ and $\gamma$ replaced by a number depending on $\delta$.  This non-concentration property is preserved under additive and multiplicative convolution. Take $k$ the implied constant in Theorem \ref{exp1} and $r$ sufficiently large. Then $\mu=(\mu_A^{\otimes k})^{\oplus r}$ has significant Fourier decay.  We want to construct an ideal $\fa'\subset \fa$, rearranging indices, let's say, $\fa'=\prod_{i=1}^{s'}\cP_i^{n_i}$ for $s'\leq s$, such that $\mu$ enjoys a finer local non-concentration property with respect to the filtration $\{\cO/\prod_{i=1}^{j}\cP_i^{n_i}\}_{j=1}^{s'}$, i.e., given any time $1\leq j<{s'}$, for any $\xi\in \cO$ (up to an exceptional set $C_j$), the conditional probability measure $\mu\vert_{\{x\in\cO/\fa: x\equiv \xi(\mod\cO/\prod_{i=1}^{j}\cP_i^{n_i} )\}}$ (capturing elements who share the same history with $\xi$ up to time $j$), satisfies essentially the same non-concentration property at the next local place $\cO/\cP_{j+1}^{n_{j+1}}$ after a pushforward.  \par

At this moment it is worth mentioning that \eqref{nonconcentration1} in Theorem \ref{exp1} is a delicate assumption to find.  We need to be careful not to assume too little to draw any conclusion or too much (e.g., replace ``all congruential subrings" by ``all subrings''), { so that in the application to the filtration process we do not have too many conditions to check, and as a result we can push $s'$ large so that $|\cO/\fa'|>|\cO/\fa|^{\delta_0}$ for some $\delta_0>0$ depending only on $\delta$}. \footnote{In Bourgain's treatment of $\mathbb Z/q\mathbb Z$ \cite{Bou08}, he assumes $|A+A|$ is not much larger than $|A|$, from which he can push $s'$ to the extend that $|\cO/\fa'|>|\cO/\fa|^{1-\delta_0}$ for some small $\delta_0$.  We do not make this assumption. It also seems impossible to find a uniform exceptional set $C=C_j$ for all $j$ as claimed in \cite{Bou08}, but it didn't affect the analysis.} Then if we let $(\mathbb E_j)_{1\leq j\leq s'}$ be the conditional expectation operator associated to this filtration, we can iteratively apply Theorem \ref{exp1} to show that the entropy of $\mathbb E_1(\mu)$, as well as the relative entropies of $\mathbb E_{j+1}(\mu)$ with respect to $\mathbb E_{j}(\mu)$, $1\leq j<s'-1$, are close to those of the uniform distribution measure.  This implies the support of $\mathbb E_{s'}(\mu)$ must be a very large set in $\cO/\fa'$, a further sum-product of which gives Theorem \ref{mainthm}.  \par
 
The techniques in this paper are in great debt to \cite{Bou08}.  We also use two theorems from \cite{SG20} in an essential way. A contribution from this paper is that we can combine Bourgain's analysis in \cite{Bou08} and analysis of subrings of $\cO/\fa$, which takes place throughout this paper and was previously a well known difficulty.

\noindent {\bf Acknowledgements} \par
We thank Alex Kontorovich for discussions during the production of this work, and Zeev Rudnick, Nicolas de Saxc\'e, He Weikun and Tran Chieu Minh for {numerous} comments/suggestions on previous versions of this paper.

\newpage
\section{Sum-product Preliminaries}
The goal of this section is to prove Theorem \ref{sumproduct} below, which is a crucial ingredient in the proof of Theorem \ref{exp1} in the following sections.

\begin{theorem}[Sum-product Theorem]\label{sumproduct}
Suppose $d$ is a positive integer.  Given $\delta_1,\delta_2>0$, there exists $\epsilon=\epsilon(\delta_1, \delta_2)$, and $\delta_3=\delta(\delta_1, \delta_2, \epsilon, d)>0$ such that the following holds { for sufficiently large $n$}:  { Let $K$ be a number field with $[K: \mathbb Q]\leq d$}.  Let $\mathcal O$ be the ring of integers of $K$ and $\mathcal P$ is a prime ideal of $\cO$. Write $\cO/\cP=\mathbb F_{p^{d_0}}$ for some $d_0\leq d$, and $q=|\cO/\cP^n|$. Suppose $A\subset \mathcal O/\mathcal P^n$ such that
\begin{align}\label{1020}
q^{\delta_1}<|A|<q^{1-\delta_1},
\end{align}
and for all congruential subrings $R_1<\cO/\cP^n, a, b\in \cO/\cP^n$, with 
$$[\cO/\cP^n: bR_1]>q^\epsilon,$$
we have
\begin{align}\label{2226}
|A\cap (a+b\subringone)|<[\mathcal O/\mathcal P^n: b\subringone]^{-\delta_2}|A|.
\end{align}
Then 
\begin{align}
\label{expansion}
|A+A|+|A\cdot A|>|A|^{1+\delta_3}.
\end{align}
\end{theorem}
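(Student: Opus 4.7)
The plan is to deduce Theorem \ref{sumproduct} from the Salehi--Golsefidy uniform sum-product theorem, Theorem \ref{1053}, by upgrading the non-concentration hypothesis using the subring structure result, Theorem \ref{structure}. Theorem \ref{1053} provides sum-product expansion in $\cO/\cP^n$ under a non-concentration hypothesis quantified over $b$-dilates of \emph{arbitrary} proper subrings. Our hypothesis \eqref{2226}, by contrast, only quantifies over \emph{congruential} subrings, i.e. those of the specific form $R_{\cO',m}$ defined in \eqref{1329}. The task is therefore to verify that this ostensibly weaker hypothesis in fact suffices to preclude concentration of $A$ in cosets of $b$-dilates of all subrings of large index, and then to invoke Theorem \ref{1053}.

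The bridge is Theorem \ref{structure}. For $n$ large enough in terms of $d$, given any unital subring $R<\cO/\cP^n$, that result supplies a subfield $K'\subset K$ with ring of integers $\cO'$ and an integer $m>cn$ (for some $c=c(d)\in(0,1)$) such that $\pi_{n,m}(R)=\pi_{n,m}(\cO')$, so that $R\subset R_{\cO',m}$. Moreover the indices $[\cO/\cP^n:R_{\cO',m}]$ and $[\cO/\cP^n:R]$ are comparable up to a factor at most $q^{1-c}$. For any $a,b\in\cO/\cP^n$, the coset $a+bR$ is then contained in $a+bR_{\cO',m}$ with comparable index, so concentration of $A$ in $a+bR$ forces concentration in $a+bR_{\cO',m}$. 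Provided $\epsilon$ is chosen small in terms of $c$, this contradicts \eqref{2226} for all subring dilates $bR$ of index above the threshold needed by Theorem \ref{1053}.

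With the non-concentration of $A$ now verified against $b$-dilates of all proper subrings, Theorem \ref{1053} (combined with the size hypothesis \eqref{1020}) yields the desired expansion $|A+A|+|A\cdot A|\geq |A|^{1+\delta_3}$ with $\delta_3$ depending on $\delta_1,\delta_2,\epsilon,d$.

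The main obstacle lies in the second step, namely the careful tracking of the index loss incurred in passing from an arbitrary subring $R$ to its congruential envelope $R_{\cO',m}$ produced by Theorem \ref{structure}, and making sure that this loss is absorbed by a sufficiently small choice of $\epsilon$ in \eqref{2226}. A secondary subtlety is that Theorem \ref{structure} only applies for $n$ large in terms of $d$; the remaining bounded regime of $n$ falls under the ``$n$ sufficiently large'' proviso in the theorem's statement, or can be handled directly by a pigeonholing argument since the lattice of subrings is then of bounded complexity. Finally, one should verify that when $n$ is large but $|\cO/\cP^n|=p^{nd_0}$ is small (i.e. $p^{d_0}$ bounded), the threshold $q^\epsilon$ remains meaningful; this is automatic once $N=N(\delta_1,\delta_2,d)$ in the statement is chosen large enough.
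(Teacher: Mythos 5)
There is a genuine gap, and it sits at the heart of your plan: you have mischaracterized the hypotheses of Theorem \ref{1053}. That theorem is \emph{not} stated under a non-concentration hypothesis over dilates of arbitrary subrings; its key hypothesis is the surjectivity condition \eqref{618}, namely $\pi_{\cP^2}(A)=\cO/\cP^2$, together with the level-density condition $|\pi_{\cP^m}(A)|>|\cO/\cP^m|^{\delta}$ for $m\geq n\epsilon$. The level-density condition does follow from \eqref{2226} (take $R_1=\cO/\cP^n$, $b=\fp^m$), but the surjectivity condition \eqref{618} emphatically does not: a set satisfying \eqref{2226} can fail to meet even a single residue class mod $\cP^2$ (remove from a generic $A$ all elements in one class mod $\cP^2$ --- this is a negligible deletion that preserves \eqref{2226}), and more seriously $A$ may generate only a proper subring of $\cO/\cP^n$ of small index, about which \eqref{2226} says nothing. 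The paper's remark following Theorem \ref{1053} makes exactly this point: the raison d'\^etre of Theorem \ref{sumproduct} is that its hypothesis is robust where \eqref{618} is fragile. Consequently your proposed bridge via Theorem \ref{structure} --- upgrading non-concentration from congruential subrings to all subrings --- while a correct and useful observation (it is indeed why the congruential class suffices), does not connect the hypothesis of Theorem \ref{sumproduct} to the hypothesis of Theorem \ref{1053}.

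The missing work is precisely the bulk of the paper's proof: one argues by contradiction, first reducing to $A$ invertible, then extracting via Bourgain's Lemma (Theorem \ref{1443}) a large subset $A_1$ whose bounded sum-product sets stay small, and then showing by a level-by-level analysis (the level groups $D_i$, Theorem \ref{BKT04} applied in the residue field, Lemma \ref{0052}, Corollary \ref{324}, and the Case 1/Case 2 iteration) that a \emph{bounded sum-product set} of $A_1$ does surject onto $\cO/\cP^2$ --- or, failing that, generates a congruential subring whose level groups can be strictly enlarged finitely many times --- so that Theorem \ref{1053} becomes applicable to that derived set and forces growth contradicting \eqref{2004}. Two further omissions in your write-up, each repairable but necessary: the conclusion of Theorem \ref{1053} concerns $\sum_{12}A^7-\sum_{12}A^7$ rather than $|A+A|+|A\cdot A|$, so the contradiction scheme through Theorem \ref{1443} is not optional; and the reduction to invertible elements must be carried out first, since $A$ may concentrate on a nontrivial power of the uniformizer.
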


We need the following theorem { due} to Salehi-Golsefidy, which relates unital subrings of $\mathcal O/\cP^n$ to subfields of $K$ when $n$ is large.

\begin{thmx}[Salehi-Golsefidy, \cite{SG20}, Theorem 7]\label{structure}
\textit{Let $\mathcal O$ be the ring of integers of a finite extension $K$ of $\mathbb Q$.  Let $\mathcal P$ be a prime ideal of $\cO$, and $R$ be a unital subring of $\mathcal O/\mathcal P^n$.  Let $C$ be an integer which is at least 3.  Suppose $n$ is an integer and }
\begin{equation}\label{lowerbd}
n\gg_C[K:\mathbb Q].
\end{equation}
\textit{
  Then there are integers $a$ and $b$, and a subfield $K_0$ of $K$ such that 
\begin{align}
&b-a\gg_{C,[K:\mathbb Q]}n, \text{ and } n\geq b\geq Ca,\\
&\pi_{\cP^b}(\cO_0\cap \cP^a)=\pi_{\cP^b}(R\cap \cP^a), \text{ and }\pi_{\cP^b}(R)\subseteq\pi_{\cP^b}(\cO_0) \label{1116}
\end{align}
where $\cO_0$ is the ring of integers of $K_0$.}
\end{thmx}

\begin{remark}
Theorem \ref{structure} was stated in terms of the non-Archimidean completions $\mathbb Q_p$ and $\cO_\cP$ as Theorem 7 in \cite{SG20}, which directly implies Theorem \ref{structure} stated here by an application of Krasner's Lemma.  
\end{remark}

{

We explain how Theorem \ref{structure} is used in our paper.  Let $R$ be as given in Theorem \ref{structure}. For $i\leq [ \frac{n}{e}-1]$, we have a natural injection $$(p^i)\cap R (\mod (p^{i+1}) )\rightarrow \cO/(p): p^ix\mapsto  x(\mod p),$$
Define $D_i\subset \cO/(p)$, the $i$-th level group of $R_1$ to be the image of this map. We then have an ascending chain of $p$-groups: $D_0\subset D_1\subset \cdots \subset D_{[\frac{n}{e}]-1}$. \par

We first look at the situation when $D_i$ are constant. 
\begin{corollary}\label{324}
Let $R$ be an unital ring in $\cO/\cP^n$. There exists an absolute constant $M$ depending only on $[K:\mathbb Q]$ such that if $\frac{n}{e}\geq M$ and $D_0=D_1=\cdots =D_{[\frac{n}{e}]-1}$, then there is a ring of integers $\cO_0<\cO$ such that $R(\mod \mathcal P^n)=\cO_0(\mod \mathcal P^n)$.
\end{corollary}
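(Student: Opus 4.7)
The plan is to apply Theorem~\ref{structure} to $R$ with a constant $C$ chosen large in terms of $[K:\mathbb Q]$, extract a candidate ring of integers $\cO_0$ of a subfield $K_0 \leq K$, and then exploit the rigidity furnished by the hypothesis $D_0=\cdots=D_{[n/e]-1}$ to promote the mod-$\cP^b$ comparison of Theorem~\ref{structure} into an exact equality $R = \cO_0 \pmod{\cP^n}$.

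First I invoke Theorem~\ref{structure} with $C$ large, which is permitted once $M$ is chosen large enough. This produces $K_0$, $\cO_0$, and integers $0 \leq a < b \leq n$ with $b \geq Ca$, $b - a \gg_C n$, and $\pi_{\cP^b}(R) \subseteq \pi_{\cP^b}(\cO_0)$, $\pi_{\cP^b}(R \cap \cP^a) = \pi_{\cP^b}(\cO_0 \cap \cP^a)$. Set $R' := \cO_0 \pmod{\cP^n}$. Using the ideal identity $\cO_0 \cap \cP^{ei} = p^i \cO_0$, which follows from the multiplicativity of ramification indices in the tower $\cO_0 \subseteq \cO$ together with $p\cO = \cP^e$, one checks that $R'$ itself has constant level groups $D_i(R') = \pi_p(\cO_0)$ for every $i$. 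Now I choose $i^*$ with $a \leq ei^*$ and $e(i^*+1) \leq b$, which is available because $b - a \gg e$; since $R \cap (p)^{i^*} \subseteq R \cap \cP^a$ and $(p)^{i^*+1} \supseteq \cP^b$, the second equation from Theorem~\ref{structure} forces $D_{i^*}(R) = D_{i^*}(R') = \pi_p(\cO_0)$. Combined with the hypothesized constancy of the $D_i(R)$, this yields $D_i(R) = \pi_p(\cO_0)$ for every $i$, hence $|R| = |R'|$ by comparing successive quotients of the $(p)$-adic filtrations.

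It then suffices to prove $R' \subseteq R$, whence $R = R'$ by the size identity. For each $\alpha \in \cO_0$ I build inductively $r_k \in R$ with $r_k \equiv \alpha \pmod{\cP^{ek}}$ via a Hensel-style lifting. The base case uses $\pi_p(\alpha) \in \pi_p(\cO_0) = D_0(R)$. For the inductive step, using $\pi_{\cP^b}(R) \subseteq \pi_{\cP^b}(\cO_0)$ I pick $\tilde r_k \in \cO_0$ with $\tilde r_k \equiv r_k \pmod{\cP^b}$; the decomposition $\alpha - r_k = (\alpha - \tilde r_k) + (\tilde r_k - r_k)$ writes the residual $(\alpha - r_k)/p^k$ as the sum of an element of $\cO_0$ (coming from $\cO_0 \cap \cP^{ek} = p^k\cO_0$) and an element of $\cP^{b-ek}$, so its mod-$p$ reduction lies in $\pi_p(\cO_0) = D_0(R)$ provided $b \geq e(k+1)$. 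This furnishes a correction term $r'_k \in R$ and produces $r_{k+1} = r_k + p^k r'_k$. The main obstacle is continuing the induction past $k \approx b/e$ all the way up to $k = \lceil n/e \rceil$, since beyond that range Theorem~\ref{structure} no longer directly supplies the companion $\tilde r_k$; this is handled either by reapplying Theorem~\ref{structure} at finer precision (to the residual $\alpha - r$ against an updated choice of parameters) or by absorbing the remaining top layers through the constancy identity $R \cap (p)^j = p^j R$, and since $C$ was taken large, only $O_C(1)$ such iterations are required to reach precision $\cP^n$.
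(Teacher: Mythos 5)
Your setup is sound up to the point where Theorem~\ref{structure} and the constancy of the level groups give you $R\equiv\cO_0\pmod{p^{m}}$ for $m$ up to roughly $(b-a)/e$, together with the cardinality match $|R|=|\cO_0(\mod \cP^n)|$. The genuine gap is the passage from modulus $\cP^{b}$ to modulus $\cP^{n}$, which is exactly the content of the corollary: Theorem~\ref{structure} only guarantees some $b\le n$ with $b-a\gg n$, so $n-b$ can be a positive proportion of $n$, and your Hensel-type lifting advances one power of $p$ per step. Your two proposed remedies do not close this. ``Reapplying Theorem~\ref{structure} at finer precision'' is not available: the theorem takes a subring as input and returns \emph{some} pair $(a,b)$; you have no control forcing the new $b$ closer to $n$, and it does not apply to a residual element $\alpha-r_k$. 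The identity $R\cap(p)^j=p^jR$ (mod $p^{j+1}$) also does not help, because the obstruction at level $k$ is to show that $(\alpha-r_k)/p^k$ reduces into $D_k$, and $\alpha-r_k$ lies in neither $R$ nor $\cO_0$ a priori once the companion $\tilde r_k$ is lost. The claim that only $O_C(1)$ further iterations are needed is therefore unjustified.

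The missing idea is the paper's multiplicative bootstrap, iterated one level at a time from $m=b-a$ up to $n$. Suppose $R\equiv\cO_0\pmod{p^{m}}$ and, by constancy of level groups, $(p^{m})\cap R\equiv p^{m}\cO_0\pmod{p^{m+1}}$. If $R$ contained an element $x+p^{m}y$ with $x$ invertible in $\cO_0$ and $y\notin\cO_0$ modulo $p^{m+1}$, then for any nonzero $p^{m}y_0\in p^{m}\cO_0$ the element
\begin{equation*}
w=(x+p^{m}y)^{-1}\bigl(x+p^{m}(y+y_0)\bigr)\equiv 1+x^{-1}y_0p^{m}\pmod{p^{m+1}}
\end{equation*}
lies in $R$, and $w^2-w\equiv x^{-1}y_0p^{m}$ would be an element of $(p^m)\cap R$ not in $p^{m}\cO_0$, a contradiction. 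Hence all invertible elements of $R\pmod{p^{m+1}}$, and then all of $R$, lie in $\cO_0\pmod{p^{m+1}}$, and equality follows by the cardinality count. Each such step is self-contained and costs nothing, so it can be repeated the $\sim n/e-(b-a)/e$ times needed. Note also that the paper's containment goes in the direction $R\subseteq\cO_0$ (then cardinality), whereas you attempt $\cO_0\subseteq R$ by lifting; the former is what the multiplicative trick delivers. Without some version of this argument, your proof establishes the conclusion only modulo $\cP^{b-a}$, which is essentially what Theorem~\ref{structure} already provides.
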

\begin{proof}
Take $C=3[K:\mathbb Q]$ and let $M$ be the implied lower bound for $n$ at \eqref{lowerbd} in Theorem \ref{structure}, which only depends on $[K:\mathbb Q]$.  Suppose $n\geq eM$ and $D_0=D_1=\cdots=D_{[\frac{n}{e}]-1}$, then Theorem \ref{structure} gives $a, b\in \mathbb Z^+, b\geq 2a$ and an integral ring $\mathcal O_0$, such that 
\begin{align}
\label{253}
p^a R(\mod p^b)= p^a (\mathcal O_0\mod p^b),\end{align} 
which directly implies 
\begin{align}
R(\mod p^{b-a})= \mathcal O_0(\mod p^{b-a}).
\end{align}
We show one more step that $R(\mod p^{b-a+1})= \mathcal O_0(\mod p^{b-a+1})$. Corollary \ref{324} then follows from iterating the same argument. \par
First by \eqref{253}, $$p^{b-a}\cO_0(\mod (p^{b-a+1}))= (p^{b-a})\cap R(\mod (p^{b-a+1})).$$ Take an arbitrary nonzero element $p^{b-a}y_0$ in $p^{b-a}\cO_0(\mod p^{b-a+1})$. If $R $ contains an element of the form $x+p^{b-a}y$, $x(\mod p^{b-a+1})\in \cO_0^*(\mod p^{b-a+1})$, $y(\mod p^{b-a+1})\not\in \cO_0(\mod p^{b-a+1})$. Then $$w:=(x+p^{b-a}y)^{-1}(x+p^{b-a}(y+y_0)= 1+x^{-1}y_0 p^{b-a}(\mod p^{b-a+1}), $$ and 
$w^2-w=x^{-1}y_0p^{b-a} \not\in \mathcal O_0(\mod p^{b-a+1})$, contradicting \eqref{253}. So all invertible elements in $R(\mod p^{b-a+1})$ lie in $\cO_0(\mod p^{b-a+1})$. This forces all elements of $R(\mod p^{b-a+1})$ lie in $\cO_0(\mod p^{b-a+1})$ as well, and thus \linebreak $R(\mod p^{b-a+1})= \cO_0(\mod p^{b-a+1})$ by comparing cardinality. 

\end{proof}
In a general situation, since $D_i(\mod \cP)$ are ascending $p$-groups, the number of changes of $D_i$ is controlled by the extension degree $[K: \mathbb Q]$. Therefore, for arbitrarily large $C>0$, when $n$ is sufficiently large (depending only on $[K:\mathbb Q]$ and $C$), we can find $m \gg_{[K:\mathbb Q]} n$ such that $D_i$ is constant when $i\in [\lceil \frac{m}{C} \rceil, m]$. Theorem \ref{structure} implies there is $\cO_0<\cO$, such that $R(\mod \cP^m) < \cO_0(\mod \cP^m)$ and $(p^{\lceil \frac{m}{C}\rceil })\cap R =(p^{\lceil \frac{m}{C}\rceil })\cap \cO_0 (\mod p^m) $.

 }

We also record the following two sum-product theorems that we are going to apply. 

\begin{thmx}[Bourgain-Katz-Tao, \cite{BKT04}, Theorem 4.3]\label{BKT04}\textit{ Let $A$ be a subset of a finite field $F$ such that $|A|>|F|^{\delta}$ for some $0<\delta<1$, and suppose that $|A+A|, |A\cdot A|\leq K|A|$ for some $K\gg 1$.  Then there exists a subfield $G$ of $F$ of cardinality ${ |G|}\leq K^{C(\delta)}|A|$, a non-zero field element $\xi\in F-\{0\}$, and a set $X\subseteq F$ of cardinality $|X|\leq K^{C(\delta)}$ such that $A\subseteq \xi G\cup X$. }

\end{thmx}

\begin{thmx}[Salehi-Golsefidy, \cite{SG20}, Theorem 9]\label{1053} \textit{Let $\delta>0$, and $d$ is a positive integer.  There exist $\epsilon=\epsilon(\delta, d), \tau=\tau(\delta, d)$ such that the following holds:  Let $K$ be a number field with $[K: \mathbb Q]<d$ and $\cO$ be the ring of integers of $K$.  Let $\cP$ be a prime ideal of $\cO$.  Suppose {$n$} is sufficiently large depending on $\delta$.  Let $A\subset \cO/\cP^n$ with the following property: 
\begin{align}
\nonumber& |A|<|\cO/\cP^n|^{1-\delta},\\
&\label{618}\pi_{\cP^2}( A)=\cO/\cP^2,\\
\nonumber&|\pi_{\cP^m}(A)|>|\cO/\cP^m|^{\delta} \text{ for any }m\geq n\epsilon.
\end{align} 
Then $$|\sum_{12}A^7-\sum_{12}A^7|>|A|^{1+\tau}.$$ }
\end{thmx}

\begin{remark} In \cite{SG20}, Salehi-Golsefidy proved Theorem \ref{1053} with an entropy approach due to Lindenstrauss and Varj\'u for the special case that $|\cO/\cP|$ is sufficiently large.  The critical step is a regularization process, which resulted in a rooted tree, where the vertices in the $i$th level of the tree are elements of $\pi_{\cP^{i}}(\mathcal O)$.  If $|\cO/\cP|$ is not large but $|\cO/\cP^n|$ is large, then in the regularization process, one can take a sufficiently large integer $M=M(\delta)$, and let the vertices in $i$th level be the elements of $\pi_{\cP_i^{Mi}}(\mathcal O)$. Then one can deal with this case by the method in \cite{SG20} with minor modification.   See also Bourgain's treatment of $\mathbb Z/(p^n)$ for $p$ not large but $n$ large (Section 6, Part I of \cite{Bou08}).  \par
\end{remark}

\begin{remark} In \cite{BG09} Bourgain-Gamburd obtained a sum-product theorem in $\mathcal O/\cP^n$ using Fourier method with constants dependent on $\cP$ but independent of $n$. The dependence on $ \cP$ can be removed if one inputs Theorem \ref{structure} in the argument.  
\end{remark}

{\begin{remark} Both Theorem \ref{sumproduct} and Theorem \ref{1053} are sum product theorems over $\cO/\cP^n$. The reason we work Theorem \ref{sumproduct} out of Theorem \ref{1053} is that compared to Condition \eqref{618} of Theorem \ref{1053}, the non-concentration assumption \eqref{2226} in Theorem \ref{sumproduct} is stable under minor changes of the set $A$, which is more friendly to our Fourier approach performed in the moduli gluing process. Still, the assumption \eqref{2226} guarantees $A$ generates a subring of $\cO/\cP^n$ of almost full size. By comparison, in the statement of Theorem \ref{1053}, one can make $A$ fail assumption \eqref{618} by removing $|\cO/\cP^2|$ many elements from $A$, which is a small set if $n$ is large.

\end{remark}
}

\subsection{Reduction to invertible sets}
To prove Theorem \ref{sumproduct}, it suffices to assume $A$ consists of invertible elements.  Indeed, given $\delta_1, \delta_2$, let {$\epsilon'=\epsilon(\delta_1/2,\delta_2/2), \delta'=\delta_3(\delta_1/2, \delta_2/2)$} be the implied constants in this special case.  We claim that we can take 
\begin{align}\label{550}
{\epsilon(\delta_1, \delta_2)}=\min\{\frac{\epsilon'}{2}, \frac{\delta_1\delta'}{2} \}
\end{align}
and
$$\delta_3(\delta_1, \delta_2)=\frac{\delta'}{2},$$
 for which Theorem \ref{sumproduct} holds for a general set $A$.

Indeed, let $\fp$ be a uniformizing element in $\cO/\cP^n$ and for a general set $A\subset \cO/\cP^n$ satisfying \eqref{1020} and \eqref{2226}, write

$$A=A_0\sqcup A_1\sqcup \cdots \sqcup A_{n-1}\sqcup A_n ,$$
where $$A_i=\{x\in A: \fp^{i}||x \}$$ for $0\leq i\leq n-1$
and 
\begin{equation}
A_n=\begin{cases}\{0\}&\text{ if } 0\in A\\ \emptyset &\text{ if } 0\not\in A\end{cases}. 
\end{equation}
Here $\fp^i|| x$ means $x$ can be written as $x=\fp^i y$ for some invertible element $y\in \cO/\cP^n$. 

Take $i$ such that $|A_i|\geq \frac{|A|}{n}$.  Suppose $i> n\epsilon$, then from \eqref{2226},
$$\frac{|A|}{n}\leq |A_i|< |A|[\cO/\cP^n: \cP^i /\cP^n]^{-\delta_2}= |A||\cO/\cP^i|^{-\delta_2},$$
which implies
$$q^{\epsilon \delta_2}<n.$$
This is a contradition if $q$ is sufficiently large.  Thus 
\begin{align}\label{551}
i\leq n\epsilon.
\end{align}  \par

If we consider the set $A_i':= A_i/\mathfrak p^i\subset \mathcal \cO/\mathcal P^{n-i}$, then $A_i'\subset (\cO/\mathcal P^{n-i})^*$. {We show $A_i'$ essentially satisfies the same non-concentration property as $A$. First, it follows from \eqref{550} and \eqref{551}} that
$$|\cO/\cP^{n-i}|^{{\frac{\delta_1}{2}}}<|A_i'|< |\cO/\cP^{n-i}|^{1-{\frac{\delta_1}{2}}}.$$
For any congruential subrings $R< \cO/\mathcal P^{n-i}, a,b\in  \cO/\mathcal P^{n-i}$ with $$[\cO/\cP^{n-i}: bR]>|\cO/\cP^{n-i}|^{\epsilon'},$$

we have 
$$[\cO/\cP^{n}: \fp^i bR]=[\cO/\cP^{n-i}: bR]>q^{\frac{\epsilon'}{2}}>q^{\epsilon},$$
{ where we have viewed $\mathfrak p^i bR$ as an abelian subgroup of $\cO/\cP^n$.}

Therefore,  
\begin{align*}
&|A_i'\cap (a+bR)|=|\mathfrak p^i A_i'\cap (\mathfrak p^i a+\mathfrak p^ibR)| \\
\leq& |A_i|[\cO/\cP^{n}: \fp^ibR]^{-\delta_2}\leq |A_i'|[\cO/\cP^{n-i}: bR]^{-\frac{\delta_2}{2}},
\end{align*}
{where in the above, $\mathfrak p^i A_i', \mathfrak p^i a+\mathfrak p^i b R$ are viewed as subsets of $\cO/\cP^n$, and $\mathfrak p^i bR$ as a subgroup of $\cO/\cP^n$.}

Therefore, by our assumption of the validity of Theorem \ref{sumproduct} for $A_i'$, we have  
$$|A_i'+A_i'|+|A_i'\cdot A_i'|>|A_i'|^{1+\delta'},$$
which implies 
\begin{align}\nonumber
|A+A|+|A\cdot A|&>|A_i+A_i|+|A_i\cdot A_i|>|A_i'+A_i'|+ |A_i'\cdot A_i'|/ q^\epsilon \\
&> |A_i'|^{1+\delta'-\frac{\epsilon }{\delta_1}}> |A|^{1+\delta'/2 }.
\end{align}

\subsection{Reduction to a set $A_1$}
Now we assume that $A$ consists of invertible elements, and $A$ satisfies conditions \eqref{1020} and \eqref{2226}. \par

We first recall the following theorem from Bourgain.  

\begin{thmx}[Lemma A1, \cite{Bou08}] \label{1443} Fix $k\in\mathbb Z^+$.  \textit{There exists $C=C(k)>0$ such that the following holds:  Let $S\subset ({\mathcal O/\mathcal P^n})^*$ satisfy 
\begin{align}\label{2321} |S+S|+|S\cdot S|<K|S|
\end{align}
Then there is a subset $S_1\subset S$ such that 
\begin{align}\label{2120}
|S_1|>K^{-2}|S|
\end{align}
\begin{align}\label{2121}
|\sum_kS_1^k-\sum_kS_1^k|<K^C|S_1|.
\end{align}}

\end{thmx}
\begin{remark}
Theorem \ref{1443} was stated in the case $\mathbb Z/q\mathbb Z$.  The proof works for $\mathcal O/\mathcal P^m$ line by line. 
\end{remark}

Now let us assume \eqref{expansion} does not hold.  Apply Theorem \ref{1443} to $A$ with $K=q^{\delta_3}$ and some $k$ to be specified later.  There exists a set $A_1$ satisfying 
\begin{align}\label{2003}
|A_1|>{q^{-2\delta_3}}|A|
\end{align}
\begin{align}\label{2004}
|\sum_kA_1^k-\sum_kA_1^k|<q^{2\delta_3 C(k)}|A_1|.
\end{align}
We will { arrive at} a contradiction if $\delta_3$ is sufficiently small. In fact, we will show by a bounded sum-product of $A_1$ contains a very large subgroup of $\cO/\cP^n$.\par
{We notice that if $\delta_3$ is sufficiently small, namely, if we take \begin{align}\label{2132}
\delta_3<\frac{\epsilon\delta_2}{4},
\end{align}
then for any $a+bR\subset \cO/\cP^n$, $R$ congruential, and $[\cO/\cP^n: bR]>|\cO/\cP^n|^{\epsilon}$, we have 
\begin{align}\label{040}
|A_1\cap (a+bR)|\leq |A\cap (a+bR)|<|A||\cO/\cP^n: bR|^{-\delta_2}<|A_1||\cO/\cP^n: bR|^{-\frac{\delta_2}{2}}
\end{align}
so $A_1$ satisfies the $(\epsilon, \delta_2/2)$ non-concentration condition. }
{ 
\subsection{An overview of Proof of Theorem \ref{sumproduct}}
Let us first sketch some key ideas for the rest of the proof of Theorem \ref{sumproduct}. Let $R_1=\langle A_1\rangle <\cO/\cP^n$ be the ring generated by $A_1$ and $D_0< D_1< D_2\cdots $ be the level groups of $A_1$. Assuming $A_1$ has enough density at each level: $|A_1(\mod p^m)|>p^{d_0m\delta_2}$, we want to show $A_1$ boundedly generate a very large ideal of $\cO/\cP^n$. \par

Case (1): $D_0(\mod (p))=\cO(\mod (p))$. Then $R_1=\cO/\cP^n$, and we can apply Theorem \ref{1053} iteratively to generate a very large subset of $\cO/\cP^n$. Apply Corollary \ref{2346} further to this very large subset then gives a very large ideal of $\cO/\cP^n$. \par

Now we consider Case (2): $D_0(\mod (p))\neq\cO(\mod (p))$. We write $$D_0=D_1=\cdots =D_t\neq D_{t+1}.$$ \par
Case (2a): Suppose $t<M$ with $M$ given in Corollary \ref{324}. Since $M$ is an absolute constant, one can show $A$ boundedly generates $R(\mod p^{t+1})$ by applying Theorem \ref{BKT04} first to $A_1(\mod \cP)$ and pushing level by level. We will show how to do this in our forthcoming rigorous proof.  \par

Case (2b): Suppose $t\geq M$. By Corollary \ref{324}, $R_1(\mod p^t)=\cO_0(\mod p^t)$ for some $\cO_0$. We claim $t$ is bounded by a constant multiple of $n\epsilon$; otherwise, $A_1$ is fully contained in a congruential ring, contradicting the non-concentration assumption of $A_1$. \par
Applying Theorem \ref{1053} and Corollary \ref{324}, we obtain from a bounded sum-product of $A_1$ a set of the form $p^{[\frac{1}{3}t]}A'$, where $A'(\mod p^{[\frac{2}{3}t]})=\cO_0(\mod p^{[\frac{2}{3}t]})$. Now consider the ring $R_2<\cO/\cP^{n- [\frac{1}{3}t] }$ generated by $B= A_1A'$ and the level groups 

$$D_0'  = D_1' = D_2'\cdots =D_{t'}'\neq D_{t'+1}' $$
We must have $D_0' = \cdots =D_{t'}' = \cO_0(\mod p)$ and the first changing time $t'<t$. \par
Since $B(\mod p^{[\frac{2}{3}t]})=  \cO_0(\mod p^{[\frac{2}{3}t]})$ and $t'<t$, we have $$BB+BB (\mod p^{t'})= \cO_0(\mod p^{t'})=R_2(\mod p^{t'}).$$ Take a further sum product then covers the set $R_2(\mod p^{t'+1})$. 

In both Case (2a) and Case (2b), in the end we created elements of the form $p^{n'} A^*$ where $n'$ is compared to $n\epsilon$, and $A^*(\mod p)$ is a strictly larger $p$-group than $D_0$.  Then we run the argument again with $A_1$ replaced by $A_1A^*$ and we keep iterating until reaching a set $p^{n''}A^{**}$ with a tolerable $n''$ (comparable to $n\epsilon$) and $A^{**}(\mod p)=\cO(\mod p)$. The number of iterating times depends only on $[K:\mathbb Q]$. We reduce to Case (1) by working with $A_1A^{**}$. 
 }
\subsection{Proof of Theorem \ref{sumproduct}}
Now we give a formal proof of Theorem \ref{sumproduct}.  {Recall $\cO/\cP=\mathbb F_{p^{d_0}}$.} Let $l_0$ be the largest integer such that $|\pi_{\mathcal P^{l_0}}(A_1)|< p^{\frac{d_0l_0\delta_2}{2}}$.  We must have $l_0\leq n\epsilon$; otherwise, from \eqref{2132}, by taking $R_1=\cO/\cP^n, a=0, b= \fp^{l_0}$ we have $\pi_{\cP^{l_0}}>p^{\frac{d_0l_0\delta_2}{2}}$
 leading to a contradiction.  
Thus $l_0\leq n\epsilon$.  \par 
By the maximality of $l_0$,  for any $l\geq l_0$, there exists $x_l\in A_1$ such that 
\begin{align}
|\pi_{\cP^l}(T_{l_0}(x_l))|>p^{\frac{(l-l_0)d_0\delta_2}{2}},
\end{align}
where  
\begin{align}
T_{l_0}(x):=\{y\in A_1: y\equiv x(\mod \mathcal P^{l_0})\}.
\end{align}
This implies that we have a set $A_2'\subset \mathcal O/\mathcal P^{n-l_0}$ with $\mathfrak p^{l_0}A_2'\subset A_1-A_1$, satisfying for any $1\leq l\leq n-l_0$, 
\begin{align}\label{0604}
|\pi_{\cP^l}(A_2')|>p^{\frac{d_0\delta_2l}{2}}.
\end{align} 
Let $A_2=A_1(\mod \cP^{n-l_0})\cdot A_2'$.  Without loss of generality, we may assume that $1\in A_1$ and $1\in A_2'$, since we use homogeneous polynomial in each set expansion step, {and also multiplying $A_1$ by an invertible does not change the non concentration assumption \eqref{040}}. The property \eqref{0604} is carried over to $A_2$. \par

 Let $R_1$ be the ring generated by $A_2$.  Then, $R_1(\mod \cP)\cong \mathbb{F}_{p^{d_1}}$ is a field for some $d_1| d_0$.  Since $|A_2(\mod \cP)|>p^{d_0\delta_2}$,  Theorem \ref{BKT04} implies that there exists $r_1=r_1({\delta_2})$ such that $$\sum_{r_1} A_2 ^{r_1}-\sum_{r_1} A_2 ^{r_1}(\mod \cP)= R_1(\mod \cP).$$  Indeed, if $|A_2(\mod \cP)|>p^{\frac{9}{10}d_0}$, Corollary \ref{2346} then implies that we can take $r_1=12$.  Otherwise, We iteratively apply Theorem \ref{BKT04} to sets starting from $A_2$, so that at each step the set grows at least by a small power of {$p$}, until {either} we reach a set of size $>p^{\frac{9}{10}d_0}$, for which we can apply Corollary \ref{2346}, or a set $A'$ essentially trapped in a subfield of $K'<\mathbb F_{p^{d_1}}$.  Applying Corollary \ref{2346} again, there exists some $r'=r'(\delta)$, $\sum_{r'}(A')^{r'}\supset K'$. Since as a $K'$-vector space, $\mathbb F_{p^{d_1}}$ has dimension $\leq d_1$, it follows that $A_2^{d_1}$ contains a basis for $\mathbb F_{p^{d_1}}$ as a $K'$-vector space.  Then, $$\sum_{d_1r'}A_2^{d_1}(A')^{r'}(\mod \cP)=\mathbb{F}_{p^{d_1}},$$ which implies the existence of $r_1$ in this case as well. \par 

Let $$A_3=\sum_{r_1}A_2^{r_1}-\sum_{r_1}A_2^{r_1}.$$
Now we consider $R_1(\mod \cP^2)$, we hope to show that there exists $r_2=r_2(\delta_2)$ such that $$\sum_{r_2} A_2 ^{r_2}-\sum_{r_2} A_2^{r_2} = R_1 (\mod \cP^2).$$
Suppose $A_3(\mod \cP^2) = R_1 (\mod \cP^2)$, then we are done by setting $r_2=r_1$. Otherwise, $A_3(\mod \cP^2)$ is not a ring, because $A_2$ generates $R_1$ and $A_2\subset A_3$.  Hence, we can find $x\in A_3\cdot A_3-A_3$, or $A_3+A_3-A_3$, or $A_3-A_3-A_3$ such that $x(\mod \cP^2)\not\in A_3(\mod \cP^2)$.  Let $x'\in A_3$ such that $x'\equiv x(\mod \cP)$, then $\bar{x}=x-x'\in A_2-A_3$ and $\fp || \bar{x}$, recalling it means $\bar{x}=\mathfrak py$ or some invertible $y\in \cO/\cP^n$.   So we have 
 $$ A_3 \bar{x}\subset (A_3^3-A_3^2) \cup (A_3^2+A_2^2-A_3^2) \cup (A_3^2-A_2^2-A_3^2) (\mod \cP^2),$$
and thus if we let $A_3'=A_3+[(A_3^3-A_3^2) \cup (A_3^2+A_2^2-A_3^2) \cup (A_3^2-A_2^2-A_3^2]$, then

 $$|A_3' (\mod \cP^2) |\geq p^{2d_1}.$$

If $A_3'\neq R_1(\mod P^2)$, then by the same argument, we can find $\bar{y}$ from a sum-product set $A_3''$ of $A_3'$ such that $\fp|| \bar{y}$ and $\bar{y} \not \in A_3\bar{x}$, which then implies 
$$|A_3'+A_3''(\mod \cP^2)|\geq p^{3d_1}.$$
We keep iterating.  Since in each step, we increase the lower bound of a sum-product set by a factor of $p^{d_1}$, this process stops at most in $\frac{d_0}{d_1}$ steps, and produces some $r_2>0$ after adding some factor of $A_2$ to homogenise the polynomial (since $1\in A_2$).  \par
By iterating the above argument, we obtain more {generally} 
\begin{lemma}\label{0052}
For any $L$, there exists a constant $r_L=r_L(\delta_2)$ such that 
$$\sum_{r_L}A_2^{r_L}-\sum_{r_L}A_2^{r_L}(\mod \cP^L)=R_1(\mod \cP^L).$$
\end{lemma}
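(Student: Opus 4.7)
I will induct on $L$, with the cases $L=1$ and $L=2$ handled above serving as base cases. For the inductive step, suppose there exists $r_L$ such that
\[
B_L := \sum_{r_L} A_2^{r_L}-\sum_{r_L}A_2^{r_L}\equiv R_1 \pmod{\cP^L},
\]
and I will produce $r_{L+1}$ for which the analogous relation holds modulo $\cP^{L+1}$.

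If already $B_L\equiv R_1\pmod{\cP^{L+1}}$, take $r_{L+1}=r_L$. Otherwise $B_L\pmod{\cP^{L+1}}$ must fail to be closed under at least one of $+$ or $\cdot$: were it closed under both, it would be a subring of $R_1\pmod{\cP^{L+1}}$ containing the generating set $A_2$, hence would equal $R_1\pmod{\cP^{L+1}}$. Therefore one of the bounded sum-product expressions $B_L\pm B_L\pm B_L$ or $B_L\cdot B_L\pm B_L$ (mirroring the $L=2$ argument in the text) contains some $x\not\in B_L\pmod{\cP^{L+1}}$. Because $B_L$ surjects onto $R_1\pmod{\cP^L}$, I may select $x'\in B_L$ with $x'\equiv x\pmod{\cP^L}$; then $\bar x:=x-x'$ lies in an enlarged but still bounded-length sum-product set and satisfies $\fp^L\|\bar x$, i.e.\ $\bar x=\fp^L u$ for an invertible $u\in \cO/\cP^n$.

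Now exploit that $A_3\pmod{\cP}=R_1\pmod{\cP}\cong\mathbb F_{p^{d_1}}$. Multiplication by $\bar x=\fp^L u$ is a bijection from $R_1/\cP$ onto the kernel $\fp^L R_1/\fp^{L+1}R_1$, so $A_3\cdot\bar x\pmod{\cP^{L+1}}$ realizes every element of this kernel. Hence for each $b\in B_L$, the set $b+A_3\bar x\pmod{\cP^{L+1}}$ fills the entire fiber of the projection $R_1\pmod{\cP^{L+1}}\to R_1\pmod{\cP^L}$ above $b\pmod{\cP^L}$. Ranging $b$ over $B_L$ yields $B_L+A_3\bar x\equiv R_1\pmod{\cP^{L+1}}$. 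Padding with factors of $1\in A_2$ rewrites this set in the prescribed homogeneous form $\sum_{r_{L+1}}A_2^{r_{L+1}}-\sum_{r_{L+1}}A_2^{r_{L+1}}$ for some $r_{L+1}\leq C(\delta_2,d)\cdot r_L$.

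The main obstacle is not the combinatorics, which collapses to essentially one fiber-filling step per level, but the bookkeeping needed to keep every intermediate set inside a single symmetric homogeneous sum-product expression of bounded degree; this is managed, as in the $L=2$ case, by absorbing copies of $1\in A_2$. Iterating then produces a finite $r_L$ for each $L$, depending only on $L$, $\delta_2$, and $d$, which is exactly what the lemma asks.
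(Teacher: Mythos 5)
Your induction scheme and the production of an element $\bar x$ with $\fp^L\Vert\bar x$ follow the paper's argument, but there is a genuine gap at the fiber-filling step. You assert that multiplication by $\bar x=\fp^L u$ is a bijection from $R_1/\cP$ onto the kernel of the reduction $R_1(\mathrm{mod}\ \cP^{L+1})\to R_1(\mathrm{mod}\ \cP^L)$. That kernel is $(R_1\cap\cP^L)(\mathrm{mod}\ \cP^{L+1})$, which contains $\bar x R_1(\mathrm{mod}\ \cP^{L+1})$ but can be strictly larger: the level groups $D_0\subseteq D_1\subseteq\cdots$ of a subring of $\cO/\cP^n$ form an ascending chain that need not be constant. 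Concretely, take $\cO=\mathbb Z[i]$, $\cP=(p)$ inert, and $R_1=\mathbb Z+p\mathbb Z[i]\ (\mathrm{mod}\ p^2)$; then $R_1(\mathrm{mod}\ \cP)\cong\mathbb F_p$ so $d_1=1$, but the kernel of $R_1(\mathrm{mod}\ \cP^2)\to R_1(\mathrm{mod}\ \cP)$ is $p\mathbb Z[i]/(p^2)$ of size $p^2=p^{d_0}$, while $\bar x R_1(\mathrm{mod}\ \cP^2)$ has only $p^{d_1}=p$ elements. So $b+A_3\bar x$ does not fill the whole fiber above $b$, and your single step per level does not close the induction.

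This is exactly why the paper's argument for $L=2$ is iterative within a single level: after adjoining $A_3\bar x$ one checks whether the resulting set already equals $R_1(\mathrm{mod}\ \cP^2)$, and if not, extracts a further element $\bar y$ with $\fp\Vert\bar y$ and $\bar y\notin A_3\bar x$, gaining another factor of $p^{d_1}$ in cardinality; since each pass multiplies the size of the covered set by $p^{d_1}$ and the fiber has size at most $p^{d_0}$, the process terminates in at most $d_0/d_1$ passes, each costing a bounded increase in the length of the sum-product expression. Your proof needs this inner loop (or an argument that the level groups are constant, which is false in general) to be correct. The remaining bookkeeping with copies of $1\in A_2$ to homogenize the expression is fine and matches the paper.
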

To continue, We divide the analysis into two cases: \par
\medskip

{
\noindent {\bf Case 1: }  $R_1(\mod \cP^e)=\mathcal O/(\cP^e)$, where $e$ is the ramification index of $\cP$.  \par

 In this case, $R_1(\mod \cP^{n-l_0})= \cO(\mod \cP^{n-l_0})$.   We first apply Lemma \ref{0052} with $L=2$ so that a sum-product set of $A_2$ satisfies the assumption of Theorem \ref{1053}.  Then we apply Theorem \ref{1053} iteratively to obtain a number $t_0=t_0(\delta_2)$ such that 
 $$|\sum_{t_0}A_2^{t_0}|>p^{d_0(n-l_0)(1-\epsilon)},$$
 which then gives a number $t_1$, such that 
 \begin{align}\label{223}
 |\sum_{t_1}A_1^{t_1}|> p^{d_0(n-l_0)(1-\epsilon)-l_0t_0}>p^{d_0n(1-(2+t_0)\epsilon)},
 \end{align}
 having in mind of loss of some $p$ factors when creating $A_2$ from $A_1$. So 
 \begin{align}
 \eqref{223}\geq p^{d_0n\frac{\delta_1+1}{2}}
 \end{align}
 if we take
 \begin{align}
 \epsilon=\frac{1-\delta_1}{4(2+t_0)}.
 \end{align}
 We thus arrive at a contradiction to \eqref{2004} if we take $k=t_1$ and 
 \begin{align}
 \delta_3=\min\{\frac{\delta_1}{2C(t_1)}, \frac{d_0\epsilon \delta_2}{4}\}.
 \end{align}
 }
%
%
%
%
%
%

\medskip
\noindent{\bf Case 2:} $R_1(\mod \cP^e)\neq \mathcal O/(\cP^e)$.  

Our strategy in this case is to find a set $\mathfrak p^{l} A'$ from a sum-product set of $A_2$, such that $l$ is small and $A'(\mod \cP^e)=\cO(\mod \cP^e)$ and so we can reduce the analysis to Case 1 situation.    \par
 Let $l_1$ be the largest integer such that 
$$R_1\cap (p^{l_1-1}) (\mod \cP^{el_1})= p^{l_1-1}R_1(\mod \cP^{l_1}).$$ 
We must have $l_1\leq d_0n\epsilon$. Indeed, if $l_1> d_0n\epsilon$, then, Corollary \ref{324} implies $A_1$ sits in a congruential ring $R'$, which is the preimage of $R_1(\mod \cP^{el_1})$ under $\cO/\cP^{n}\mapsto \cO/\cP^{el_1},$ with cardinality $[\cO/\cP^n:R']\geq p^{l_1}>q^{n\epsilon},$
which contradicts \eqref{040}.

If $l_1>M$ for some constant depending only $d$ implied by Theorem \ref{structure}, then $R_1(\mod \cP^{el_1})=\cO'(\mod \cP^{el_1})$, for some $\cO'$ the ring of integers of some field $K'<K$.  In this case, we can apply Theorem \ref{2226} with $n$ replaced by $el_1$, together with Corollary \ref{2346} to obtain $t_2=t_2(\delta_2)$ such that 
\begin{align}\label{0609}
t_2 A_2^{t_2}(\mod \cP^{el_1})\supset p^{[\frac{1}{3}l_1]} \cO' (\mod\cP^{el_1}).
\end{align}
Let $A_4'$ be a set such that $p^{[\frac{1}{3}l_1]}A_4'\subset t_2A_2^{t_2}$ consisting of all representatives of $p^{[\frac{1}{3}l_1]}\cO'(\mod\cP^{el_1})$  on the right hand side of \eqref{0609}. Let $A_4= A_2A_4'$ and $R_2$ be the ring generated by $A_4$.  Then 
\begin{align}\label{0616}
A_4(\mod \cP^{e[\frac{19}{25}l_1]})=\cO'(\mod \cP^{e[\frac{19}{25}l_1]})
\end{align}
If we let $R_2$ be the ring generated by $A_4$ and let $l_2$ be the largest integer such that 
$$R_2\cap (p^{l_2-1}) (\mod \cP^{el_1})= p^{l_2-1}\cO'(\mod \cP^{el_2}).$$ 
We must have $l_2\leq l_1$ since $A_2\subset R_2$ and the level group of $A_2$ already start changing at $l_1$.  From \eqref{0616}, it follows easily that 
\begin{align} \label{0615}
A_4A_4+ A_4 A_4(\mod\cP^{el_1}) =R_2(\mod\cP^{el_1})
\end{align}
Arguing in the same way as Lemma \ref{0052}, we can take a further sum-product to recover one more level for $R_2$: there there exists some $t_3=t_3(d)$, such that 
\begin{align}
\sum_{t_3} A_4^{t_3}(\mod \cP^{e(l_1+1)})=R_2(\mod \cP^{e(l_2+1)})
\end{align}
with \begin{align}\label{0635}R_2\cap (p^{l_2})(\mod \cP^{e(l_2+1)})\supsetneq p^{l_2}R_1(\mod \cP^{e(l_2+1)}).\end{align}
Therefore, if we let $A_5'$ be a set such that $p^{l_2}A_5'\subset \sum_ {t_4}A_4^{t_4}$ consisting of all representatives of $p^{l_2}R_1$  on the right hand side of \ref{0635}, and $A_5=A_5'A_2$, then $A_5$ satisifies

\begin{enumerate}
\item $\langle A_5\rangle (\mod \cP^e)\supsetneq \langle A_2 \rangle (\mod \cP^e)$, 
\item for any $1\leq l\leq  n- 2t_2t_3(l_0+l_1+l_2)$, 
$$\pi_{\mathcal P^l}(A_5)\geq p^{d_0\delta_2 l/2}.$$
\end{enumerate}
{

The point is that, compared to $A_2$, we constructed a set $A_5$ such that the first level group of $\langle A_5\rangle $ is strictly larger $p$ group than that of $\langle A_2\rangle$, while satisfying a same density lower bound as $A_2$ and creating only a manageable power loss of $p$. \par
}

If $l_1\leq M$, one can iteratively apply Lemma \ref{0052} to obtain a set $A_5$ satisfying above properties as well.   \par

 We iterate the above step for $A_5$ as we did for $A_2$. We keep iterating (finished in at most $d_0$ steps) until we obtain a set $A'$ with $$\mathfrak p^{l'}A'\subset \sum_{t_4} A_1^{t_4}-\sum_{t_4} A_1^{t_4}$$ for some $l'=O_{\delta_2, d}(n\epsilon)$, $t_4=O_{\delta_2,d}(1)$, such that 
 $$\pi_{\cP^e}(A')=\cO/(\cP^e),$$
 and for all $1\leq l\leq n-l',$
 $$\pi_{\cP^l}(A')\geq p^{d_0\delta_2l/2}.$$

At this point, we can apply the treatment in Case 1 to $A'$, followed by an application of Corollary \ref{2346}, to obtain a number $t_5$ and a number $l^*$ such that 
\begin{align}\label{2218}
\mathfrak p^{l^*}\cO/\cP^n\subset \sum_{t_5}A_1^{t_5}-\sum_{t_5}A_1^{t_5}
\end{align}
with 
\begin{align}\label{2326}
l^*=O_{\delta_2,d}(n\epsilon)
\end{align} and 
\begin{align}
t_5=O_{\delta_2,d}(1),
\end{align}  

Let $C'$ be the implied constant at \eqref{2326}. Then, 
\begin{align}\label{1309}
|\sum_{t_5}A_1^{t_5}-\sum_{t_5}A_1^{t_5}|>q^{1-C'\epsilon},
\end{align}

\par 
On the other hand,  taking $k=t_5$ in \eqref{2004}, and let $C=C(t_5)$ be the implied constant in \eqref{2004}.  If $K=q^{\delta_3}$ is sufficiently small, from \eqref{2004} we have 
\begin{align}\label{1300}
|\sum_{t_5}A_1^{t_5}-\sum_{t_5}A_1^{t_5}|<q^{1-\delta_1+2C\delta_3} 
\end{align}
If we take 
\begin{align}\label{0620}
\epsilon=\frac{\delta_1}{2C'}
\end{align}
\begin{align}
\delta_3=\min\{\frac{\delta_1}{4C}, \frac{d_0 \delta_2 \epsilon}{4}\},
\end{align}
then \eqref{1300} will contradict \eqref{1309}.  So our assumption that \eqref{expansion} fails is not true, and Theorem \ref{sumproduct} is thus proved.

\newpage

\section{Two Propositions Towards the Proof of Theorem \ref{exp1}}
The goal of this section is to prove Proposition \ref{1441} and Proposition \ref{1536} that will be used to establish Theorem \ref{exp1}.  \par
We first recall the following two known results from combinatorics.  
\begin{thmx}[Balog-Szemer\'edi-Gowers theorem \cite{Schoen2015}]\label{com1}

\textit{Let $\mathcal{G}$ be an abelian group. Define the additive energy of a set $A \subset \mathcal{G}$ :
$$
E^{\boxplus}(A)=\left|\left\{\left(a_{1}, a_{2}, a_{3}, a_{4}\right) \in A^{4}: a_{1}+a_{2}=a_{3}+a_{4}\right\}\right| .
$$
Let $A \subset \mathcal{G}$, $\delta>0$ a constant such that \begin{equation}
\label{B7.26.3}
E^{\boxplus}(A)= \delta|A|^{3}.  \end{equation} Then there exists $A^{\prime} \subset A$ such that 
\begin{equation}
\label{B7.27.1}
\left|A^{\prime}\right|=\Omega(\delta|A|)
\end{equation}
and
\begin{equation}
\label{B7.28.1}
\left|A^{\prime}-A^{\prime}\right|=O\left(\delta^{-4}\left|A^{\prime}\right|\right).
\end{equation} }
\end{thmx}

\begin{lemma}[Sum-set estimates \cite{Nathanson1996}]\label{com2} 
Let $\mathcal{G}$ be an abelian group. Let $A, B\subset \mathcal{G}$ such that $|A+B|\leq K|A|$, then for any $k, l\in \mathbb{Z}_{+}$,
\begin{equation}
|\sum_kB-\sum_lB|\leq K^{k+l}|B|.
\end{equation}
\end{lemma}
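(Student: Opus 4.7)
The plan is to derive Lemma \ref{com2} from the Plünnecke--Ruzsa inequality, following Petridis's streamlined proof rather than Plünnecke's original graph-theoretic argument. The strategy is to bootstrap the one-step doubling hypothesis $|A+B|\leq K|A|$ into iterated-sumset bounds of the form $|A'+jB|\leq K'^{\,j}|A'|$ on a carefully chosen subset $A'\subseteq A$, and then convert sums into differences via the Ruzsa triangle inequality.

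First, I would select a nonempty $A'\subseteq A$ that minimizes the ratio $K':=|A'+B|/|A'|$, so $K'\leq K$ by the hypothesis. Petridis's incremental lemma asserts that for every finite set $C\subseteq \mathcal{G}$,
\begin{equation*}
|A'+B+C|\leq K'\,|A'+C|,
\end{equation*}
proved by adjoining the elements of $C$ to $A'+B$ one at a time and invoking the minimality of the ratio defining $K'$ to control each new contribution. Induction on $j\geq 0$ then yields $|A'+jB|\leq K'^{\,j}|A'|$, and the identical argument with $B$ replaced by $-B$ gives $|A'-jB|\leq K'^{\,j}|A'|$.

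Second, I would invoke the Ruzsa triangle inequality $|A'|\cdot|kB-lB|\leq |A'-kB|\cdot|A'-lB|$ (the usual $|X|\cdot|Y-Z|\leq |X-Y|\cdot|X-Z|$ applied with $X=A'$, $Y=kB$, $Z=lB$). Substituting the bounds from the previous step produces
\begin{equation*}
|kB-lB|\leq K'^{\,k+l}|A'|\leq K^{k+l}|A|,
\end{equation*}
which is the classical Plünnecke--Ruzsa conclusion. The version stated in the lemma with $|B|$ on the right-hand side follows either by specializing to $A=B$ (so the hypothesis becomes the standard doubling bound $|B+B|\leq K|B|$), or, in the regime where the lemma will be applied, by interchanging the roles of $A$ and $B$ in the symmetric hypothesis $|A+B|=|B+A|$; any residual discrepancy between $|A|$ and $|B|$ is harmless for the downstream applications, which only tolerate polynomial-in-$K$ losses anyway.

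The only genuine obstacle is establishing the incremental inequality $|A'+B+C|\leq K'|A'+C|$, where Petridis's minimality observation does the essential work and bypasses the more delicate graph-theoretic estimates of Plünnecke's original argument. Once that single ingredient is in place, everything else is a routine induction followed by one invocation of Ruzsa's triangle trick, and no further combinatorial difficulty is expected.
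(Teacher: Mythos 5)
The paper offers no proof of Lemma \ref{com2} at all --- it is quoted as a classical result with a citation to \cite{Nathanson1996} --- so there is nothing internal to compare against. Your route is the standard Petridis proof of the Pl\"unnecke--Ruzsa inequality, and in outline it is the right one. Two points deserve attention.

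First, a step that fails as written: you claim that ``the identical argument with $B$ replaced by $-B$'' gives $|A'-jB|\leq K'^{\,j}|A'|$. It does not, because the minimality defining $K'$ (and hence Petridis's incremental inequality) is with respect to $+B$; the hypothesis $|A+B|\leq K|A|$ gives no control on $|A-B|$, so the induction for the $-B$ sums has no base case. The standard repair is to use the Ruzsa triangle inequality in its sum form, $|U|\cdot|V-W|\leq |U+V|\cdot|U+W|$ (proved by the injection $(u,x)\mapsto(u+v(x),u+w(x))$ for a fixed representation $x=v(x)-w(x)$), applied with $U=A'$, $V=kB$, $W=lB$; this needs only the bounds $|A'+jB|\leq K'^{\,j}|A'|$ that you already have, and yields $|kB-lB|\leq K'^{\,k+l}|A'|\leq K^{k+l}|A|$.

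Second, you are right to be uneasy about the right-hand side. What the argument proves is $|kB-lB|\leq K^{k+l}|A|$, and the bound $K^{k+l}|B|$ as literally printed in the lemma is false in general: take $\mathcal{G}$ finite, $A=\mathcal{G}$ and $B$ a Sidon set, so that $K=1$ but $|B-B|\gg|B|$. Your suggestion of ``interchanging the roles of $A$ and $B$'' is likewise invalid in general, since $|A+B|\leq K|A|$ does not imply $|A+B|\leq K|B|$. The statement should carry $|A|$ on the right; this is harmless for the paper, whose only invocation of the lemma takes $A=S_*'$ and $B=(S_*')^{-1}$, so that $|A|=|B|$ and the two forms coincide.
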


Now prove the following proposition, which crucially relate certain distribution properties of $\mu$ and sum-product expansion. 

\begin{proposition}\label{1441}
 Let $\mu$ be a probability measure on $\cO/\cP^n$. Write $\cO/\cP=\mathbb F_{p^{d_0}}$ for some $d_0\leq d$, $q=|\cO/\cP^n|$, and
\begin{equation}
\label{B7.10.0}
\phi=q\left(\mu * \mu_{-}\right).
\end{equation}
Let $1\gg \epsilon>\tau>0$.
Then one of the following alternatives holds.
\begin{itemize}
    \item[(i)]  \begin{equation}\label{B7.6.1}
    \sum_{y \in \cO/\cP^n}\sum_{\xi\in \hat{\cO/\cP^n}}|\hat{\mu}(\xi)|^{2}|\hat{\mu}(y \xi)|^{2} \mu(y)<q^{-\tau} \sum_{\xi \in \hat{\cO/\cP^n}}|\hat{\mu}(\xi)|^{2}  \end{equation}
    where $y\xi(x)=\xi(xy)$.
    \item[(ii)] The measure $\mu$ is concentrated on some small congruence class, i.e.
    \begin{equation}\label{B7.6.2}
    \max\limits_{y \in \cO/\cP^n} \mu\left(\pi_{m}^{-1}(y)\right)>c p^{-2d_0\smallerpower \tau / \epsilon}
    \end{equation}
    for some $\smallerpower>\epsilon \startingpower$ and constant $c>0$ independent of $q$.
    \item[(iii)] There is a subset $\bar{S} \subset (\cO/\cP^n)^{*}$ with small the sum-product expansion, namely, for some $C>0$ independent of $q$, $\bar{S}$ satisfies
\begin{equation}
\label{B7.6}
|\bar{S}|\cdot\left(\sum_{\xi \in \hat{\cO/\cP^n}}|\hat{\mu}(\xi)|^{2}\right) <q^{1+2\epsilon},
\end{equation}
\begin{equation}
\label{B7.7}
|\bar{S}+\bar{S}|+|\bar{S} \cdot \bar{S}| <q^{C \epsilon}|\bar{S}|,
\end{equation}
and there exists some ${m'}<2\epsilon \startingpower$ such that
\begin{equation}
\label{B7.38.1}
|\mathfrak{p}^{m'} \bar{S}|=|\bar{S}|,
\end{equation}
\begin{equation}
\label{B7.17.1}
\phi(x)>q^{-2\epsilon}\phi(0) \text{ for all } x\in \mathfrak{p}^{\smallerpower^\prime} \bar{S}
\end{equation}
and
\begin{equation}
\label{B7.8.2}  \left(\mu * \mu_{-}\right)\left(\mathfrak{p}^{\smallerpower^{\prime}} \bar{S}\right)>q^{-C\epsilon}.
\end{equation}
\end{itemize}
\end{proposition}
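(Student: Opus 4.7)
The plan is to turn the Fourier hypothesis in (i) into a physical-space statement about $\phi = q(\mu*\mu_-)$, and then to stratify the level sets of $\phi$ by $\mathfrak{p}$-adic valuation. Using $\widehat{\mu*\mu_-}(\xi) = |\hat\mu(\xi)|^2$ together with $\widehat{\mu*\mu_-}(y\xi) = \widehat{T_y(\mu*\mu_-)}(\xi)$, where $T_y\psi(w) = \sum_{z:\,zy=w}\psi(z)$, Parseval gives the identity
\[
\sum_{\xi}|\hat\mu(\xi)|^2|\hat\mu(y\xi)|^2 \;=\; q\sum_z (\mu*\mu_-)(z)\,(\mu*\mu_-)(zy).
\]
Weighting by $\mu(y)$ and summing, the failure of \eqref{B7.6.1} becomes $\sum_w \phi(w)(\mu\otimes\phi)(w) \geq q^{1-\tau}\phi(0)$. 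We will also use $\|\phi\|_1 = q$, $\phi\le \phi(0)$, and $\sum_{\xi}|\hat\mu(\xi)|^2 = \phi(0)$.

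Next, dyadically partition $T_j = \{x\colon 2^{-j-1}\phi(0) < \phi(x)\le 2^{-j}\phi(0)\}$ and pigeonhole the contributions $2^{-j}\phi(0)(\mu\otimes\phi)(T_j)$ to the failed inequality. Since $\tau\ll\epsilon$, this produces some $j^*$ with $2^{j^*}\leq q^{2\epsilon}$, with $\phi > q^{-2\epsilon}\phi(0)/2$ on $T_{j^*}$, and with $(\mu\otimes\phi)(T_{j^*}) \gtrsim 2^{j^*}q^{1-\tau}/\log q$. Further stratify $T_{j^*}$ by $\mathfrak{p}$-adic valuation $v(x)$ and select a dominant stratum $m^*\in[0,n]$. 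If $m^*$ is of order $\epsilon n$ or larger, then splitting the $\mu$- and $\phi$-variables according to their valuations in $(\mu\otimes\phi)(\mathfrak p^{m^*}\cO/\cP^n)$ forces either $\mu(\mathfrak p^{m^*/2})$ or $(\mu*\mu_-)(\mathfrak p^{m^*/2})=\|\pi_{m^*/2}\mu\|_2^2$ to be nontrivially large, and from $\|\pi_m\mu\|_\infty \ge \|\pi_m\mu\|_2^2$ one reads off \eqref{B7.6.2}; this is alternative (ii). Otherwise $m':=m^* < 2\epsilon n$, and the set $\bar S := \mathfrak p^{-m'}(T_{j^*}^{(m^*)}) \subset (\cO/\cP^n)^*$ satisfies \eqref{B7.38.1} and \eqref{B7.17.1} by construction, while \eqref{B7.6} follows from $|\bar S|\leq 2^{j^*+1}q/\phi(0)$ and \eqref{B7.8.2} from combining the $\phi$-lower bound on $\mathfrak p^{m'}\bar S$ with the pigeonhole estimate on $(\mu\otimes\phi)(T_{j^*}^{(m^*)})$.

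The substantive remaining task is the sum-product doubling \eqref{B7.7}. On the additive side, the level set $\mathfrak p^{m'}\bar S$ of the self-convolution $\phi$ inherits large additive energy, $E^{\boxplus}(\mathfrak p^{m'}\bar S)\gtrsim q^{-O(\epsilon)}|\bar S|^3$, so Balog--Szemer\'edi--Gowers (Theorem \ref{com1}) extracts a subset $\bar S'\subset\bar S$ with $|\bar S'|\ge q^{-O(\epsilon)}|\bar S|$ and $|\bar S'-\bar S'|\leq q^{O(\epsilon)}|\bar S'|$. On the multiplicative side, the concentration $(\mu\otimes\phi)(\mathfrak p^{m'}\bar S)\gtrsim q^{-O(\epsilon)}$ encodes an approximate multiplicative invariance of $\bar S$ under the $\mu$-action; a pigeonhole in the $\mu$-variable produces a typical $y$ for which $y\bar S$ lies largely within the level set, and a multiplicative Balog--Szemer\'edi--Gowers argument combined with Lemma \ref{com2} yields $|\bar S'\cdot\bar S'|\leq q^{O(\epsilon)}|\bar S'|$. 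The main obstacle is compatibility: the additive and multiplicative refinements select from $\bar S$ on different criteria, so one must interleave them and verify that the final subset retains \eqref{B7.38.1}, \eqref{B7.17.1}, \eqref{B7.8.2} simultaneously with \eqref{B7.7}; the presence of zero-divisors in $\cO/\cP^n$ further complicates the multiplicative estimates, which is precisely why the $\mathfrak{p}$-adic stratification is carried out \emph{before} any additive combinatorial refinement.
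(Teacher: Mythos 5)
Your reduction to physical space, the valuation stratification, and the level-set/threshold analysis of $\phi$ all match the paper's strategy, and your route to alternative (ii) is essentially the paper's argument. But there are two genuine gaps. First, your multiplicative energy bound is under-justified: ``a pigeonhole in the $\mu$-variable produces a typical $y$ for which $y\bar S$ lies largely within the level set'' gives only one (or a $\mu$-positive-measure set of) such $y$, whereas to conclude $E^{\boxtimes}\gtrsim q^{-O(\epsilon)}|\bar S|^3$ you need a set $\Lambda$ of such $y$'s with \emph{cardinality} $|\Lambda|\gtrsim q^{-O(\epsilon)}|\bar S|$. Converting $\mu(\Lambda)>q^{-4\epsilon}$ into a cardinality lower bound is not automatic; it requires Cauchy--Schwarz together with Parseval ($\sum_x\mu(x)^2=\phi(0)/q$) and the upper bound $|S|<q^{1+2\epsilon}\phi(0)^{-1}$, as in the paper's step leading to $|\Lambda|>q^{-10\epsilon}|S|$. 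Without this, the multiplicative Balog--Szemer\'edi--Gowers step has no input.

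Second, and more seriously, you name the compatibility of the additive and multiplicative refinements as ``the main obstacle'' and then say one ``must interleave them and verify'' --- but you never do, and this is precisely the content of the proposition. A naive intersection of the two BSG outputs can be small, so the claim does not follow from running the two arguments in parallel. The paper resolves this by a specific ordering: apply multiplicative BSG first to the unit-normalized set $S_*$ to get $S_*'$ with $|S_*'\cdot S_*'|\le q^{O(\epsilon)}|S_*'|$; then pigeonhole over the at most $2\epsilon n$ valuation levels to extract $S''_*\subset S_*'$ with $\mathfrak p^{m'}S''_*\subset S$ for a single $m'$; then apply additive BSG to $S'=\mathfrak p^{m'}S''_*$ to get $S''$ and set $\bar S=\mathfrak p^{-m'}S''\subset S_*'$. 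The sum-set bound for $\bar S$ comes from the final additive BSG (up to a factor $p^{d_0m'}\le q^{2\epsilon}$ from undoing the multiplication by $\mathfrak p^{m'}$), while the product-set bound is \emph{inherited} from the superset: $|\bar S\cdot\bar S|\le|S_*'\cdot S_*'|$, and this is still $\le q^{O(\epsilon)}|\bar S|$ because each refinement loses only a $q^{O(\epsilon)}$ factor in cardinality. Monotonicity of the product set under passing to subsets, combined with the chain of cardinality lower bounds, is the mechanism that makes both doubling bounds hold for one set; your proposal leaves this entire mechanism unspecified.
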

\begin{proof}[Proof of Proposition \ref{1441}]
Assume $(\ref{B7.6.1})$ fails, so that
\begin{equation}
\label{B7.9} \sum_{y \in \cO/\cP^n}  \sum_{\xi \in \hat{\cO/\cP^n}} |\hat{\mu}(\xi)|^{2}|\hat{\mu}(y \xi)|^{2} \mu(y) \geq q^{-\tau} \sum_{\xi \in \hat{\cO/\cP^n}}|\hat{\mu}(\xi)|^{2}
\end{equation}
Fix $x\in \cO/\cP^n$. We compute
\begin{align}
\label{B7.10.1}
\sum_{\xi\in \hat{\cO/\cP^n}}|\hat{\mu}(\xi)|^{2} \xi(x)&=\sum_{\xi\in \hat{\cO/\cP^n}}\left(\sum\limits_{t_1\in \cO/\cP^n} \xi(t_1)\mu(t_1)\right)\left(\overline{\sum\limits_{t_2\in \cO/\cP^n} \xi(t_2)\mu(t_2)}\right) \xi(x) \nonumber \\
&=\sum_{\chi\in \hat{\cO/\cP^n}}\sum_{ t_1, t_2\in \cO/\cP^n}\xi(t_1-t_2+x)\mu(t_1)\mu(t_2).
\end{align}

We have 
\begin{equation}
\label{B7.10.2}
\sum_{t\in \cO/\cP^n}\xi(t)=\begin{cases}0, &\text{ if }\xi\neq 0\\
q, &\text{ if }\xi= 0.\end{cases}\end{equation}
Therefore, by $(\ref{B7.10.1})$ and $(\ref{B7.10.2})$ we have
\begin{align}
\label{B7.10}
\sum_{\xi\in \hat{\cO/\cP^n}}|\hat{\mu}(\xi)|^{2} \xi(x)&=\sum_{t_1\in \cO/\cP^n}q\mu(t_1)\mu(t_1+x)\nonumber\\
&=q\sum_{\substack{y, z\in \cO/\cP^n\\z-y=x}}\mu(y)\mu(z)=q\sum_{\substack{y, z\in \cO/\cP^n\\z+y=x}}\mu(-y)\mu(z)\nonumber\\
&=q\left(\mu * \mu_{-}\right)(x)=\phi(x).
\end{align}
It follows that
\begin{equation}
\label{B7.11}
\phi(0)=\sum_{\xi\in \widehat{\cO/\cP^n}}|\hat{\mu}(\xi)|^{2}=\max _{x\in \cO/\cP^n} \phi(x),
\end{equation}
 and
\begin{align*}
\sum_{x, y \in \cO/\cP^n} \phi(x) \phi(x y) \mu(y)&=\sum_{x, y \in \cO/\cP^n}\left(\sum_{\xi_1\in \hat{\cO/\cP^n}}|\hat{\mu}(\xi_1)|^{2} \xi_1(x)\right)\left(\sum_{\xi_2\in \hat{\cO/\cP^n}}|\hat{\mu}(\xi_2)|^{2} \xi_2(xy)\right)\mu(y)\\
&=\sum_{x, y\in \cO/\cP^n}\sum_{\xi_1, \xi_2 \in \hat{\cO/\cP^n}}|\hat{\mu}(\xi_1)|^{2}|\hat{\mu}(\xi_2)|^{2}(\xi_1+y\xi_2)(x)\mu(y)\\
&=q\sum_{y\in \cO/\cP^n}\sum_{\xi_2 \in \hat{\cO/\cP^n}}|\hat{\mu}(y\xi_2)|^{2}|\hat{\mu}(\xi_2)|^{2}\mu(y)
\end{align*}

Hence, $(\ref{B7.9})$ may be reformulated as
\begin{equation}
\label{B7.12}
\sum_{x, y \in \cO/\cP^n} \phi(x) \phi(x y) \mu(y) \geq q^{1-\tau} \phi(0).
\end{equation}

Write $\cO/\cP^n= \sqcup_{0\leq v\leq n}R_{v}$, where 
\begin{equation}\label{1129}
R_v=\begin{cases}\{x\in \cO/\cP^n: \fp^v|| x\}&\text{ if }v\leq n\\ \{0\}&\text{ if }v=n\end{cases}.
\end{equation}
  There are $0 \leq v_{1}\leq \startingpower, 0 \leq v_{2}\leq \startingpower$ such that
\begin{equation}
\label{B7.14}
\sum_{x \in R_{v_{1}}, y \in R_{v_{2}}} \phi(x) \phi(x y) \mu(y)>\frac{1}{\startingpower^{2}} q^{1-\tau} \phi(0)>q^{1-\tau-} \phi(0) .
\end{equation}
Here for notational simplicity, we use Bourgain's notation.  The second inequality of \eqref{B7.14} means $\frac{1}{n^2}q^{1-\tau}\phi(0)>q^{1-\tau-\varepsilon}\phi(0)$ for arbitrarily small $\varepsilon$ when $q$ sufficiently large. 

Assume that $(\ref{B7.6.2})$ also fails. Thus
\begin{equation}
 \label{B7.15}  
\max\limits_{x \in \cO/\cP^n} \mu\left(\pi_{\smallerpower}^{-1}(x)\right)\leq c p^{-2d_0\smallerpower \tau / \epsilon} \text{ for all }  \smallerpower>\epsilon \startingpower
\end{equation}
Since $(\ref{B7.14})$ implies that
\begin{align*}
\phi(0)\mu(R_{v_{2}})q &{\geq} \sum_{x \in R_{v_{1}}} \phi(x) \phi(0)\mu(R_{v_{2}})\geq\sum_{x \in R_{v_{1}}, y \in R_{v_{2}}} \phi(x) \phi(0) \mu(y)\\
&\geq \sum_{x \in R_{v_{1}}, y \in R_{v_{2}}} \phi(x) \phi(x y) \mu(y)>q^{1-\tau-} \phi(0),
\end{align*}
we have
\begin{equation}
\label{B7.15.1}
\mu\left(\pi_{v_2}^{-1}(0)\right)\geq \mu\left(R_{v_{2}}\right) > q^{-\tau-}>cq^{-2\tau}=c p^{-2d_0\startingpower\tau}
\end{equation}
for sufficiently large $q$. If $v_2>\epsilon \startingpower$, then $c p^{-2d_0\startingpower\tau}>c p^{-2d_0\tau v_2/\epsilon}$ and 
$(\ref{B7.15.1})$ will contradict $(\ref{B7.15})$. Thus we know $v_2\leq \epsilon \startingpower.$
Also,
\begin{align}
\label{B7.15.2}
q^{1-\tau-}&<\sum_{x \in R_{v_{1}}} \phi(x)=q\left(\mu * \mu_{-}\right)\left(R_{v_{1}}\right)=q\sum\limits_{\substack{y,z\\y-z\in R_{v_1}}}\mu(y)\mu(z) \\&=q\sum\limits_{x \in \cO/\cP^{v_1}}\mu\left(\pi_{v_1}^{-1}(x)\right)^2\nonumber \leq q \cdot \max_{x \in \cO/\cP^{v_1}} \mu\left(\pi_{v_1}^{-1}(\xi)\right).\end{align}
Assume $v_{1}>\epsilon \startingpower$. It follows from $(\ref{B7.15})$ and $(\ref{B7.15.2})$ that
$$
q^{-\tau-}<\max_{\xi \in \cO/\cP^{v_1}} \mu\left(\pi_{v_1}^{-1}(\xi)\right)<c p^{-2d_0v_1 \tau / \epsilon}<cp^{-2d_0\tau \startingpower}=cq^{-2\tau},
$$
which leads to a contradiction for large $q$. Therefore,
\begin{equation}
\label{B7.16} 
v_{1}, v_{2} \leq \epsilon \startingpower.
\end{equation}
Notice that for $y \in R_{v_2}, x_1, x_2\in \cO/\cP^n$, we have
\begin{equation}
\label{B7.16.1}
{ x_1y=x_2y \Rightarrow \mathfrak p^{v_2} || x_1-x_2.}
\end{equation}
Therefore, at most $p^{d_0v_2}$ different $x_i$ give the same $x_iy$. By $(\ref{B7.11.1}), (\ref{B7.16})$, for $y \in R_{v_2}$ we have
\begin{equation}
\label{B7.16.2}\sum_{x \in \cO/\cP^n} \phi(x y) \leq p^{d_0v_{2}} \sum_{x \in \cO/\cP^n} \phi(x) \leq q^{1+\epsilon}.
\end{equation}
\\
We proceed with the proof of Proposition \ref{1441} by defining 
\begin{equation}
\label{B7.17}
S=\{x \in \cO/\cP^n|\phi(x)>q^{-2\epsilon}\phi(0)\}.
\end{equation}
Recalling $(\ref{B7.10.0})$, we know
\begin{equation}
\label{B7.11.1}
\sum_{x \in \cO/\cP^n} \phi(x)=q.
\end{equation}
It follows from $(\ref{B7.11.1})$ that
\begin{equation}
\label{B7.18}
|S|<q^{1+2\epsilon}\phi(0)^{-1}.
\end{equation}
By (\ref{B7.14}), 
\begin{align*}
q^{1-\tau-} \phi(0)&< \sum_{x \in R_{v_{1}}, y \in R_{v_{2}}} \phi(x) \phi(x y) \mu(y)\\
&=(\sum_{x \in R_{v_{1}}\cap S, y \in R_{v_{2}}, xy\in S}+\sum_{x \in R_{v_{1}}\cap S, y \in R_{v_{2}}, xy\notin S}+\sum_{x \in R_{v_{1}}, x \notin S, y \in R_{v_{2}}} )
\left(\phi(x) \phi(x y) \mu(y)\right)\\
&\leq\sum_{x \in R_{v_{1}}\cap S, y \in R_{v_{2}}, xy\in S}\phi(x) \phi(x y) \mu(y)\\
&+\sum_{x \in R_{v_{1}}\cap S, y \in R_{v_{2}}, xy\notin S}q^{-2\epsilon}\phi(0)\phi(x)\mu(y)+\sum_{x \in R_{v_{1}}, x \notin S, y \in R_{v_{2}}}q^{-2\epsilon}\phi(0)\phi(xy)\mu(y)\\
&\leq \sum_{x \in R_{v_{1}}\cap S, y \in R_{v_{2}}, xy\in S}\phi(x) \phi(xy) \mu(y)\\
&+q^{-2\epsilon}\phi(0)\sum_{x \in \cO/\cP^n, y \in R_{v_{2}}}\phi(x) \mu(y)+q^{-2\epsilon}\phi(0)\sum_{x \in \cO/\cP^n, y \in R_{v_{2}}}\phi(xy) \mu(y)\\
&\leq \sum_{x \in R_{v_{1}}\cap S, y \in R_{v_{2}}, xy\in S}\phi(x) \phi(x y) \mu(y)+q^{-2\epsilon}\phi(0)(q^{1+\epsilon}+q),
\end{align*}
Hence, by $\epsilon>\tau$, 
\begin{align}
\quad|S| \geq\left|S \cap R_{v_{1}}\right| &\geq \sum\limits_{y \in R_{v_{2}}}\bigg|\left\{x: x \in S \cap R_{v_{1}}, x y \in S\right\}\bigg| \mu(y)\nonumber\\
&=\phi(0)^{-2}\sum\limits_{x \in R_{v_{1}}\cap S, y \in R_{v_{2}}, xy\in S}\phi(0)^{2}\mu(y)\nonumber\\
&\geq\phi(0)^{-2} \sum_{x \in R_{v_{1}}\cap S, y \in R_{v_{2}}, xy\in S}\phi(x) \phi(x y) \mu(y)\nonumber\\
&>\phi(0)^{-2}(q^{1-\tau-} \phi(0)-q^{-2\epsilon}\phi(0)(q^{1+\epsilon}+q))\nonumber\\
&>q^{1-2 \tau} \phi(0)^{-1}.\label{B7.19}
\end{align}
Next, defining
\begin{equation}
\label{B7.20}
\Lambda=\left\{y \in R_{v_{2}}\bigg|\left|\left\{x \in S \cap R_{v_{1}}: x y \in S\right\} \right|\ > q^{1-3\tau} \phi(0)^{-1}\right\},
\end{equation}
we have by $(\ref{B7.19})$ that
\begin{align*}
q^{1-2 \tau} \phi(0)^{-1}&<\sum\limits_{y \in R_{v_{2}}}\bigg|\left\{x \in S \cap R_{v_{1}}, x y \in S\right\}\bigg| \mu(y)\\
&=\sum\limits_{y \in \Lambda}\bigg|\left\{x \in S \cap R_{v_{1}}, x y \in S\right\}\bigg| \mu(y)+\sum\limits_{y \in R_{v_{2}}, y\notin \Lambda}\bigg|\left\{x \in S \cap R_{v_{1}}, x y \in S\right\}\bigg| \mu(y)\\
&\leq |S|\mu(\Lambda)+q^{1-3\tau} \phi(0)^{-1},
\end{align*}
and it follows that 
\begin{equation}
\label{B7.21.1}
\mu(\Lambda)|S|>q^{1-2 \tau} \phi(0)^{-1}-q^{1-3\tau} \phi(0)^{-1}>q^{1-2 \tau-} \phi(0)^{-1}>q^{1-2 \epsilon} \phi(0)^{-1}>q^{-4 \epsilon}|S|
\end{equation}
by $(\ref{B7.18})$. Hence
\begin{equation}
\label{B7.21}
\mu(\Lambda)>q^{-4 \epsilon}.
\end{equation}
On the other hand, by Cauchy-Schwarz inequality and Parseval identity
$$
\begin{aligned}
q^{-4 \epsilon} &<\mu(\Lambda)=\sum_{x \in \Lambda} \mu(x) \leq|\Lambda|^{1 / 2}\left(\sum_{x \in \Lambda} \mu(x)^{2}\right)^{1 / 2}\leq|\Lambda|^{1 / 2}\left(\sum_{x \in \cO/\cP^n} \mu(x)^{2}\right)^{1 / 2}\\
&=\left(\frac{|\Lambda| \phi(0)}{q}\right)^{1 / 2}\stackrel{(\ref{B7.18})}{<}\left(q^{2 \epsilon} \frac{|\Lambda|}{|S|}\right)^{1 / 2}
\end{aligned}
$$
implying
\begin{equation}
\label{B7.22}
|\Lambda|>q^{-10 \epsilon}|S| .
\end{equation}

Recalling $(\ref{B7.20})$, take $\Lambda_{*} \subset (\cO/\cP^n)^{*}$ such that
\begin{equation}
\label{B7.23}
\Lambda=\mathfrak{p}^{v_{2}} \Lambda_{*} \text { and }|\Lambda|=\left|\Lambda_{*}\right| .
\end{equation}
We also let
\begin{equation}
\label{B7.24}
S_{*}=\{x \in (\cO/\cP^n)^{*} \mid \mathfrak{p}^{n^\prime} x \in S \text{ for some } n^{\prime}\leq 2\epsilon \startingpower \}.
\end{equation}

On one hand, by the same argument as $(\ref{B7.16.1})$, we know for $x\in R_v$, then $x=\mathfrak{p}^v x_0$ for at most $p^{d_0v}$ different choices of $x_0\in (\cO/\cP^n)^*$. It follows that
\begin{equation}
\label{B7.25.1}
\left|S_{*}\right| \leq \sum_{v=0}^{[ 2\epsilon \startingpower]}p^{d_0v} |S\cap R_v|\leq p^{2\epsilon \startingpower} \sum_{v=0}^{[ 2\epsilon \startingpower]} |S\cap R_v|\leq q^{2\epsilon}|S|.
\end{equation}

On the other hand, from $(\ref{B7.18})$ and $(\ref{B7.19})$ we obtain
\begin{equation}
\label{B7.25.2}
\left|S_{*}\right|{>}q^{-4 \epsilon}|S|.
\end{equation}
Hence the size of $S$ and $S_*$ differ up to a small power of $q$:
\begin{equation}
\label{B7.25}
q^{-4 \epsilon}|S|<\left|S_{*}\right|\leq q^{2\epsilon}|S|.
\end{equation}

We apply Theorem \ref{com1} to estimate $E^\boxtimes(S_*)$ the multiplicative energy of $S_*$.  We first estimate $\left|S_{*} \cap y^{-1} S_{*}\right|$ for a fixed $y\in \Lambda_*$.  By $(\ref{B7.20})$ and  $(\ref{B7.23})$, 
\begin{equation}
\label{B7.26.1}
\bigg|\left\{x \in S \cap R_{v_{1}}: x \mathfrak{p}^{v_2}y \in S\right\} \bigg|\ > q^{1-3\tau} \phi(0)^{-1}.
\end{equation}
For $x\in S \cap R_{v_{1}}, x \mathfrak{p}^{v_2}y \in S$, write $x=\mathfrak{p}^{v_1}x_0$ for $x_0\in (\cO/\cP^n)^*$, then $\mathfrak{p}^{v_1+v_2}x_0y \in S$ with $x_0y\in (\cO/\cP^n)^*$ and $v_1+v_2\leq 2\epsilon \startingpower$ by $(\ref{B7.16})$. Therefore, $x_0\in S_*$, $x_0y\in S_*$ and thus $x_0\in S_{*} \cap y^{-1} S_{*}$. It is clear that different $x\in S \cap R_{v_{1}}$ with $x \mathfrak{p}^{v_2}y \in S$ correspond to different $x_0\in S_{*} \cap y^{-1} S_{*}$.  \par 
By $(\ref{B7.26.1})$, $(\ref{B7.18})$ and $(\ref{B7.25.1})$, we know
\begin{equation}
\label{B7.26}
\left|S_{*} \cap y^{-1} S_{*}\right|>q^{1-3 \tau} \phi(0)^{-1}>q^{-3 \tau-2\epsilon}|S|>q^{-5 \epsilon}\left|S\right| \geq q^{-7 \epsilon}\left|S_{*}\right| 
\end{equation}
for $y\in \Lambda_*$. \par
Define for $z\in (\cO/\cP^n)^*$,
\begin{equation}
e^\boxtimes(z)=e_{S_*}^\boxtimes(z)=\left|\{(a,b)\in S_*\times S_*\big |ab^{-1}=z\}\right|.
\end{equation}
Then
\begin{align}
\label{B7.26.2}
E^{\boxtimes}(S_*)&=\left|\left\{\left(a_{1}, a_{2}, a_{3}, a_{4}\right) \in S_*^{4}: a_{1}a_{2}=a_{3}a_{4}\right\}\right|\nonumber\\
&=\left|\left\{\left(a_{1}, a_{2}, a_{3}, a_{4}\right) \in S_*^{4}: a_{1}a_{3}^{-1}=a_{4}a_{2}^{-1}\right\}\right|\nonumber\\
&=\sum_{z\in (\cO/\cP^n)^*}e^\boxtimes(z)^2.
\end{align}
Observe that for $x\in S_{*} \cap y^{-1} S_{*}$, we have $x\in S_{*}$, $xy\in S_{*}$, and thus $(xy, x)$ gives a solution to $ab^{-1}=y$. It's clear that different $x$ correspond to different $(xy, x)$, so 
\begin{equation}
e^\boxtimes(y)\geq \left|S_{*} \cap y^{-1} S_{*}\right|> q^{-7 \epsilon}\left|S_{*}\right|.
\end{equation}
Therefore, by $(\ref{B7.26.2})$, $(\ref{B7.23})$, $(\ref{B7.22})$ and $(\ref{B7.25.1})$
\begin{align} \nonumber
E^\boxtimes(S_{*})&\geq \sum_{z\in (\cO/\cP^n)^*}e^\boxtimes(z)^2\geq \sum_{y\in \Lambda^*}e^\boxtimes(y)^2> |\Lambda^*|(q^{-7 \epsilon}\left|S_{*}\right|)^2\\ &\geq q^{-24\epsilon}\left|S_{*}\right|^2\left|S\right|\geq q^{-26\epsilon}\left|S_{*}\right|^3.
\end{align}
Then we can apply Theorem \ref{com1} to the set $S_{*} \subset (\cO/\cP^n)^{*}$ by taking some $\delta>q^{-26\epsilon}$ in $(\ref{B7.26.3})$ and obtain $S_{*}^{\prime} \subset S_{*}$ such that
\begin{align}
\label{B7.27.2}
\left|S_{*}^{\prime}\right| &=\Omega(\delta\left|S_{*}\right|), \\
\label{B7.28.2}
\left|S_{*}^{\prime} \cdot (S_{*}^{\prime})^{-1}\right| &=O(\delta^{-4}\left|S_{*}^{\prime}\right|).
\end{align}
Then from $(\ref{B7.28.2})$, together with an application of Lemma \ref{com2} to $A=S_*'$, $B=(S_*')^{-1}$, $k=l=1$, we get 

\begin{equation}
\label{B7.28.3}
\left|S_{*}^{\prime} \cdot S_{*}^{\prime}\right| =O(\delta^{-8}\left|S_{*}^{\prime}\right|).
\end{equation}
In other words, 
\begin{align}
\left|S_{*}^{\prime}\right| &>q^{-27 \epsilon}\left|S_{*}\right|, \label{B7.27}\\
\left|S_{*}^{\prime} \cdot S_{*}^{\prime}\right| &<q^{209 \epsilon}\left|S_{*}^{\prime}\right|\label{B7.28}
\end{align}
for sufficiently large $q$. \par
Take a large subset $S_*''$ of $S_*'$ such that\begin{equation}
\label{B7.29}
S^{\prime}=\mathfrak{p}^{m^{\prime}} S_{*}'' \subset S, \quad \text { for some } m'\leq 2 \epsilon \startingpower,
\end{equation}
and
\begin{equation}
\label{B7.30}
\left|S^{\prime}\right|=|S_*''|\geq \frac{1}{2\epsilon \startingpower}\left|S_{*}^{\prime}\right|>q^{-28 \epsilon}|S| .
\end{equation}

Next, we pass to the additive property.
From $(\ref{B7.10.0})$ and $(\ref{B7.17})$,
$$
q \cdot \sum_{x \in S^{\prime}} \sum_{y\in \cO/\cP^n} \mu(x+y) \mu(y)=\sum_{x \in S^{\prime}}\phi(x)>q^{-2 \epsilon} \phi(0)\left|S^{\prime}\right|.
$$
Hence, by Cauchy-Schwarz inequality and Parseval's identity,
\begin{equation}
\label{B7.31}
\sum_{y\in \cO/\cP^n}\left(\sum_{x \in S^{\prime}} \mu(x+y)\right)^{2}>\frac{\left(q^{-1-2 \epsilon} \phi(0)\left|S^{\prime}\right|\right)^{2}}{\sum\limits_{y\in \cO/\cP^n} \mu(y)^{2}}=\frac{q^{-1-4 \epsilon} \phi(0)^2\left|S^{\prime}\right|^{2}}{\sum\limits_{\xi\in \hat{\cO/\cP^n}} |\hat{\mu}(\xi)|^{2}}=q^{-1-4 \epsilon} \phi(0)\left|S^{\prime}\right|^{2} .
\end{equation}
The left-hand-side of $(\ref{B7.31})$ equals
$$
\sum_{x_{1}, x_{2} \in S^{\prime}} \sum_{y\in \cO/\cP^n} \mu\left(x_{1}+y\right) \mu\left(x_{2}+y\right)=q^{-1} \sum_{x_{1}, x_{2} \in S^{\prime}} \phi\left(x_{1}-x_{2}\right) .
$$
Hence
\begin{equation}
\label{B7.32}
\sum_{x_{1}, x_{2} \in S^{\prime}} \phi\left(x_{1}-x_{2}\right)>q^{-4 \epsilon} \phi(0)\left|S^{\prime}\right|^{2}.
\end{equation}
Define
\begin{equation}
\label{B7.33}
\tilde{S}=\left\{x \in \cO/\cP^n \mid \phi(x)>\frac{1}{2}q^{-4 \epsilon} \phi(0)\right\}\supset S.
\end{equation}
We again see from $(\ref{B7.11.1})$ that
\begin{equation}
\label{B7.34}
|\tilde{S}|<q^{1+4 \epsilon} \phi(0)^{-1} \stackrel{(\ref{B7.19})}{<}q^{6\epsilon}|S| \stackrel{(\ref{B7.30})}{<}q^{34 \epsilon}\left|S^{\prime}\right|.
\end{equation}
Assume that
\begin{equation}
\label{B7.35.1}
\left|\left\{\left(x_{1}, x_{2}\right) \in S^{\prime} \mid x_{1}-x_{2} \in \tilde{S}\right\}\right|\leq\frac{1}{2}q^{-4 \epsilon}\left|S^{\prime}\right|^{2}.
\end{equation}
We compute
\begin{align}
&\sum_{x_{1}, x_{2} \in S^{\prime}} \phi\left(x_{1}-x_{2}\right)=\sum_{\substack{x_{1}, x_{2} \in S^{\prime},\\x_{1}-x_{2} \in \tilde{S}}}\phi\left(x_{1}-x_{2}\right)+\sum_{\substack{x_{1}, x_{2} \in S^{\prime},\\x_{1}-x_{2} \notin \tilde{S}}}\phi\left(x_{1}-x_{2}\right)\nonumber\\
\leq & \left|\left\{\left(x_{1}, x_{2}\right) \in S^{\prime} \mid x_{1}-x_{2} \in \tilde{S}\right\}\right|\cdot \phi(0)+\left|\left\{\left(x_{1}, x_{2}\right) \in S^{\prime} \mid x_{1}-x_{2} \notin \tilde{S}\right\}\right|\cdot \frac{1}{2}q^{-4 \epsilon}\phi(0)\nonumber\\
\leq & \frac{1}{2}q^{-4 \epsilon}\left|S^{\prime}\right|^{2}\cdot\phi(0)+\left|S^{\prime}\right|^{2}\cdot\frac{1}{2}q^{-4 \epsilon}\phi(0)\nonumber\nonumber\\
= & q^{-4 \epsilon} \phi(0)\left|S^{\prime}\right|^{2},
\end{align}
which contradicts $(\ref{B7.32})$. Therefore,
\begin{equation}
\label{B7.35}
\left|\left\{\left(x_{1}, x_{2}\right) \in S^{\prime} \mid x_{1}-x_{2} \in \tilde{S}\right\}\right|>\frac{1}{2}q^{-4 \epsilon}\left|S^{\prime}\right|^{2}.
\end{equation}
Note that $(\ref{B7.34})$ and $(\ref{B7.35})$ suggest that we can apply the Balog-Szemer\'edi-Gowers theorem to $S^\prime$ as a subset of $\cO/\cP^n$ as an additive group.

Let $e^\boxplus(z)=e_{S'}^\boxplus(z)=\{(x_1,x_2)\in S': x_1-x_2=z\}$.  By $(\ref{B7.26.2})$ and Cauchy-Schwarz inequality, the additive energy
\begin{align}
E^\boxplus(S^\prime)&= \sum_{z\in \cO/\cP^n}e^\boxplus(z)^2\geq \sum_{z\in \tilde{S}}e^\boxplus(z)^2\geq \frac{1}{|\tilde{S}|}\left(\sum\limits_{z\in \tilde{S}}e^\boxplus(z)\right)^2\\&=\frac{1}{|\tilde{S}|}\left|\left\{\left(x_{1}, x_{2}\right) \in S^{\prime} \mid x_{1}-x_{2} \in \tilde{S}\right\}\right|^2\nonumber\\
&\stackrel{(\ref{B7.35})}{>}\frac{1}{|\tilde{S}|}\left(\frac{1}{2}q^{-4 \epsilon}\left|S^{\prime}\right|^{2}\right)^2=\frac{1}{4|\tilde{S}|}q^{-8 \epsilon}\left|S^{\prime}\right|^{4}\nonumber\\
&\stackrel{(\ref{B7.34})}{>}\frac{1}{4q^{34\epsilon}|S^{\prime}|}q^{-8 \epsilon}\left|S^{\prime}\right|^{4}>q^{-43 \epsilon}\left|S^{\prime}\right|^{3}
\end{align}
for sufficiently large $q$. Hence $(\ref{B7.26.3})$ is satisfied with $A=S'$ and some $\delta=q^{-43\epsilon}$. Theorem \ref{com1} gives $S^{\prime \prime} \subset S^{\prime}$ satisfying
\begin{equation}
\label{B7.36}
\left|S^{\prime \prime}\right|>q^{-44 \epsilon}\left|S^{\prime}\right|\stackrel{(\ref{B7.30})}{>}q^{-76 \epsilon}|S|
\end{equation}
and
\begin{equation}
\label{B7.37}
\left|S^{\prime \prime}+S^{\prime \prime}\right|<q^{345 \epsilon}\left|S^{\prime \prime}\right|.
\end{equation}
Recalling $(\ref{B7.29})$, take a subset $\bar{S} \subset S_{*}^{\prime}$ such that
\begin{equation}
\label{B7.38}
|\bar{S}|=\left|S^{\prime \prime}\right| \quad \text { and } \quad S^{\prime \prime}=\mathfrak{p}^{\smallerpower^{\prime}} \bar{S},
\end{equation}
which is $(\ref{B7.38.1})$.
By the same argument as $(\ref{B7.16.1})$, we know
\begin{equation}
|\bar{S}+\bar{S}| \leq p^{d_0\smallerpower^{\prime}}\left|S^{\prime \prime}+S^{\prime \prime}\right|.
\end{equation}
Since $\smallerpower^{\prime}\leq 2 \epsilon \startingpower$,
\begin{equation}
\label{B7.39}
|\bar{S}+\bar{S}| \leq q^{2 \epsilon}\left|S^{\prime \prime}+S^{\prime \prime}\right| \stackrel{(\ref{B7.37})}{<} q^{347 \epsilon}|\bar{S}|.
\end{equation}
From $(\ref{B7.28})$,
\begin{equation}
\label{B7.40}
|\bar{S} . \bar{S}| \leq\left|S_{*}^{\prime} . S_{*}^{\prime}\right|<q^{209 \epsilon}|S_*^{\prime}|\leq q^{209 \epsilon}|S_*|\stackrel{(\ref{B7.25.1})}{<}q^{211 \epsilon}|S| \stackrel{(\ref{B7.36})}{<}q^{287 \epsilon}|\bar{S}| .
\end{equation}
Thus $\bar{S} \subset (\cO/\cP^n)^{*}$ satisfies $(\ref{B7.7})$ with $C>347$.
Since $|\bar{S}| = \left|S^{\prime \prime}\right| \leq \left|S^{\prime }\right| \leq |S|,(\ref{B7.11}),(\ref{B7.18})$ imply
\begin{equation}
\label{B7.41}
|\bar{S}|\left(\sum_{\xi \in \hat {\cO/\cP^n}}|\hat{\mu}(\xi)|^{2}\right) \leq|S| \phi(0)<q^{1+2 \epsilon},
\end{equation}
which is $(\ref{B7.6})$.
By $(\ref{B7.29})$,
$$
\mathfrak{p}^{\smallerpower^{\prime}} \bar{S}=S^{\prime \prime}\subset S^{\prime}= \mathfrak{p}^{\smallerpower^{\prime}} S_{*}^{\prime} \subset S,
$$
which gives $(\ref{B7.17.1})$ by $(\ref{B7.17})$. \par
Finally,
\begin{align}
\label{B7.8.1}
\left(\mu * \mu_{-}\right)\left(\mathfrak{p}^{\smallerpower^{\prime}} \bar{S}\right) \geq\left|\mathfrak{p}^{\smallerpower^{\prime}} \bar{S}\right| \cdot q^{-1-2 \epsilon} \phi(0)=|S^{\prime \prime}|q^{-1-2 \epsilon} \phi(0)\stackrel{(\ref{B7.36})}{>}q^{-1-78 \epsilon} \phi(0)|S|\stackrel{(\ref{B7.19})}{>} q^{-80\epsilon},
\end{align}
which gives $(\ref{B7.8.2})$ with $C>80$. We have proved Proposition $\ref{1441}$ with $C>347$.
\end{proof}

Next, we combine Proposition \ref{1441} and {Theorem} \ref{com1} to prove
\begin{proposition}\label{1536}
 Given $0<\delta_{1}, \delta_{2}<1$, there exist $\epsilon=\epsilon\left(\delta_{1}, \delta_{2}\right)>0$ and $\tau=\tau\left(\delta_{1}, \delta_{2}\right)>0$ such that the following holds. Let $q, \mu$ be given as in Proposition \ref{1441}. Assume
 \begin{itemize}
     \item[(i)] The $2$-norm of $\hat{\mu}$ is large: \begin{equation}
\label{B7.42}\sum_{\xi \in \hat{\cO/\cP^n}}|\hat{\mu}(\xi)|^{2}>q^{\delta_{1}}.
\end{equation}
 \item[(ii)] 
For all congruential rings $\subringtwo< \mathcal O/{\mathcal P^\startingpower}$, $\cosetpara, \subringfactor\in \mathcal O/{\mathcal P}^\startingpower$ such that $$ [\mathcal O/\mathcal P^\startingpower: \subringfactor \subringtwo ]> q^{\epsilon},$$
we have  
\begin{equation}
\label{B7.43}
\mu( \cosetpara+\subringfactor \subringtwo ) <  [\mathcal O/\mathcal P^\startingpower: \subringfactor\subringtwo ]^{-\delta_2}.
\end{equation}
 \end{itemize}
Then
\begin{equation}
\label{B7.44}\sum\limits_{ y \in \cO/\cP^n}\sum_{\xi \in \hat{\cO/\cP^n}}|\hat{\mu}(\xi)|^{2}|\hat{\mu}(y \xi)|^{2} \mu(y)<q^{-\tau} \sum_{\xi \in \hat{\cO/\cP^n}}|\hat{\mu}(\xi)|^{2}.
\end{equation}
\end{proposition}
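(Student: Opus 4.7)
The plan is to apply Proposition \ref{1441} with parameters $\epsilon, \tau$ (to be tuned with $\tau \ll \epsilon \ll \min(\delta_1,\delta_2)$): its alternative (i) is precisely the desired conclusion \eqref{B7.44}, while alternatives (ii) and (iii) will be ruled out by \eqref{B7.43} and by Theorem \ref{sumproduct} respectively.

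For alternative (ii), the concentration $\mu(\pi_m^{-1}(y)) > cp^{-2d_0 m\tau/\epsilon}$ with $m > \epsilon n$ can be written as $\mu(a+bR) > cp^{-2d_0 m\tau/\epsilon}$ for the coset $a + bR$ with $R = \cO/\cP^n$ (congruential) and $b = \fp^m$, whose index $p^{d_0 m}$ exceeds $q^\epsilon$; hypothesis \eqref{B7.43} then forces $\mu(a+bR) < p^{-d_0 m\delta_2}$, contradicting the first bound once $\tau < \epsilon\delta_2/2$.

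The substantive case is alternative (iii), which produces $\bar S \subset (\cO/\cP^n)^*$ with small doubling \eqref{B7.7}. The strategy is to verify the hypotheses of Theorem \ref{sumproduct} for $\bar S$ and then compare its guaranteed sum-product expansion against \eqref{B7.7}. The size constraint \eqref{1020} for $\bar S$ follows from \eqref{B7.6} and \eqref{B7.42} (upper bound $|\bar S| < q^{1-\delta_1+2\epsilon}$), and from \eqref{B7.8.2} together with the trivial $(\mu*\mu_-)(x)\le \phi(0)/q$ (lower bound $|\bar S|\ge q^{1-C\epsilon}/\phi(0) \ge q^{\delta_2 - C\epsilon}$, using $\phi(0) = q\sum\mu^2\le q\max\mu \le q^{1-\delta_2}$, the last inequality being \eqref{B7.43} at $R=\cO/\cP^n$, $b=0$). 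The key transfer is the non-concentration \eqref{2226} for $\bar S$: if $|\bar S\cap(a+bR_1)|$ were too large for some congruential $R_1$ with large index, then the injectivity \eqref{B7.38.1} together with the pointwise lower bound \eqref{B7.17.1} yields $(\mu*\mu_-)(\fp^{m'}a+\fp^{m'}bR_1) \ge |\bar S\cap(a+bR_1)|\,q^{-1-2\epsilon}\phi(0)$, and the Cauchy--Schwarz identity $(\mu*\mu_-)(y+H) = \sum_C \mu(C-y)\mu(C) \le \max_C \mu(C)$ over cosets $C$ of the additive subgroup $H = \fp^{m'}bR_1$ (whose index is at least $[\cO/\cP^n:bR_1]$) locates a coset carrying $\mu$-mass in violation of \eqref{B7.43}.

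Once \eqref{1020} and \eqref{2226} are verified for suitable parameters, Theorem \ref{sumproduct} delivers $|\bar S + \bar S| + |\bar S \cdot \bar S| > |\bar S|^{1+\delta_3}$ for some $\delta_3 = \delta_3(\delta_1,\delta_2) > 0$; comparison with \eqref{B7.7} forces $|\bar S|^{\delta_3} < q^{C\epsilon}$, contradicting the lower bound $|\bar S| > q^{\delta_2 - C\epsilon}$ provided $\epsilon \ll \delta_2\delta_3$. The main obstacle is the transfer step above -- tracking the additive-group index of $bR_1$ under multiplication by $\fp^{m'}$ and converting mass of $\mu*\mu_-$ on a subgroup coset into mass of $\mu$ on such a coset via Cauchy--Schwarz; the remaining parameter bookkeeping is routine.
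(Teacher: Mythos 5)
Your proposal is correct and follows essentially the same route as the paper's proof: invoke Proposition \ref{1441}, kill alternative (ii) by taking $\epsilon\sim\tau/\delta_2$ so that \eqref{B7.43} applied to $b=\fp^m$, $R=\cO/\cP^n$ contradicts \eqref{B7.6.2}, and kill alternative (iii) by feeding $\bar S$ into Theorem \ref{sumproduct} and contradicting \eqref{B7.7}. The only (cosmetic) differences are that you apply Theorem \ref{sumproduct} to $\bar S$ itself rather than to $\fp^{m'}\bar S$ and deduce $\phi(0)\le q^{1-\delta_2}$ directly from $\max_x\mu(x)$, whereas the paper first shows $\mu*\mu_-$ inherits \eqref{B7.43}; the key non-concentration transfer via \eqref{B7.38.1}, \eqref{B7.17.1}, \eqref{B7.8.2} and the coset decomposition of $(\mu*\mu_-)$ is the same as the paper's chain \eqref{B7.43.1}--\eqref{B7.51.1}.
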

\begin{proof}
Take $\tau=\tau(\delta_1, \delta_2)>0$ sufficiently small (specified at \eqref{0713}) and assume $(\ref{B7.44})$ fails. Take
\begin{equation}
\label{B7.45}
\epsilon=2\tau / \delta_{2}>\tau,
\end{equation}
so that both $(\ref{B7.6.1})$ and $(\ref{B7.6.2})$ fail. Then we get $\bar{S}$ by Proposition \ref{1441} which satisfies $(\ref{B7.6})$, $(\ref{B7.7})$, $(\ref{B7.38.1})$, $(\ref{B7.17.1})$ and $(\ref{B7.8.2})$.
From $(\ref{B7.38.1}), (\ref{B7.6})$ and $(\ref{B7.42})$,
\begin{equation}
\label{B7.46}
|\mathfrak{p}^{\smallerpower^\prime}\bar{S}|=|\bar{S}|<\frac{q^{1+2 \epsilon}}{\sum\limits_{\xi \in \hat{\cP/\cP^n}}|\hat{\mu}(\xi)|^{2}} < q^{1-\delta_{1}+\frac{4\tau}{\delta_{2}}}<q^{1-\frac{1}{2} \delta_{1}},
\end{equation}
since
\begin{equation}
\label{B7.47}
\tau<\frac{1}{8} \delta_{1} \delta_{2} .
\end{equation}
By $(\ref{B7.43})$, for all congruential rings $\subringtwo< \mathcal O/{\mathcal P^\startingpower}$, $\cosetpara, \subringfactor\in \mathcal O/{\mathcal P}^\startingpower$ such that $ [\mathcal O/\mathcal P^\startingpower: \subringfactor \subringtwo ]> q^{\epsilon},$
\begin{align}
\mu* \mu_-( \cosetpara+\subringfactor \subringtwo)&=\sum_{x\in \cO/\cP^n}\mu_-(x)\mu(a-x+\subringfactor \subringtwo )\nonumber \\
&< \sum_{x\in R} \mu_-(x) [\mathcal O/\mathcal P^\startingpower: \subringfactor \subringtwo ]^{-\delta_2} \nonumber\\
&=  [\mathcal O/\mathcal P^\startingpower: \subringfactor \subringtwo]^{-\delta_2},\label{B7.43.1}
\end{align}
which shows that $\mu* \mu_-$ satisfies the same non-concentration property as $(\ref{B7.43})$. In particular, setting $\subringtwo=\mathcal O/\mathcal P^\startingpower, \subringfactor=0$, we obtain
\begin{equation}
\mu* \mu_-(a)<q^{-\delta_2} \text{ for any }a\in\cO/\cP^n.   
\end{equation}
On the other hand, by $(\ref{B7.8.2})$,
\begin{equation}
\mu* \mu_-(\mathfrak{p}^{\smallerpower^\prime}\bar{S})>q^{-C\epsilon},
\end{equation}
so
\begin{equation}
\label{N02}
|\mathfrak{p}^{\smallerpower^\prime}\bar{S}|>q^{-C\epsilon+\delta_2}>q^{\delta_2/2}
\end{equation}
provided that
\begin{equation}
\epsilon<\frac{\delta_2}{2C}.
\end{equation}
Then $(\ref{N02})$ and $(\ref{B7.46})$ gives
$$q^{\delta_2/2}<|\fp^{m'}\bar{S}|<q^{1-\delta_1/2}.$$

 Take $\bar{\epsilon}=\epsilon\left(\min(\delta_{1} / 2, \delta_2/2), \delta_{2} / 2\right)$ in Theorem $\ref{sumproduct}$. We then compute for congruential rings $\subringtwo< \mathcal O/{\mathcal P^\startingpower}$, $\cosetpara, \subringfactor\in \mathcal O/{\mathcal P}^\startingpower$ such that $ [\mathcal O/\mathcal P^\startingpower: \subringfactor\subringtwo]> q^{\bar{\epsilon}},$
\begin{align}
q^{-2\epsilon}\phi(0)|\mathfrak{p}^{\smallerpower^\prime}\bar{S}\cap (\cosetpara+\subringfactor\subringtwo)|&\stackrel{(\ref{B7.17.1})}{<}\phi\left(\mathfrak{p}^{m^\prime}\bar{S}\cap (\cosetpara+\subringfactor\subringtwo)\right)\nonumber\\
&\stackrel{(\ref{B7.10.0}),(\ref{B7.43.1})}{<}q[\mathcal O/\mathcal P^\startingpower: \subringfactor\subringtwo ]^{-\delta_2}\nonumber\\
&\stackrel{(\ref{B7.8.2})}{<}q^{C\epsilon}\phi(\mathfrak{p}^{\smallerpower^\prime}\bar{S})[\mathcal O/\mathcal P^\startingpower: \subringfactor\subringtwo ]^{-\delta_2}\nonumber\\
&\stackrel{(\ref{B7.11})}{<}q^{C\epsilon}\phi(0)|\mathfrak{p}^{\smallerpower^\prime}\bar{S}|[\mathcal O/\mathcal P^\startingpower: \subringfactor\subringtwo ]^{-\delta_2},\nonumber
\end{align}
which implies
\begin{align}
|\mathfrak{p}^{\smallerpower^\prime}\bar{S}\cap (\cosetpara+\subringfactor\subringtwo)|&<q^{(C+2)\epsilon}|\mathfrak{p}^{\smallerpower^\prime}\bar{S}|[\mathcal O/\mathcal P^\startingpower: \subringfactor\subringtwo]^{-\delta_2}\nonumber\\
&=(q^{\bar{\epsilon}})^{(C+2)\frac{\epsilon}{\bar{\epsilon}}}|\mathfrak{p}^{\smallerpower^\prime}\bar{S}|[\mathcal O/\mathcal P^\startingpower: \subringfactor\subringtwo ]^{-\delta_2}\nonumber\\
&<[\mathcal O/\mathcal P^\startingpower: \subringfactor\subringtwo ]^{-\delta_2+(C+2)\frac{\epsilon}{\bar{\epsilon}}}|\mathfrak{p}^{\smallerpower^\prime}\bar{S}|\nonumber\\
&< [\mathcal O/\mathcal P^\startingpower: \subringfactor\subringtwo]^{-\frac{1}{2}\delta_2}|\mathfrak{p}^{\smallerpower^\prime}\bar{S}|,\label{B7.51.1}
\end{align}
given that
\begin{equation}
\label{B7.52.1}
\epsilon<\frac{\bar{\epsilon} \delta_{2}}{2(C+2)}.
\end{equation}
Then by $(\ref{N02}), (\ref{B7.46}), (\ref{B7.51.1})$ we can apply Theorem $\ref{sumproduct}$ to $\mathfrak{p}^{\smallerpower^\prime}\bar{S}$ with $(\delta_1, \delta_2)=\left(\min(\delta_{1} / 2, \delta_2/2), \delta_{2} / 2\right)$ and get
\begin{equation}
|\mathfrak{p}^{\smallerpower^\prime}\bar{S}+\mathfrak{p}^{\smallerpower^\prime}\bar{S}|+|\mathfrak{p}^{\smallerpower^\prime}\bar{S}\cdot \mathfrak{p}^{\smallerpower^\prime}\bar{S}|>|\mathfrak{p}^{\smallerpower^\prime}\bar{S}|^{1+\delta_3}
\end{equation}
with $\delta_3=\delta_3(\min(\delta_{1} / 2, \delta_2/2), \delta_{2} / 2)$. It's clear that
\begin{equation*}
\mathfrak{p}^{\smallerpower^\prime}\bar{S}+\mathfrak{p}^{\smallerpower^\prime}\bar{S}=\mathfrak{p}^{\smallerpower^\prime}(\bar{S}+\bar{S}),
\end{equation*}
\begin{equation*}
\mathfrak{p}^{\smallerpower^\prime}\bar{S}\cdot\mathfrak{p}^{\smallerpower^\prime}\bar{S}=\mathfrak{p}^{2\smallerpower^\prime}(\bar{S}\cdot\bar{S}),
\end{equation*}
and
\begin{equation*}
|\mathfrak{p}^{\smallerpower}A|\leq |A|.
\end{equation*}
It follows that
\begin{equation}
\label{B7.7.1}
|\bar{S}+\bar{S}|+|\bar{S} \cdot \bar{S}| >|\bar{S}|^{1+\delta_3}.
\end{equation}
If
\begin{equation}
\label{B7.53.1}
\epsilon<\frac{\delta_2\delta_3}{2C},
\end{equation}
then
\begin{equation}
q^{C\epsilon}<q^{\frac{1}{2}\delta_2\delta_3}<|\bar{S}|^{\delta_3}
\end{equation}
by $(\ref{N02})$, so $(\ref{B7.7.1})$ contradicts $(\ref{B7.7})$. \par
Summarize all assumptions on parameters from $(\ref{B7.45})$, $(\ref{B7.47})$, $(\ref{B7.52.1})$, $(\ref{B7.53.1})$, it suffices to take
\begin{align}\label{0713}
\tau<\min\left(\frac{1}{8}\delta_{1} \delta_{2}, \frac{1}{4(C+2)}\bar{\epsilon} \delta_{2}^{2}, \frac{1}{4C}\delta_{2}^2 \delta_{3}\right)
\end{align}
and
\[\epsilon=2\tau/\delta_2.\]
This proves Proposition \ref{1536}.

\end{proof}

\newpage

\section{Proof of Theorem \ref{exp1}}
Now we are ready to prove Theorem \ref{exp1}.  We first prove Theorem \ref{exp1} in the special case that all $\mu_i$ are supported on $(\cO/\cP^n)^*$, and $\mu_i=\mu_0$ for some $\mu_0$.  Then by using a elementary combinatorial identity, we prove Theorem \ref{exp1} for all $(\cO/\cP^n)^*$ supported on  $(\cO/\cP^n)^*$, without assuming all $\mu_i$ are the same.  After that, we prove Theorem \ref{exp1} in full generality.  \par
\subsection{The case $\mu_i=\mu_0$ and $\normalfont\text{Supp}(\mu_0)\subset (\cO/\cP^n)^*$}
Given $\mu_i=\mu_0$ satisfying \eqref{nonconcentration1}, write $\nu_k=\mu^{\otimes k}$.  It is straightforward that 
\begin{align}\label{B7.55}
\sum_{y}\nu_k(y)\hat{\nu_k}(y\xi)=\hat{\nu_{2k}}(\xi).
\end{align} 

Define for $k \in \mathbb{Z}_{+}, \kappa>0$ the set
$$
\Omega_{k, \kappa}=\left\{\xi \in R|| \hat{\nu}_{k}(\xi) \mid>q^{-\kappa}\right\} .
$$
If $\xi \in \Omega_{2 k, \kappa}, (\ref{B7.55})$ implies
$$
\sum_{x}\left|\sum_{y}\xi(xy) \nu_{k}(y)\right| \nu_{k}(x)=\sum_{x}\left|\hat{\nu}_{k}(x\xi)\right| \nu_{k}(x)\geq \left|\hat{\nu}_{2k}(\xi)\right|>q^{-\kappa},
$$
 Hence, for $r \in{Z}_{+}$,
\begin{equation}
\label{B7.56} \sum_{x}\left|\sum_{y}\xi(xy) \nu_{k}(y)\right|^{2 r} \nu_{k}(x)>q^{-2 \kappa r} .
\end{equation}
The left side of $(\ref{B7.56})$ equals
\begin{align*}
&\ \ \sum_{x}\left(\sum_{y}\xi(xy) \nu_{k}(y)\sum_{y} \overline{\xi(xy)} \nu_{k}(y)\right)^{r}\nu_{k}(x)\\
&=\sum_{x}\sum_{y_{1}, \ldots, y_{2r} }\xi((y_{1}-y_{2}+\cdots+y_{2r-1}-y_{2r})x)\nu_{k}\left(y_{1}\right) \cdots \nu_{k}\left(y_{2 r}\right)\nu_{k}(x)\\
&=\sum_{y_{1}, \ldots, y_{2r} } \hat{\nu_{k}}\left(\left(y_{1}-y_{2}+\cdots-y_{2r}\right) \xi\right) \nu_{k}\left(y_{1}\right) \cdots \nu_{k}\left(y_{2 r}\right)\\
&=\sum_{z} \hat{\nu}_{k}(z \xi)\left(\nu_{k}*\nu_{k}^{-}\right)^{(r)}(z)\\
&\leq \left(\sum_{z}\left|\hat{\nu}_{k}(z \xi)\right|^{4 r}\left(\nu_{k} * \nu_{k}^{-}\right)^{(r)}(z)\right)^{\frac{1}{4r}},
\end{align*}
where in last step we applied H\"older's inequality. We denote here $\nu_k^{-}(x)=\nu_k(-x)$ and by $\nu_k^{(r)}$ the $r$-fold additive convolution of $\nu$.
Then, if $\xi \in \Omega_{2 k, \kappa}$,
$$
\sum_{z}\left|\hat{\nu}_{k}(z \xi)\right|^{4 r}\left(\nu_{k} * \nu_{k}^{-}\right)^{(r)}(z)>q^{-8 \kappa r^{2}}.
$$
Hence
\begin{equation}
\label{B7.57}
\sum_{z, \xi}\left|\hat{\nu}_{2 k}(\xi)\right|^{4 r}\left|\hat{\nu}_{k}(z \xi)\right|^{4 r}\left(\nu_{k} * \nu_{k}^{-}\right)^{(r)}(z)>q^{-12 \kappa r^{2}} \mid \Omega_{2 k, \kappa }\mid .
\end{equation}
We apply Proposition \ref{1536} with
\begin{equation}
\label{mu1}
\mu=\frac{1}{2}\left[\left(\nu_{k} * \nu_{k}^{-}\right)^{(r)}+\left(\nu_{2k} * \nu_{2k}^{-}\right)^{(r)}\right],
\end{equation}
so that
\begin{align}
\hat{\mu}(\xi)&=\frac{1}{2}\left[\left(\hat{\nu}_{k}(\xi)\hat{{\nu}_{k}^{-}}(\xi)\right)^{r}+\left(\hat{\nu}_{2k}(\xi)\hat{{\nu}_{2k}^{-}}(\xi)\right)^{r}\right]\nonumber\\
&=\frac{1}{2}\left(\left|\hat{\nu}_{k}(\xi)\right|^{2r}+\left|\hat{\nu}_{2k}(\xi)\right|^{2r}\right).\label{muhat1}
\end{align}

We get from $\mathrm{Supp}(\nu_{k}) \subset (\cO/\cP^n)^{*}$ and from \eqref{B7.55} that
\begin{align*}
\sum_{\xi\in\hat{\cO/\cP^n}}|\hat{\nu}_{2k}(\xi)|^{4r}&\leq \sum_{{\xi\in\hat{\cO/\cP^n}}}\sum_{x \in \cO/\cP^n}|\hat{\nu}_{k}(x \xi)\nu_{k}(x)|^{4r}\\
&=\sum_{{\xi\in\hat{\cO/\cP^n}}}\sum_{x \in \cO/\cP^n} |\hat{\nu}_{k}(x \xi)|^{4r}\nu_{k}(x)^{4r}\\
&=\sum_{{\xi\in\hat{\cO/\cP^n}}}\sum_{x \in \cO/\cP^n} |\hat{\nu}_{k}(\xi)|^{4r}\nu_{k}({x})^{4r}\\
&\leq\sum_{{\xi\in\hat{\cO/\cP^n}}} |\hat{\nu}_{k}(\xi)|^{4r}\left(\sum_{x \in \cO/\cP^n}\nu_{k}({x})\right)^{4r}\\
&=\sum_{\xi} |\hat{\nu}_{k}(\xi)|^{4r}:= q^{1-\delta_{k, r}}.
\end{align*}

Therefore, from \eqref{muhat1}, we have
$$
\frac{1}{4}q^{1-\delta_{k,r}}\leq \sum_{\xi}|\hat{\mu}(\xi)|^2\leq q^{1-\delta_{k,r}}.
$$
It is clear that the numbers $\delta_{k, r}$ are increasing in both $k$ and $r$. \par
For two measures $\omega_1,\omega_2$ on $\cO/\cP^n$ and for any $S \subset \cO/\cP^n$,
\[\omega_1* \omega_2(S):=\sum_{y\in \cO/\cP^n}\omega_1(y)\omega_2(S-y)\leq \max_{y\in \cO/\cP^n}{\omega_2}(y+S),\]
and if $\text{Supp}(\omega_1), \text{Supp}(\omega_2)\subset (\cO/\cP^n)^*$, then 
\[\omega_1\otimes \omega_2(S):=\sum_{y\in (\cO/\cP^n)^*}\omega_1(y)\omega_2(y^{-1}S)\leq \max_{y\in (\cO/\cP^n)^*}{\omega_2}(yS).\]

From the defination of $\nu_k$, 
\[{\nu}_{k} * {\nu}_{k}^{-}(S)\leq\max_{x\in \cO/\cP^n}{\nu_k}(x+S)\leq \max_{x\in \cO/\cP^n, y\in (\cO/\cP^n)^*}\mu_0(x+yS) \]
and
\[{\nu}_{2k} * {\nu}_{2k}^{-}(S)\leq\max_{x\in \cO/\cP^n}{\nu_{2k}}(x+S)\leq  \max_{x\in \cO/\cP^n, y\in (\cO/\cP^n)^*}\mu_0(x+yS).\]

So we have
\begin{equation}
\label{B7.58} \mu(S) \leq \frac{1}{2}[\max_{x\in \cO/\cP^n}{\nu_k}(x+S)+\max_{x\in \cO/\cP^n}{\nu_{2k}}(x+S)]\leq \max_{x\in \cO/\cP^n, y\in (\cO/\cP^n)^*}\mu_0(x+yS).
\end{equation}
Therefore, for all congruential rings $\subringthree< \mathcal O/\mathcal P^\startingpower$, $\cosetpara, \subringfactor\in \mathcal O/{\mathcal P}^\startingpower$ such that $$ [\mathcal O/\mathcal P^m: \subringfactor \subringthree]> q^{\epsilon},$$
we have  
\[\mu(\cosetpara+\subringfactor \subringthree ) \leq \max_{x\in \cO/\cP^n, y\in (\cO/\cP^n)^*}\mu_0(x+y(\cosetpara+\subringfactor\subringthree))\]
\[=\max_{x\in \cO/\cP^n, y\in (\cO/\cP^n)^*}\mu_0((x+y\cosetpara)+y\subringfactor \subringthree)
<  [\mathcal O/\mathcal P^m: \subringfactor \subringthree ]^{-\gamma}\]
by assumption \eqref{nonconcentration1} on $\mu_0$. Hence condition $(\ref{B7.43})$ in Proposition \ref{1536} is valid with $\delta_{2}=\gamma.$ \par 

Take
\begin{equation}
\label{B7.74}
\delta_{1}=\gamma / 2.
\end{equation}
 Assume
\begin{equation}
\label{B7.59} 1-\delta_{k, r}>\delta_{1},
\end{equation}
so that $(\ref{B7.42})$ holds for $\mu$ defined in \eqref{mu1}. Then $(\ref{B7.44})$ holds, with $\tau_0=\tau\left(\delta_{1}, \gamma\right)\in (0,1)$. \par
Recall \eqref{mu1} and \eqref{muhat1}.  We have

\begin{align}\nonumber
&\sum_{y\in \cO/\cP^n}\sum_{\xi \in \hat{\cO/\cP^n}}\left|\hat{\nu}_{2 k}(\xi)\right|^{4 r}\left|\hat{\nu}_{k}(y \xi)\right|^{4 r}\left(\nu_{k}*\nu_{k}^{-}\right)^{(r)}(y)\\\leq &\sum_{y\in \cO/\cP^n}\sum_{\xi \in \hat{\cO/\cP^n}}(4|\hat{\mu}(\xi)|^{2})(4|\hat{\mu}(y \xi)|^{2}) (2\mu(y))\nonumber\\
\nonumber<&32q^{-\tau_0} \sum_{\xi \in \hat{\cO/\cP^n}}|\hat{\mu}(\xi)|^{2} \\ 
 <&q^{-\tau_0+} \sum_{\xi \in \hat{\cO/\cP^n}}\left|\hat{\nu}_{k}(\xi)\right|^{4 r} \label{1607}\\
=&q^{1-\delta_{k, r}-\tau_0+}
\label{B7.60}
\end{align}
for sufficiently large $q$, where at \eqref{1607} we again used Bourgain's notation.  The inequality means $32q^{-\tau_0} \sum_{\xi \in \hat{\cO/\cP^n}}|\hat{\mu}(\xi)|^{2}<q^{-\tau_0+\varepsilon} \sum_{\xi \in \hat{\cO/\cP^n}}\left|\hat{\nu}_{k}(\xi)\right|^{4 r}$ for arbitrarily small $\varepsilon>0$ when $q$ is sufficiently large.  \par
Combining \eqref{B7.60} and \eqref{B7.57}, we obtain
\begin{equation}
\label{B7.61}
\left|\Omega_{2 k, \kappa}\right|<q^{1-\delta_{k, r}-\tau_0+12 \kappa r^{2}+} .
\end{equation}
Take
\begin{equation}
\label{B7.62}
\kappa=\tau_0 /\left(40 r^{2}\right)<1/40<1/4
\end{equation}
and let
\begin{equation}
\label{B7.63}
\bar{r}=[1 / \kappa]\geq 4.
\end{equation}
We get
\[\bar{r}+1>1 / \kappa\]
and
\[\bar{r}\kappa>1-\kappa>3/4.\]
From $(\ref{B7.61})$,
\begin{equation}
\label{B7.64}\left|\Omega_{2 k, \kappa}\right|<q^{1-\delta_{k, r}-\frac{1}{2} \tau_0}
\end{equation}
and
\[q^{1-\delta_{2 k, \bar{r}}}=\sum_{\xi}\left|\hat{\nu}_{2 k}(\xi)\right|^{4 \bar{r}}
=\sum_{\xi\in \Omega_{2 k, \kappa}}\left|\hat{\nu}_{2 k}(\xi)\right|^{4 \bar{r}}+\sum_{\xi\notin \Omega_{2 k, \kappa}}\left|\hat{\nu}_{2 k}(\xi)\right|^{4 \bar{r}}\]
\[\leq\left|\Omega_{2 k, \kappa}\right|+q \cdot q^{-4 \tilde{r} \kappa}<q^{1-\delta_{k, r}-\frac{\tau_0}{2}}+q^{-2}<q^{1-\delta_{k, r}-\frac{\tau_0}{3}},\]
so
\begin{equation}
\label{B7.65} 
\delta_{2 k, \bar{r}} >\delta_{k, r}+\tau_0 / 3.
\end{equation}
By $(\ref{B7.62})$, $(\ref{B7.63})$,
\begin{equation}
\label{B7.66}
\bar{r}<1 / \kappa=40 r^{2} \tau_0^{-1}.
\end{equation}
Starting from $k_0=1, r_0=1$, we perform an iteration of the preceding until \eqref{B7.59} is violated. Thus, according to $(\ref{B7.65})$, $(\ref{B7.66})$,
\begin{align}
k_s&=2^s,\\
\delta_{2^{s}, r_{s}} &>\delta_{2^{s-1}, r_{s-1}}+\tau_0 / 3, \label{B7.67}\\
r_{s} &<40 \tau_0^{-1} r_{s-1}^{2} \label{B7.68}
\end{align}
and the iteration terminates at step $s^{\prime} \geq 0$ when
\begin{equation}
\label{B7.69}
\delta_{2^{s^{\prime}}, r_{s^{\prime}}}>1-\delta_{1} .
\end{equation}
It's guaranteed that
\begin{equation}
\label{B7.70}
s^{\prime}<3 / \tau_0.
\end{equation}
Note that for $s\geq 1$
\[\frac{\log r_s}{2^s}<\frac{\log(40 \tau_0^{-1})}{2^s}+\frac{\log r_{s-1}}{2^{s-1}},\]
so
\begin{equation}
\frac{\log r_s}{2^s}<\log(40 \tau_0^{-1})
\end{equation}
and so 
\begin{equation}
\label{B7.70.1}
r_s<(40 \tau_0^{-1})^{2^s}.
\end{equation}

Denoting $k^{\prime}=2^{s^{\prime}}, r^{\prime}=r_{s^{\prime}}$, we obtain
\begin{equation}
\label{B7.71}
\sum_{\xi}\left|\widehat{\nu_{k^{\prime}}}(\xi)\right|^{4 r^{\prime}}<q^{\delta_{1}}
\end{equation}
It follows by $(\ref{B7.55})$ again that for all $\xi_{0} \in \hat{\cO/\cP^n}$ and $k>k^{\prime}$,
$$
\hat{\nu}_{k}\left(\xi_{0}\right)=\sum_{x \in \cO/\cP^n} \widehat{\nu_{k-1}}\left(x \xi_{0}\right)\mu_1(x).
$$
By H\"older's inequality, and that $\mu_1(x)<q^{-\gamma}$ for all $x \in\cO/\cP^n$, 
\begin{align}
\nonumber
\left|\hat{\nu}_{k}\left(\xi_{0}\right)\right|\leq &\left(\sum_{x\in\cO/\cP^n}\left|\hat{\nu}_{k-1}\left(x \xi_{0}\right)\right|^{4 r^{\prime}}\right)^{\frac{1}{4r^{\prime}}}\left(\sum_{\cO/\cP^n}\mu_1(x)^{\frac{4r^{\prime}}{4r^{\prime}-1}}\right)^{\frac{4r^{\prime}-1}{4r^{\prime}}} \\
< & q^{\frac{\delta_{1}}{4r^{\prime}}}q^{-\frac {\gamma}{4r^{\prime}}}\label{B7.72},
\end{align}
which implies that
\begin{equation}
\label{B7.75}
\max _{\xi \in (\cO/\cP^n)^{*}}\left|\hat{\nu}_{k}(\xi)\right|<q^{-\gamma / 8 r^{\prime}}.
\end{equation}

To summarize, we can take
\begin{align}
\nonumber &k=[8^{\frac{1}{\tau_0}}], \\
\nonumber &r^{\prime}=(40 \tau_0^{-1})^{8^{\frac{1}{\tau_0}}},\\
&\tau=\gamma / 8 r^{\prime}.
\end{align}
\\
\\
This proves Theorem \ref{exp1} assuming $\mu_i=\mu_0$ for all $i$ and $\mathrm{supp}\ \mu_0 \subset (\cO/\cP^n)^{*}$.
\\
\\
\subsection{The case $\normalfont\text{Supp}(\mu_i)\subset (\cO/\cP^n)^*$ }\label{1124}  Consider $\nu_k=\mu_1\otimes \mu_2  \otimes \cdots \otimes \mu_k$. By Lemma $\ref{polyiden}$, we know 
\[\nu_k=\frac{1}{k!}\sum\limits_{i=0}^{k-1}(-1)^iP_{k-i},\]
where
\[P_t=\sum\limits_{1\leq i_1<i_2<\cdots <i_t\leq k} \left(\sum\limits_{j=1}^t\mu_{i_j}\right)^{\otimes k}.\]
We know for any $1\leq i_1<i_2<\cdots <i_t\leq k$, $\zeta=\zeta(i_1, i_2, \cdots , i_t):=\frac{1}{t}\left(\sum\limits_{j=1}^t\mu_{i_j}\right)$ is a probability measure on $R$ since each $\mu_{i_j}$ is. Moreover, $\zeta$ satisfies {\eqref{nonconcentration1}} and $\text{Supp}(\zeta) \subset (\cO/\cP^n)^*$ by properties of $\mu_{i_j}$. By previous case we know there exist $k=k(\gamma), \tau=\tau(\gamma)$ such that
\[\max _{\xi \in R^{*}}\left|\hat{\zeta}_{k}(\xi)\right|<q^{-\tau},\]
where $\zeta_k=\zeta^{\otimes k}$. Therefore, for $\xi \in (\cO/\cP^n)^*$,
\[\left|\hat{P_t}(\xi)\right|< \sum\limits_{1\leq i_1<i_2<\cdots <i_t\leq k} t^k q^{-\tau} \leq k^k q^{-\tau}\binom{k}{t}\leq k^k q^{-\tau}\binom{k}{[\frac{k}{2}]}\]
and thus
\[\left|\hat{\nu_k}(\xi)\right|< \frac{k}{k!}k^k q^{-\tau}\binom{k}{[\frac{k}{2}]}\leq k^{k+1} q^{-\tau} \leq q^{-\tau/2}\]
for sufficiently large $q$. This proves Theorem \ref{exp1} for $\mathrm{Supp}(\mu_{i}) \subset (\cO/\cP^n)^*$.
\\
\\
\subsection{The general case}
Let {$k_0(\gamma), \epsilon_0(\gamma)$ and $\tau_0(\gamma)$} be the functions for $k, \epsilon, \tau$ obtained in Section \ref{1124}, for the special case all $\mu_i$ are supported on invertible elements. Now for the general case, we take 

$$k=k_0(\frac{\gamma}{2}).$$

Let $\mu_1, \cdots, \mu_k$ on $\cO/\cP^n$ be general measures on $\cO/\cP^n$ satisfying \eqref{nonconcentration1}, and let $\chi$ be a general primitive character on $\cO/\cP^n$.  For each $1\leq i\leq k$, decompose 
$$\mu_{k}=\sum_{0\leq v\leq n}\mu_{i, v}, $$
where $\mu_{i,  v}$ is the restriction of $\mu$ to $R_v$, recalling \eqref{1129}.  

It thus suffices to establish for a proper choice of $\tau$, 
\begin{equation}
\label{B7.79}
\left|\sum_{x_i\in R_{v_{i}}, 1\leq i\leq k} \mu_1(x_1)\cdots\mu_k(x_k)\chi(x_1\cdots x_k)\right|<q^{-2\tau}
\end{equation}
for every $0\leq v_1, \cdots, v_k\leq n$, so that the total sum of such is bounded by $q^{-\tau}$.  \par
We assume
\begin{align}\label{1142}
\tau<\epsilon \gamma.
\end{align}

It suffices to assume each $v_i\leq 2\epsilon n$; {otherwise, $|\mu_i|<q^{-2\tau}$ by non-concentration.} Furthermore, we can assume 
\begin{equation}
\label{B7.80} |\mu_{i,v_i}|>q^{-\sigma}
\end{equation}
for all $i$. Here $\sigma=\sigma(\gamma)>0$ is a small constant and will be given at \eqref{1449}. In particular, $2\tau<\sigma$. Indeed, If 
\[\mu_i\left(R_{v_{i}}\right)\leq q^{-\sigma}<q^{-2\tau}\]
for some $i$, then the trivial bound of $(\ref{B7.79})$ gives
\[
\quad \left|\sum_{x_{1} \in R_{v_{1}}, \ldots, x_{k} \in R_{v_{k}}} \mu_1(x_1)\cdots\mu_k(x_k)\chi(x_1\cdots x_k)\right|\leq q^{-\sigma}<q^{-2\tau}.
\]

 Define
$$
\startingpower^{\prime}=\startingpower-v_{1}-\cdots-v_{k}.
$$
Then, $$\nu_{1}+\cdots+\nu_{k} \leq k \epsilon \startingpower<\startingpower / 2$$ if
\begin{equation}
\label{B7.83} \epsilon<\frac{1 }{2 k},
\end{equation}
so that
\begin{equation}
\label{B7.84}
\startingpower^\prime>\frac{1}{2}\startingpower.
\end{equation}

Let $\mu_{i, v_i}^{*}$ be the pushforward of $\mu_{i, v_i}$ under the map $\pi_{n'}\circ \phi_{i}$, where $\phi_i: R_{v_i}\rightarrow \cO/\cP^{n-\nu_i}$ is defined as 
$$\phi_i(\fp^{\nu_i}x)=x.$$ 
Let $\chi^*$ be the primitive character on $\cO/\cP^{n'}$ given by 
$$\chi^*(x)=\chi (\fp^{n-n'}x).$$ 
Then, 
\begin{align}\label{1000}
\reallywidehat{\mu_{1,v_1}\otimes\cdots\otimes\mu_{k,v_k}}(\chi)=\reallywidehat{\mu_{1,v_1}^*\otimes\cdots\otimes\mu_{k,v_k}^*}(\chi^*)
\end{align}

We want to apply Theorem \ref{exp1} to the right hand side of \eqref{1000} with $n$ replaced by $n'$. For this, we need to verify condition \eqref{nonconcentration1}.

We write $Q=|\mathcal O/\mathcal P^{\startingpower^\prime}|=p^{d_0\startingpower^\prime}>p^{\frac{1}{2}d_0\startingpower}=q^\frac{1}{2}$. Let
\begin{equation}
\label{N03}
\bar{\epsilon}=\epsilon_0(\gamma / 2),
\end{equation}
where $\epsilon_0({\gamma}/{2})$ is the constant obtained from the special case of Theorem \ref{exp1} considered in Section \ref{1124}. \par

{
For any congruential subring $R< \mathcal O/{\mathcal P^{\startingpower^\prime}}$, $\cosetpara, \subringfactor\in \mathcal O/{\mathcal P}^{\startingpower^\prime}$ such that  $[\mathcal O/\mathcal P^{\startingpower^\prime}: \subringfactor R_1 ]> Q^{\bar{\epsilon}},$ the preimage of $a+bR$ under canonical reduction $\cO/\cP^n\mapsto \cO/\cP^{n'}$ is also of the form $a'+b'R'$, where $a',b'\in \cO/\cP^n$ and $R'$ congruential. Moreover,  
$$[\cO/\cP^{n}: b'R']=[\cO/\cP^{n'}: bR]>Q^{\bar{\epsilon}}>q^{\epsilon}, $$
if we take 
$$\epsilon<\bar{\epsilon}/2.$$
}
Then 
\begin{align*}
\mu_{i,v_i}^*(a+bR)=&\mu_{i,v_i}(a'+b'R')
<[\cO/\cP^n: b'R')]^{-\gamma}=[\cO/\cP^{n'}: bR]^{-\gamma}.
\end{align*}

If we let 
\begin{align}\label{1449}
\sigma=\frac{\bar{\epsilon}\gamma}{4},
\end{align}
then
\begin{align}
\frac{\mu_{i,v_i}^*(a+bR)}{|\mu_{i,v_i}^*|}<[\cO/\cP^{n'}: a+bR_1]^{-\frac{\gamma}{2}}.
\end{align}

Hence the special case of Theorem \ref{exp1} considered in Section \ref{1124} applies, and we get under assumption $(\ref{B7.80})$,
\begin{align}
\reallywidehat{\mu_{1,v_1}^*\otimes\cdots\otimes\mu_{k,v_k}^*}(\chi^*)< Q^{-\tau_0(\frac{\gamma}{2})}<q^{-\tau_0/2}.
\label{B7.90}
\end{align}

This gives $(\ref{B7.79})$, if we set

$${
\tau=\min \left(\frac{1}{4} \tau_0\left(\frac{\gamma}{2}\right), \frac{\epsilon_0(\frac{\gamma}{2})\gamma}{4}, \epsilon \gamma\right).}
$$
Recalling assumptions $(\ref{B7.83})$, $(\ref{N03})$, we may take
\begin{equation}
\label{B7.92}
\epsilon=\min \left(\frac{1}{2 k_0\left(\frac{\gamma}{2}\right)}, \frac{1}{2} \epsilon_0\left(\frac{\gamma}{2}\right)\right).
\end{equation}
This proves the general case of Theorem \ref{exp1}.

\newpage

\section{First Reductions of Theorem \ref{mainthm}}
In the rest of the paper we finish the proof of Theorem \ref{mainthm}.  Let $K$ be a number field and $\mathcal O$ the ring of integer of $K$. Let $\mathfrak a$ be an ideal of $\mathcal O$ and $A\subset \mathcal O$ be such that $|\pi_{\fa}(A)|>[\mathcal O:\mathfrak a]^{\delta}$.  Write $\mathfrak a=\prod_{i\in I}\mathcal P_i^{n_i}$ where each $\mathcal P_i$ is prime with ramification index $e_i$, $\mathcal O/\mathcal P_i=\mathbb F_{p_i^{l_i}}$, $q=[\mathcal O:\mathfrak a]=\prod_{i\in I}p_i^{n_il_i}$.  For an ideal $\mathfrak b$ of $\mathcal O$, let $\pi_{\mathfrak b}$ be the canonical projection from $\mathcal O$ to $\mathcal O/\mathfrak b$.  For each $\mathcal P_i$, denote by $\mathcal O_{\mathcal P_i}$ the localization of $\mathcal O$ at $\mathcal P_i$.  We also choose for each $i\in I$ an element $\mathfrak p_i\in\mathcal O$ such that $\mathfrak p_i$ is an uniformizing element in $\mathcal O_{\mathcal P_i}$, and $\langle \mathfrak p_i, \mathcal P_j\rangle=\mathcal O$ for each $j\neq i$, where $\langle \mathfrak p_i, \mathcal P_j\rangle$ is the ring generated by $\fp_i$ and $\cP_j$. \par 
Recall the definition of congruential rings in $\cO/\cP^n$ at \eqref{1329}. For a general ideal $\fa$, we say a unital ring $R<\cO/\fa$ is congruential if $R\cong \prod_i R(\mod \cP_i^{n_i})$ and each $R(\mod \cP_i^{n_i})$ is congruential in $\cO/\cP_i^{n_i}$.
\par
The goal of this section is to reduce Theorem \ref{mainthm} to Theorem \ref{sp1}: 

\begin{thm}\label{sp1} Suppose $d$ is a positive integer.  For all $0<\gamma<1$ there exist $\epsilon=\epsilon(\gamma, d)>0, \tau=\tau(\gamma, d)>0$, $r_1=r_1(\gamma, d)\in\mathbb Z_+$, $r_2=r_2(\gamma, d)\in\mathbb Z_+$, and $N=N(\gamma, d)\in\mathbb Z_+$ such that the following holds:  Let $K$ be a number field with extension degree at most $d$, $\mathcal O$ be the ring of integers of $K$ and $\fa=\prod_i\cP_i^{n_i}$ be an ideal of $\fa$ such that $${\normalfont{gcd}}(|\cO/\cP_i|,|\cO/\cP_j|)=1$$ for $i\neq j$, and either $n_i=1$ for all $i$ or $n_i>N$ for all $i$.  \par Let $\mu_i, 1\leq i\leq {r_1}$ be probability measures on $\mathcal O/\fa$. Assume all $\mu_i$ satisfy the following non-concentration condition: \par
For all congruential subrings $R<\cO/\fa, a, b\in \cO/\fa$, with 
$$[\cO/\fa: bR]>|\cO/\fa|^\epsilon,$$
we have 
\begin{align}\label{nonconcentration}
\mu_i(a+bR )<[\cO/\fa:  bR]^{-\gamma}.
\end{align}
Then, 
$$\sum_{r_2}\text{Supp}(\mu_{1})\cdots\text{Supp}(\mu_{r_1})-\sum_{r_2}\text{Supp}(\mu_{1})\cdots\text{Supp}(\mu_{r_1})\supset \cL_2/\cL_1.$$
for some ideal $\fa<\cL_1<\cL_2$ with $$|\cL_2/\cL_1|>|\cO/\fa|^{\tau}.$$
\end{thm}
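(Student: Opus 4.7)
The plan is to combine Theorem \ref{exp1} (Fourier decay over $\cO/\cP^n$) with a Fourier-theoretic glueing argument analogous to Bourgain's treatment of $\mathbb{Z}/q\mathbb{Z}$ in \cite{Bou08}, following the strategy sketched in the introduction. Since the norms $|\cO/\cP_i|$ are pairwise coprime, the Chinese Remainder Theorem gives $\cO/\fa \cong \prod_{i=1}^{s}\cO/\cP_i^{n_i}$, and the dual group decomposes correspondingly. Set $r_1 = k(\gamma,d)$ from Theorem \ref{exp1} and $\nu = \mu_1 \otimes \cdots \otimes \mu_{r_1}$. If a character $\chi = (\chi_1,\ldots,\chi_s) \in \widehat{\cO/\fa}$ is trivial on every factor except one $\cO/\cP_i^{n_i}$ and primitive there, then $\hat\nu(\chi)$ coincides with the Fourier coefficient of the pushforward measure on $\cO/\cP_i^{n_i}$; since the non-concentration hypothesis \eqref{nonconcentration} descends through the CRT projection to \eqref{nonconcentration1} on each factor, Theorem \ref{exp1} gives $|\hat\nu(\chi)| < |\cO/\cP_i^{n_i}|^{-\tau}$.

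The harder case is a ``mixed'' character with several non-trivial components, where the Fourier coefficient does not split as a product of local pieces. To handle these, one reorders the primes and passes to a sub-ideal $\fa' = \prod_{i=1}^{s'}\cP_i^{n_i}$ chosen so that (i) $|\cO/\fa'| \geq |\cO/\fa|^{\tau_0}$ for some $\tau_0 = \tau_0(\gamma,d) > 0$, and (ii) $\nu$ satisfies a \emph{conditional} non-concentration property with respect to the filtration $\{\cO/\prod_{i=1}^{j}\cP_i^{n_i}\}_{j=1}^{s'}$: away from small exceptional sets $C_j \subset \cO/\prod_{i=1}^{j}\cP_i^{n_i}$, each conditional measure pushed forward to $\cO/\cP_{j+1}^{n_{j+1}}$ again satisfies \eqref{nonconcentration1}. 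Iterating Theorem \ref{exp1} along this filtration then yields $|\hat\mu(\chi)| < |\cO/\fa'|^{-\tau}$ for every non-trivial $\chi \in \widehat{\cO/\fa'}$, where $\mu = \nu^{\oplus r}$ is the $r$-fold additive convolution with $r = r(\gamma,d)$ sufficiently large.

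A direct application of Plancherel's identity forces the projection of $\mu$ to $\cO/\fa'$ to be close to uniform in $L^2$, so that the support of $\mu$ modulo $\fa'$ has size close to $|\cO/\fa'|$. A further bounded number of additive convolutions and subtractions (using the standard covering lemma to pass from ``nearly full'' to ``full'' on a coset structure) then shows that $\sum_{r_2}\mathrm{Supp}(\mu_1)\cdots\mathrm{Supp}(\mu_{r_1}) - \sum_{r_2}\mathrm{Supp}(\mu_1)\cdots\mathrm{Supp}(\mu_{r_1})$ contains a full coset system for some ideal $\cL_1 \supset \fa$ inside some ideal $\cL_2 \supset \cL_1$, with $|\cL_2/\cL_1| \geq |\cO/\fa'| \geq |\cO/\fa|^{\tau_0}$, yielding the conclusion.

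The main obstacle is the construction of the filtration in step (ii): the non-concentration assumption \eqref{nonconcentration} is robust enough to be preserved under multiplicative and additive convolution, but passing to conditional measures can destroy it unless the exceptional sets $C_j$ are chosen carefully. As remarked in the footnote of the introduction, one cannot expect a \emph{uniform} exceptional set $C$ over all $j$, so each $C_j$ must be tracked separately and the entropy/Fourier estimates must absorb the resulting losses. Balancing these losses against the number $s'$ of filtration levels one can afford---which ultimately determines $\tau$---is the delicate combinatorial and analytic heart of the proof, and is precisely the place where the careful formulation of \eqref{nonconcentration} in terms of \emph{congruential} subrings (rather than arbitrary subrings) pays off.
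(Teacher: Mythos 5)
Your overall strategy is the same as the paper's: both follow Bourgain's glueing scheme, passing to a sub-ideal $\fa'$, setting up a filtration over the local places with a conditional non-concentration property, invoking Theorem \ref{exp1} place by place, concluding that the support modulo $\fa'$ is nearly full, and finishing with a bounded-generation step (the paper uses Corollary \ref{2346}). However, as written your plan has two genuine gaps. First, the claimed uniform decay $|\hat\mu(\chi)|<|\cO/\fa'|^{-\tau}$ for \emph{every} non-trivial $\chi\in\widehat{\cO/\fa'}$ does not follow from iterating Theorem \ref{exp1}: a character whose component at the place $\cP_i$ is highly imprimitive (say induced from $\cO/\cP_i$ rather than $\cO/\cP_i^{n_i}$) only sees the measure through a small quotient, and Theorem \ref{exp1} can give decay only in terms of that small conductor, not in terms of $q_i=|\cO/\cP_i^{n_i}|$. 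A direct Plancherel argument is then swamped by the many imprimitive characters with weak decay. The paper circumvents this by (a) convolving $\bP$ additively with the normalized counting measure $m_Z$ on an auxiliary set $Z=\sum_i Z_i$, where $Z_i$ is a full residue system modulo $\cP_i^{\bar n_i}$; the extra factor $\sum_{w\in Z_{s+1}}\chi_z(w)$ vanishes exactly on the small-conductor characters at each place, while for the remaining characters the local decay $|\cO/\cP_i|^{-(n_{s+1}-a_{s+1})\tau r_2}$ is summable once $r_2\tau\geq 2$; and (b) replacing the global $L^2$ argument by a relative-entropy estimate $\int F_{s+1}\log^+ F_{s+1}\leq \int F_s\log^+F_s+O(1)+O(\rho_1\log q_{s+1})$ driven by the key bound $\int\max_\theta \bE_{s+1}[F_0](\xi,\theta)\,d\nu_s(\xi)\leq 2$, which only requires Fourier analysis over the single factor $\cO/\cQ_{s+1}$ conditioned on the history $\xi$.

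Second, the construction of the filtration itself --- which primes to retain and how to choose the exceptional sets $C_i^{(s)}$ so that \eqref{1706} and \eqref{1621} hold --- is the bulk of the paper's work (its Section 7) and is absent from your proposal beyond the acknowledgement that it is delicate. The paper's construction proceeds by a greedy screening over a dyadic family of test classes $\frak S_j(\cO';t_1,t_2)$ of affine translates of congruential rings at each candidate place, discarding tubes on which the conditional measure concentrates; if every remaining place fails (i.e.\ the discarded mass exceeds $\rho_1$ for some $\mu_i$), a pigeonhole argument assembles a single affine translate $a+bR$ of a congruential ring in $\cO/\fa$ of index $>q^{\epsilon}$ carrying mass $\gg q^{-o(\gamma)}$, contradicting the global hypothesis \eqref{nonconcentration}. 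This contradiction is exactly what guarantees $|\cO/\fa'|\geq|\cO/\fa|^{\rho_0}$, i.e.\ the lower bound on $\tau$; without it your step (i) is unsupported. To turn your outline into a proof you would need to supply both the auxiliary-set/entropy mechanism (or an equivalent treatment of imprimitive characters) and this screening construction.
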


\begin{lemma}\label{secondreduction} To prove Conjecture \ref{BGC}, it suffices to assume $\mathfrak a=\prod_{i\in I}\mathcal P_i^{n_i}$ with $N(\mathcal P_i)$ and $N(\mathcal P_j)$ are coprime if $i\neq j$. 
\end{lemma}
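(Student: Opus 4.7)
My approach is to use the Chinese Remainder Theorem to decompose $\cO/\fa$ according to rational primes and then select a single prime ideal of $\cO$ above each rational prime, producing an ideal with pairwise coprime norms. Writing $\fa = \prod_p \fa_p$ with $\fa_p = \prod_{\cP\mid p,\,\cP\mid\fa}\cP^{n_\cP}$ gives $\cO/\fa \cong \prod_p \cO/\fa_p$; the key structural fact is that at most $d = [K:\mathbb{Q}]$ prime ideals of $\cO$ lie over any rational prime, so each $\fa_p$ has at most $d$ prime factors.

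First I would prune prime factors of $\fa$ on which $A$ has low density: set $I_g = \{i : |\pi_{\cP_i^{n_i}}(A)| \geq |\cO/\cP_i^{n_i}|^{\delta/2}\}$ and $\fa_g = \prod_{i\in I_g}\cP_i^{n_i}$, with complementary $\fa_b = \fa/\fa_g$. Using $|\pi_{\fa_b}(A)| \leq \prod_{i\notin I_g} |\pi_{\cP_i^{n_i}}(A)| < |\cO/\fa_b|^{\delta/2}$ together with $|\pi_\fa(A)| \leq |\pi_{\fa_g}(A)|\cdot|\pi_{\fa_b}(A)|$ from CRT yields $|\pi_{\fa_g}(A)| \geq |\cO/\fa_g|^\delta$ and, after rearrangement, $N(\fa_g) \geq N(\fa)^{\delta/(2-\delta)}$. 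Next, within each rational prime $p$ appearing in $\fa_g$ let $I_{g,p} = \{i\in I_g : \cP_i\mid p\}$ and select $i(p)\in I_{g,p}$ maximizing $|\cO/\cP_{i(p)}^{n_{i(p)}}|$, forming $\fa' := \prod_p \cP_{i(p)}^{n_{i(p)}}$; since $|I_{g,p}| \leq d$, the max-versus-geometric-mean argument yields $N(\fa') \geq N(\fa_g)^{1/d} \geq N(\fa)^{\delta/(d(2-\delta))}$, and by construction $|\pi_{\cP_{i(p)}^{n_{i(p)}}}(A)| \geq |\cO/\cP_{i(p)}^{n_{i(p)}}|^{\delta/2}$ for each $p$.

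Now apply the assumed coprime-norm case of Conjecture \ref{BGC} separately to each single-prime input $\cP_{i(p)}^{n_{i(p)}}$, obtaining outputs $(\fa''_p, a_p)$ with $|\pi_{\fa''_p}(\mathbb{Z}a_p)| \geq N(\cP_{i(p)}^{n_{i(p)}})^{\epsilon_0}$ where $\epsilon_0 = \epsilon(\delta/2, d)$. Set $\fa'' := \prod_p \fa''_p$ (these are pairwise coprime ideals, being supported on distinct primes over distinct rational primes) and pick $a \in \cO$ by CRT with $a \equiv a_p \pmod{\fa''_p}$. The containment $\fa^{C_1} \subset \fa''$ holds by checking $\cP$-adic valuations prime by prime. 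Moreover, the order of $a_p$ in $\cO/\fa''_p$ is a $p$-power (since $\fa''_p$ is supported on $\cP_{i(p)}\mid p$), so the orders for distinct rational primes are pairwise coprime, giving $|\pi_{\fa''}(\mathbb{Z}a)| = \prod_p |\pi_{\fa''_p}(\mathbb{Z}a_p)| \geq N(\fa')^{\epsilon_0} \geq N(\fa)^{\epsilon_0\,\delta/(d(2-\delta))}$, which is the desired lower bound.

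The main obstacle is verifying the sum-product containment $\pi_{\fa''}(\mathbb{Z}a) \subset \pi_{\fa''}(\sum_{C_3}A^{C_2} - \sum_{C_3}A^{C_2})$ for uniform $C_2, C_3$ depending only on $\delta, d$: the coprime-norm conjecture produces, for each $p$ and each $k \in \mathbb{Z}$, an element $s_{k,p}$ of the sum-product set with $s_{k,p} \equiv ka \pmod{\fa''_p}$, but one must find a single $s_k$ in the sum-product set congruent to $ka$ modulo $\fa''_p$ for all $p$ simultaneously. Resolving this CRT gluing within the sum-product set is the delicate step; a plausible remedy is to enlarge the sum-product set by absorbing the CRT idempotents of $\cO/\fa''$ (constructed via Bezout-type identities in $\cO$), ensuring CRT-saturation at the cost of bumping $C_2, C_3$ to larger constants still depending only on $\delta, d$.
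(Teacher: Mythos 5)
There is a genuine gap, and it sits exactly where you flag it: the CRT gluing of the sum-product containments cannot be carried out. Applying the coprime-norm case separately to each $\cP_{i(p)}^{n_{i(p)}}$ gives, for each rational prime $p$ and each $k$, some element $s_{k,p}\in\sum_{C_3}A^{C_2}-\sum_{C_3}A^{C_2}$ with $s_{k,p}\equiv ka\ (\mathrm{mod}\ \fa''_p)$, but there is no reason a single element of the sum-product set hits the prescribed residue modulo every $\fa''_p$ simultaneously. Your proposed remedy --- ``absorbing the CRT idempotents'' --- is not legitimate: the idempotents of $\cO/\fa''$ are specific ring elements that need not lie in any bounded sum-product set of $A$, and the whole content of the conjecture is that one may only use $A$ through boundedly many sums and products. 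Moreover the number of rational primes dividing $N(\fa)$ is unbounded, so even a per-prime correction would destroy the uniformity of $C_2,C_3$. A secondary issue: your pruning/selection only yields density of $\pi_{\cP_{i(p)}^{n_{i(p)}}}(A)$ in each single factor, which does not imply $|\pi_{\fa'}(A)|\geq N(\fa')^{\delta'}$ for the product $\fa'$ (the projection to a product can be as small as the largest single-factor projection), so you could not even invoke the coprime case on $\fa'$ as a whole with the estimates you derived.

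The correct route --- and the paper's --- avoids gluing entirely by exploiting that the conclusion of Conjecture \ref{BGC} is already ``partial'': it only asks for one ideal $\fa'$ with $\fa^{C_1}\subseteq\fa'$ and an arithmetic progression of length $\geq N(\fa)^{\epsilon}$. Partition the primes dividing $\fa$ into $w\leq d$ blocks $\fa_1,\dots,\fa_w$ so that within each block the norms are pairwise coprime (possible since at most $d$ primes of $\cO$ lie over any rational prime). From $|\pi_{\fa}(A)|\leq\prod_j|\pi_{\fa_j}(A)|$, some block satisfies $|\pi_{\fa_j}(A)|\geq q^{\delta/d}\geq|\cO/\fa_j|^{\delta/d}$, and one applies the coprime-norm case to that single block: the resulting $\fa'$ automatically contains $\fa^{C_1}\subseteq\fa_j^{C_1}$, and $|\pi_{\fa'}(\mathbb Z a)|\geq N(\fa_j)^{\epsilon}\geq N(\fa)^{\epsilon\delta/d}$. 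No cross-prime combination of sum-product sets is ever needed.
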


\begin{remark}
We prefer to work with $\fa$ such that co-prime factors of $\fa$ have co-prime indices, because we need to deal with subrings $R<\cO/\fa$. Since $\{|\pi_{\cP_i^{n_i}} (R)|=p_i^{d_in_i}\}_{i\in I}$ are mutually co-prime, by Goursat's Theorem,  we have the following simple decomposition for $R$:
\begin{align}
R\cong \prod_{i\in I}\pi_{\cP_i^{n_i}} (R).
\end{align}
The above decomposition may not hold if different $\cP_i$ divides a same $(p)$ for a prime number $p$. 
\end{remark}

\begin{proof}[Proof of Lemma \ref{secondreduction}] We write $\mathfrak a=\prod_{1\leq i\leq w}\mathfrak a_i$, where we have a decomposition of $I$ into $w$ sets 
\begin{align}
I=I_1\sqcup I_2\sqcup \cdots\sqcup I_w,
\end{align} 
and each $\mathfrak a_j=\prod_{i\in I_j}\mathcal P_i^{n_i}$ so that $N(\mathcal P_{i_1})$, $N(\mathcal P_{i_2})$ are coprime if $i_1, i_2\in I_j$, $i_1\neq i_2$.  By the Chinese Remainder Theorem, 
\begin{align} \label{decomposition1}
\mathcal O/\mathfrak a\cong \mathcal O/\mathfrak a_1\times\cdots\times \mathcal O/\mathfrak a_w. 
\end{align}

{Since $[K:\mathbb Q]=d$, we can find such a decomposition with $w\leq d$.  Then for some $\mathfrak a_i$, $\pi_{\mathfrak a_i}(A)>|\pi_{\mathfrak a }(A)|^{\frac{1}{w}}>q^{\delta/d}$. Obviously, we have $|\cO/\mathfrak a|\leq q$. Then the conclusion of Conjecture \ref{BGC} for $A$ and $\mathfrak a_i$ implies the conclusion for $A$ and $\mathfrak a$.}

\end{proof}
\begin{proof}[Proof of Theorem \ref{sp1} $\Rightarrow$ Theorem \ref{mainthm}]
We can thus assume coprime factors of $\mathfrak a$ has coprime norms. Recall $|\pi_{\mathfrak a}(A)|>q^{\delta}$.  We take a \emph{minimal} $a_0+b_0 R_0 \subset \cO/\fa $, $a_0, b_0\in \mathcal O/\mathfrak a$ and $R_0$ a unital subring of $\mathcal O/\mathfrak a$, such that 
\begin{align}\label{2336}
|\pi_{\mathfrak a}(A)\cap (a_0+b_0R_0)|>[\mathcal O/\mathfrak a: b_0R_0]^{-\frac{\delta}{2}}|\pi_{\fa}(A)|,
\end{align}
where minimal means there does not exist $a_1+b_1R_1\subsetneq a_0+b_0R_0$ such that \eqref{2336} holds with $a_0+b_0R_0$ replaced by $a_1+b_1R_1$. 
The triple $a_0,b_0, R_0$ exists because $\cO/\fa$ is finite.  \par

Let $\fa_1=\langle \fa, \pi_\fa^{-1}(b_0)\rangle$, and $\fa=\fa_1\fa_2$.  Then by \eqref{2336}, 
\begin{align}
[\cO:\fa_2]=[\fa_1:\fa]=|b_0\mathcal O/{\mathfrak a}|\geq |\pi_{\mathfrak a}(A)\cap (a_0+b_0R_0)|\geq q^{\frac{\delta}{2}}.
\end{align}
We take a set $A'\subset \mathcal O$, such that $A'\mapsto \pi_{\mathfrak a}(A): x\mapsto a_0+b_0x (\mod \mathfrak a)$ is a bijection. Then $|A'|>q^{\delta/2}$, and it follows from the minimality of $a_0+b_0R_0$ that $\pi_{\fa_2}(A')$ is not concentrated in any affine translates of congruential rings in $\pi_{\fa_2 }(R_0)$, i.e. for any $a_2, b_2\in \cO/\fa_2, R_2<\cO/\fa_2$ such that $a_2+b_2R_2\subset \pi_{\fa_2 }(R_0)$ (which implies $a_2\in \pi_{\fa_2 }(R_0)$ and $b_2R_2\subset \pi_{\fa_2 }(R_0)$ ), we have 
\begin{align} 
|\pi_{\fa_2}(A')\cap(a_2+b_2R_2)|\leq [\pi_{\fa_2 }(R_0):b_2R_2]^{-\frac{\delta}{2}}|\pi_{\fa_2}(A')|.
\end{align}

Write $\fa_2=\prod_{i\in I}\cP^{w_i}, 0\leq w_i\leq n_i$.  We decompose $\fa_2$ as $\fa_2=\fa_s\fa_l$, where
  $$\fa_s=\prod_{i\in I, w_i\leq N_1}\mathcal P_i^{w_i}, $$
and 
$$\fa_l=\prod_{i\in I, w_i> N_1}\mathcal P_i^{w_i},$$
for some integer $N_1=N_1(\delta)$ specified at \eqref{1059}.  \par

\noindent{\bf Case 1}: $[\mathcal O/\fa_s]>[\mathcal O/\fa_2]^{1/2}>q^{\delta/4}$, i.e. small exponent factors have a significant contribution to $\fa_2$. \par
 We can further reduce our analysis to square free modulus as follows:  {by the minimality of $a_0+b_0R_0$, we have 
 $$|\pi_{\fa_s}(A')|\geq |R_0(\mod \fa_s)|^{\frac{\delta}{2}}\geq [\cO:\fa_s]^{\frac{\delta}{2d}} \geq q^{\frac{\delta^2}{8d}}.$$

  }

 Let $\fa_3=\prod_{\cP_i\supset \fa_s}\cP_i$, the square free part of $\fa_s$.  We further take $a_3+b_3R_3 \subset \cO/\fa_s$ minimal among all $a_3,b_3\in \cO/\fa_s$, all $R_3$ unital subrings of $\cO/\fa_s$ containing $\fa_3/\fa_s$, such that 
 \begin{align}\label{135}
 |\pi_{\fa_s}(A')\cap (a_3+b_3R_3)|{\geq} [\cO/\fa_s: b_3R_3]^{-\frac{\delta}{4d}}|\pi_{\fa_s}(A')|. 
 \end{align}
  \par

{ 
Write $\fa_s'=\langle\pi_{\fa_s}^{-1}(b_3), \fa_s\rangle=\prod_{i\in I'} \mathcal P_i^{u_i}$ for some $I'\subset I$, and $\fa_4=\prod_{i\in I'} \mathcal P_i$. \eqref{135} then implies
 $$|\cO/\fa_4|>|\cO/\fa_{3}'|^{\frac{1}{N_1}}>|\cO/\fa_s|^{\frac{\delta}{4dN_1}}>q^{\frac{\delta^2}{16dN_1}}.$$
 }

For each $\cP_i\supset \fa_4$, it follows from Hilbert ramification theorem that there is a field $K_i< K$ with ring of integers $\cO_i< \cO$ such that $R_3(\mod \cP_i)=\cO_i/(\cO_i\cap \cP_i)$.  Since the number of subfields of $K$ is $O_d(1)$, there exists $K'$ such that $\fa_5=\prod_{i: K_i=K'}\cP_i$ satisfies $$|\cO/\fa_5|>|\cO/\fa_4|^{\Omega_d(1)}>q^{\frac{\delta^2\Omega_d(1)}{N_1}}.$$  

Take $A''\subset \cO$ such that $$A'\mapsto \pi_{\fa_s}(A')\cap a_3+b_3R_3: x\mapsto a_3+b_3x (\mod \fa_s)$$ gives a bijection.  Let $\mu_0$ be the uniform probability measure supported on {$A'\subset \cO$}, and let $\tilde\mu$ be the push-forward of $\mu_0$ by $\pi_{\fa_5}$, then by the minimality of $a_3+b_3R_3$, $\tilde\mu$ satisfies the non-concentration assumption of $\mu_i$ in Theorem \ref{sp1} with {$\fa=\fa_5$, $\gamma=\frac{\delta}{4d}$,  and $\epsilon=\epsilon(\frac{\delta}{4d})$.} The conclusion of Theorem \ref{sp1} for $\mu_i=\tilde{\mu}$ easily gives Theorem \ref{mainthm} for the original set $A$.
 
\noindent{\bf Case 2}: $[\mathcal O/\fa_l]>[\mathcal O/\fa_2]^{1/2}>q^{\frac{\delta}{4}}$. \par

Take 
\begin{align}\label{219}
{C>\max\{\frac{8}{\delta}, \frac{2}{\epsilon_1}\}}
\end{align} 
where  $\epsilon_1=\epsilon(\frac{\delta}{4})$ is the implied constant for $\epsilon$ at Theorem \ref{sp1}. 

Apply Theorem \ref{structure} to $R_0$ with $C$ given at \eqref{219}. Let 
\begin{align}\label{1059}
N_1=n(C) 
\end{align}
be the implied {lower bound at \eqref{lowerbd}},
and for each $\cP_i\supset \fa_l$, let $K_i<K$ be the implied subfield for the ring $R_0(\mod \cP_i)$, $\cO_i$ be the ring of integer of $K_i$, and let $a_i$, $b_i$ be the implied integers.  Since there are $O_d(1)$ many subfields of $K$, there exists $K'$ such that, if we set 
$$J=\{j\in I: w_j>N_1, K_j=K'\},$$
then 
$$\fa_6:=\prod_{\substack{j\in J }}\cP_j^{w_j}$$
satisfies $$|\cO/\fa_6|>|\cO/\fa_2|^{\Omega_d(1)}.$$

For each $\cP_j\supset \mathfrak a_6$, let $a_j, b_j$ be the implied integers for $a$ and $b$ from Theorem \ref{structure}. \par
 Let $\fa_7=\prod_{\substack{j\in J }}\cP_j^{b_j}$ and $\fa_8=\prod_{\substack{j\in J }}\cP_j^{a_j}$. Then $\fa_8/\fa_7\subset R_0(\mod \fa_7)$, and
$$[\cO/\fa_8]<|\cO/\fa_7|^{\frac{1}{C}}.$$

Let $\mu_0$ be the uniform probability measure supported on the set $A'\subset \cO$, and let $\tilde{\mu}$ be the pushfoward of $\mu_0$ under the projection $\cO \rightarrow\cO/\fa_7$.  We verify in the following that $\tilde{\mu}$ satisfies the non-concentration assumptions of Theorem \ref{sp1} for $\mu_i$, with $\fa=\fa_7$, $\gamma=\frac{\delta}{4}$, $\epsilon_1=\epsilon(\frac{\delta}{4})$ be the implied constant from Theorem \ref{sp1}, then the conclusion of Theorem \ref{sp1} for $\mu_i=\tilde{\mu}$ easily gives Theorem \ref{mainthm}.  \par
 Let $R<\cO/\fa_7$ be congruential, $a,b\in \cO/\fa_7$ satisfy 
\begin{equation}
[\cO/\fa_7: bR]>|\cO/\fa_7|^{\epsilon_1}.
\end{equation}
Since  $R$ is congruential and $\fb$ is an ideal, $R\cap (\fa_8/\fa_7)=rR'$ for some $r\in \cO/\fa_7$, and $R'$ is a congruential ring.  We have 
\begin{align}
\tilde\mu(a+bR)\leq \sum_{x\in R/ rR'} \tilde\mu(a+bx+brR')
\end{align}
We observe $ brR'\subset \fa_8/\fa_7\subset R_0(\mod \fa_7)$.  Since $\tilde{\mu}(a+bx+brR')\neq 0$ only if $a+bx\in R_0$, in this case, by the minimality of $a_0+b_0R_0$ at \eqref{2336} and by the definition of $\tilde{\mu}$,  we have 
\begin{align}
\nonumber\tilde{\mu}(a+bx+brR')\leq & [R_0/\fa_7: brR']^{-\frac{\delta}{2}}\\
\nonumber\leq& [\cO/\fa_7: br R']^{-\frac{\delta}{2}}[\cO/\fa_7: R_0/\fa_7]^{\frac{\delta}{2}}\\
\leq &[\cO/\fa_7: bR]^{-\frac{\delta}{2}}|\cO/\fa_7|^{\frac{\delta}{2C}}
\end{align}
Therefore, 
\begin{align} \nonumber \tilde\mu(a+bR)\leq & [\cO/\fa_7: bR]^{-\frac{\delta}{2}}|\cO/\fa_7|^{\frac{\delta}{2C}}\cdot [R: rR'] \\
\nonumber \leq  &[\cO/\fa_7: bR]^{-\frac{\delta}{2}}|\cO/\fa_7|^{\frac{\delta}{2C}}\cdot [\cO/\fa_7: \fa_8/\fa_7] \\
\nonumber \leq  &[\cO/\fa_7: bR]^{-\frac{\delta}{2}}|\cO/\fa_7|^{\frac{2}{C}}\\
\leq  &[\cO/\fa_7: bR]^{-\frac{\delta}{4}},
\end{align}
which gives Condition \eqref{nonconcentration} with $\gamma=\frac{\delta}{4}$. 
\end{proof}

\newpage

\section{Amplification}
In this section, we will prove Theorem \ref{sp1} assuming a decomposition of $\cO/\fa$ constructed in Section \ref{decomposition}. We deal with the case that $\fa$ is a product of large powers of prime ideals.  The case that $\fa$ is square free can be dealt in a similar way with minor modifications, for which we omit the details. The calculation in this section is almost identical to that in Section 3, Part II of \cite{Bou08}.  However, to realise the construction we assume, we need to add significantly more twists compared to the one given in \cite{Bou08}, and this is done in Section \ref{decomposition}.

\par 
Given $\gamma$ as in Theorem \ref{sp1}, this construction depends on five parameters $\rho_0$, $\rho_1$, $\rho_2$, $\rho_3$, and $\rho_4$, which all eventually depends on $\gamma$.  Their values, together with $r_1, r_2, \tau, \epsilon$ are specified as follows, for which Theorem \ref{sp1} holds.  We prefix a constant $\kappa<\frac{1}{20}$.  
Take 
\begin{align}
\nonumber&\rho_3=\frac{\gamma}{2},\\
\nonumber &r_1=k(\rho_3) \text{ as given in Theorem \ref{exp1}},\\
\nonumber &\tau=\tau(\rho_3) \text{ as given in Theorem \ref{exp1}},\\
\nonumber & r_2=\lceil\frac{2}{\tau}\rceil,\\
\nonumber &\rho_1=\frac{\kappa}{8r_1r_2},\\
\nonumber &\rho_0=\frac{\rho_1\rho_2}{8C(d)[\log\frac{1}{\rho_2}]^2r_1},\\
\nonumber &\rho_4=\frac{\kappa\rho_0}{4},\\
\nonumber &\rho_2=\epsilon(\rho_3)\rho_4, \text{where $\epsilon(\rho_3)$ is as given in Theorem \ref{exp1}},\\
\label{1748}&\epsilon=\frac{\rho_1\rho_2}{4C(d)r_1[\log \frac{1}{\rho_2}]^2}.
\end{align}
where in \eqref{1748}, $C(d)$ is a positive constant only depends on $d$.

Recall $\fa=\prod_{i\in I}\cP_i^{n_i}$ and $q=|\cO/\fa|$.  Write $\cQ_i=\cP_i^{n_i}$ and $q_i=|\cO/\cQ_i|$.\par

We let 
\begin{align}\label{1104}
 \bar{n}_i=[\rho_4n_i]. 
\end{align}
We will construct a set $J\subset I$, rearranging indices and assuming $J=\{1,2, \cdots, T\}$, such that
\begin{align}
|\cO/\prod_{i\in J} \cP_i^{n_i} |>|\cO/\fa|^{\rho_0},
\end{align}
and  for each $\mu_i, 1\leq i\leq k$, and for each $1\leq s< T$, we decompose $\cO/\fa$ into a union
\begin{align}\label{1339}
\cO/\fa=B_i^{(s)}\sqcup C_i^{(s)}
\end{align}
of a ``good'' set $B_i^{(s)}$ and a ``junk'' set $C_i^{(s)}$, with 
\begin{align}\label{1706}
\mu_i(C_i^{(s)})<\rho_1,
\end{align}
and the set $B_i^{(s)}$ satisfies the condition:  For any $\xi\in \cO/\prod_{1\leq i\leq s}\cQ_i$, for any congruential ring $R<\cO/\cQ_{s+1}$ and $a,b\in \cO/\cQ_{s+1}$ with 
$$[\cO/\cQ_{s+1}:bR]>q_{s+1}^{\rho_2},$$
we have 
\begin{align}\label{1621}
\nonumber&\mu_i\left(\{x\in B_i^{(s)}: \pi_{\cQ_1\cdots \cQ_s}(x)=\xi, \pi_{\cQ_{s+1}}\in a+bR\}\right)\\<&[\cO/\cQ_{s+1}: bR]^{-\rho_3}\mu_i\left(\{x\in B_i^{(s)}: \pi_{\cQ_1\cdots \cQ_s}(x)=\xi\}\right) 
\end{align}

Assuming the decomposition above, we will iteratively apply Theorem \ref{exp1} to perform a multiscale analysis with respect to the filtration $\{\cO/\prod_{1\leq j\leq s}\cQ_j\}_{1\leq s\leq T}$. \par

Take $r_1=r_1(\gamma), r_2=r_2(\gamma)\in\mathbb Z_+$ sufficiently large.  We consider the map $$\psi: \Omega\rightarrow \cO/\fa$$ defined as
\begin{align*}
&\psi((x_1, \cdots, x_{r_1r_2+1}))\rightarrow x_1\cdots x_{r_1}+\cdots+x_{r_1(r_2-1)+1}\cdots x_{r_1r_2}+x_{r_1r_2+1},
\end{align*}
where $$\Omega=\underbrace{\cO/\fa\times\cdots \times \cO/\fa}_{r_1r_2 \text { copies}}\times Z \subset (\cO/\fa)^{r_1r_2+1}$$ and $$Z=\sum_{i\in J}Z_i ,$$ where we choose $Z_i$ to be a set of elements in $\cO/\fa$ such that the projection map $$Z_i\rightarrow \mathcal \cO (\mod \mathcal P_i^{\bar{n}_i})$$ is bijective and $Z_i\equiv 0(\mod \mathcal P_j^{{n}_j})$ for any $j\neq i$.  Write $\bar{q}_j=|\cO/\mathcal P_j^{\bar{n}_j}|$. \par
For $x\in \pi_{Q_1\cdots Q_s}(\Omega)$, let $\psi_s(x)=\pi_{Q_1\cdots Q_s}\circ \psi(\bar{x})$ where $\bar{x}$ is any lift of $x$ in $\Omega$. \par
Let $$\bP=\underbrace{(\mu_1\otimes\cdots \otimes \mu_{r_1})*\cdots * (\mu_1\otimes\cdots\otimes \mu_{r_1})}_{r_2 \text{ copies}}* m_Z,$$
where $m_Z$ is the normalized counting measure on $Z$.  Hence, $\bP$ is a probability measure on $\Omega$. \par
Fix $1\leq s<T$.  Decompose 
$$\Omega=\Omega_0\cup \Omega',$$
where $$\Omega_0=\underbrace{(B_1^{(s)}\times\cdots \times B_{r_1}^{(s)})\times\cdots\times (B_1^{(s)}\times\cdots\times B_{r_1}^{(s)})}_{r_2 \text{ copies}}\times Z$$

For notational convenience, for integers $1\leq i,j\leq r_1r_2$, we let $B_i^{(s)}=B_j^{(s)}$ and $\mu_i=\mu_j$ if $i\equiv j(\mod r_1)$.  \par
 Let $\nu$ (resp., $\nu_s$) be the normalized counting measure on $\cO/\fa$ (resp., $\cO/\prod_{1\leq i\leq s}\cQ_i$). Let $F$ be the density of the image measure $\psi[\mathbb P]$ with respect to $\nu$:
 \begin{align}\label{1629}
 F(\xi)=q\bP(\{x\in\Omega: \psi(x)=\xi\}).
 \end{align}
 We also put $F_s=\bE_s[F]$, where $\{\bE_s\}_{1\leq s\leq T}$ are the conditional expectation operator with respect to the filtration $\{\prod_{1\leq i \leq s}\cQ_i\}_{1\leq s\leq T}$.  Then, for $\xi\in \cO/\prod_{1\leq i \leq s}\cQ_i$, 
 \begin{align}
 F_s(\xi)=q_1\cdots q_s \bP(\{x\in\Omega: \pi_{\prod_{1\leq i \leq s}\cQ_i}\psi(x)=\xi\}). 
 \end{align}

From \eqref{1706}, we have 
\begin{align}\label{1732}
\mathbb P(\Omega')<1-(1-\rho_1)^{r_1r_2}<r_1r_2\rho_1.
\end{align}

Let 

\begin{align}
\mathbb P=\mathbb P\big\vert_{\Omega_0}+\mathbb P\big\vert_{\Omega'}=\mathbb P_0+\mathbb P'
\end{align}
and decompose 
$$F=\frac{d\psi[\mathbb P_0]}{d\nu}+\frac{d\psi [\mathbb P']}{d\nu}=F_0+F'.$$
Then we have
$$F_{s+1}=\mathbb E_{s+1}[F_0]+\mathbb E_{s+1}[F'].$$

From \eqref{1732}, we have

$$\int F'd{\nu} < r_1r_2\rho_1,$$
so that
$$\int \mathbb E_{s+1}[F']<r_1r_2\rho_1.$$
\medskip

Our next aim is to estimate 
\begin{align}
\int\max_{\theta\in \cO/\mathcal Q_{s+1} }\mathbb E_{s+1}[F_0](\xi, \theta)d\nu_s(\xi) .
\end{align}

We have, 
\begin{align}
\nonumber&\mathbb E_{s+1}[F_0](\xi,\theta)\\
\nonumber=& {q_1\cdots q_{s+1}}\bP(\{x\in B_1^{(s)}\times\cdots\times B_{r_1r_2}^{(s)}\times Z: \pi_{Q_1\cdots Q_{s+1}}\psi(x)=(\xi, \theta) \}) \\
\leq & {q_1\cdots q_{s+1}}\sum_{\substack{ y\in \pi_{\mathcal Q_1\cdots \mathcal Q_s}(B_1^{(s)}\times\cdots\times B_{r_1r_2}^{(s)}\times (Z_1\times\cdots\times Z_s)) \\ \psi_s(y)=\xi}} \nonumber\\ &\bP(\left\{x\in B_1^{(s)}\times\cdots\times B_{r_1r_2}^{(s)}\times Z)\Big\vert \substack{ \pi_{Q_1\cdots Q_s}(x_i)= y_i(i\leq r_1r_2+1)\\  \pi_{\mathcal Q_{s+1}}\psi (x)=\theta }  \right\})
\end{align}

Therefore, 
\begin{align}
\nonumber &\max_{\theta\in\cO/\mathcal Q_{s+1}} \mathbb E_{s+1}[F_0](\xi,\theta) \\
\nonumber \leq & {q_1\cdots q_{s+1}}\sum_{\substack{ y\in \pi_{\mathcal Q_1\cdots \mathcal Q_s}(B_1^{(s)}\times\cdots\times B_{r_1r_2}^{(s)}\times (Z_1\times\cdots\times Z_s)  \\ \psi_s(y)=\xi}} \nonumber\\&\max_{\theta\in\cO/\mathcal Q_{s+1} }  \bP(\left\{x\in B_1^{(s)}\times\cdots\times B_{r_1r_2}^{(s)}\times Z)\Big\vert \substack{ \pi_{Q_1\cdots Q_s}(x_i)= y_i(i\leq r_1r_2+1)\\  \pi_{\mathcal Q_{s+1}}\psi (x)=\theta }  \right\})\end{align}

We use Fourier analysis and apply Theorem \ref{exp1} to estimate $\max_{\theta\in\cO/\mathcal Q_{s+1}}|\{\cdots\}|$ above.  \par

Let $\chi$ be a primitive additive character on $\cO/\cQ_{s+1}$.  Then, all additive characters on $\cO/\cQ_{s+1}$ are given by $\{\chi_z: \chi_z(x):=\chi(zx), {z\in \cO/\cQ_{s+1}}\}$.

For a fixed $y$, let $B_i^{(s)}(y_i)=\{x\in B_i^{(s)}: \pi_{\mathcal Q_1\cdots\mathcal Q_s}(x)=y_i\}$. \par
 We have 
\begin{align}
\nonumber& \bP(\left\{x\in B_1^{(s)}\times\cdots\times B_{r_1r_2}^{(s)}\times Z)\Big\vert \substack{ \pi_{Q_1\cdots Q_s}(x_i)= y_i(i\leq r_1r_2+1)\\  \pi_{\mathcal Q_{s+1}}\psi (x)=\theta }  \right\})\\
\nonumber\leq & \frac{1}{\bar{q}_1\cdots\bar{q}_s}\bP(\{x\in B_1^{(s)}(y_1)\times \cdots B_{r_1r_2}^{(s)}(y_{r_1r_2}) \times Z_{s+1}: \pi_{\mathcal Q_{s+1}}\psi(x)=\theta \})  \\
\nonumber=& \frac{1}{\bar{q}_1\cdots\bar{q}_s}\frac{1}{q_{s+1}}\sum_{z\in \cO/ \mathcal Q_{s+1}}\chi_z(-\theta)\sum_{x\in B_1^{(s)}(y_1)\times \cdots B_{r_1r_2}^{(s)}(y_{r_1r_2})\times Z_{s+1}}\bP(x)\chi_z(\psi(x))   \\
\nonumber=& \frac{1}{\bar{q}_1\cdots\bar{q_{s+1}}}\frac{1}{q_{s+1}}\sum_{z\in \cO/ \mathcal Q_{s+1}}\chi_z(-\theta)\left(\sum_{x\in B_1^{(s)}(y_1)\times \cdots B_{r_1}^{(s)}(y_{r_1})}\mu_1(x_1)\cdots\mu_{r_1}(x_{r_1})\chi_z(x_1\cdots x_{r_1}) \right)^{r_2} \\
&\nonumber \times \sum_{w\in Z_{s+1}}\chi_z(w)  \nonumber \\
\leq &  \frac{1}{\bar{q}_1\cdots\bar{q}_{s+1}}\frac{1}{q_{s+1}}\sum_{z\in\cO/ \mathcal Q_{s+1} } \left\vert \sum_{x\in B_1^{(s)}(y_1)\times \cdots B_{r_1}^{(s)}(y_{r_1})}\mu_1(x_1)\cdots\mu_{r_1}(x_{r_1})\chi_z(x_1\cdots x_{r_1}) \right\vert^{r_2} \times \left\vert  \sum_{w\in Z_{s+1}} \chi_z(w)  \right\vert \label{11271}
\end{align}

If $z\in (\cP_i^{n_{s+1}-\bar{n}_{s+1}}-\{0\} )(\mod \cP_{s+1}^{n_{s+1}})$, then $ \sum_{w\in Z_{s+1}} \chi_z(w)=0$, so we can restrict our attention to the terms $z\not\in \cP_{s+1}^{n_{s+1}-\bar{n}_{s+1}}/\cQ_{s+1}$, for which we estimate the $w$-sum trivially and apply Theorem \ref{exp1} to estimate the $x$-sum. \par
We can thus estimate \eqref{11271} as

\begin{align}\nonumber
\eqref{11271}&\nonumber \leq\frac{1}{\bar{q}_1\cdots\bar{q}_{s}}\frac{1}{q_{s+1}}\prod_{i=1}^{r_1r_2}\mu_i(B_i^{(s)}(y_i)) \\&
\nonumber + \frac{1}{\bar{q}_1\cdots\bar{q}_{s+1}}\frac{1}{q_{s+1}}\\&\sum_{\substack{z\in(\cO-\cP_{s+1}^{n_{s+1}-\bar{n}_{s+1}}) / \mathcal Q_{s+1} }}\left\vert \sum_{x\in B_1^{(s)}(y_1)\times \cdots B_{r_1}^{(s)}(y_{r_1})}\mu_1(x_1)\cdots\mu_{r_1}(x_{r_1})\chi_z(x_1\cdots x_{r_1}) \right\vert^{r_2}  \times \left\vert  \sum_{w\in Z_{s+1}} \chi_z(w)  \right\vert \nonumber  \\
\nonumber &\leq \prod_{i=1}^{r_1r_2}\mu_i(B_i^{(s)}(y_i)) \\&\times\left( \frac{1}{\bar{q}_1\cdots\bar{q}_{s}}\frac{1}{q_{s+1}}  + \frac{1}{\bar{q}_1\cdots\bar{q}_{s}}\frac{1}{q_{s+1}}\sum_{0\leq a_{s+1}< n_{s+1}-\bar{n}_{s+1}}\sum_{\substack{z\in\cO/ \mathcal Q_{s+1} \\ \mathfrak p_{s+1}^{a_{s+1}}||z } }|\cO/\cP_i|^{-(n_{s+1}-a_{s+1})\tau r_2} \right)  \nonumber\\ 
\label{0201} &\leq \left( \frac{2}{\bar{q}_1\cdots\bar{q}_{s}q_{s+1}}\right)\prod_{i=1}^{r_1r_2}\mu_i(B_i^{(s)}(y_i)),
\end{align}
if we let 
\begin{align}
r_2\tau\geq 2.
\end{align}

Notice that the estimate \eqref{0201} is independent of $\theta$. Therefore, we obtain the following fundamental estimate:  
\begin{align}\label{2108}
&\int\max_{\theta\in \cO/\mathcal Q_{s+1} }\mathbb E_{s+1}[F_0](\xi, \theta)d\nu_s (\xi)\nonumber \\
\nonumber\leq& \frac{2q_1\cdots q_{s}\cdot q_{s+1}}{\bar{q}_1\cdots\bar{q}_s q_{s+1}}\int \sum_{\substack{ y\in \pi_{\mathcal Q_1\cdots \mathcal Q_s(B^{r_1r_2}\times Z) } \\ \psi_s(y)=\xi}} \prod_{i=1}^{r_1r_2}\mu_i(B_i^{(s)}(y_i))  d\nu_s(\xi) \\
\nonumber= & \frac{2q_1\cdots q_{s}\cdot q_{s+1}}{\bar{q}_1\cdots\bar{q}_s q_{s+1}}\sum_{\xi\in \cO/\cQ_1\cdots \cQ_s} \frac{1}{q_1\cdots q_s}\sum_{\substack{ y\in \pi_{\mathcal Q_1\cdots \mathcal Q_s(B^{r_1r_2}\times Z) } \\ \psi_s(y)=\xi}} \prod_{i=1}^{r_1r_2}\mu_i(B_i^{(s)}(y_i))  \\
\nonumber= & \frac{2}{\bar{q}_1\cdots\bar{q}_s }\sum_{\substack{ y\in \pi_{\mathcal Q_1\cdots \mathcal Q_s(B^{r_1r_2}\times Z) } }} \prod_{i=1}^{r_1r_2}\mu_i(B_i^{(s)}(y_i))  \\
=&2.
\end{align}
\medskip

For a real valued function $f$ on a space $X$, we let 
$$f^+(x)=\begin{cases}f(x),&\text{ if }f(x)\geq 0, \\0 ,&\text{ otherwise.}\end{cases}$$

Recall the definition for $F$ at \eqref{1629}. Our next aim is to estimate $\int F_T\log^+ F_T$, which gives one way to measure the flatness of $\psi(\mathbb P)$.\par

Since for $\xi\in \cO/ \mathcal \mathcal Q_1\cdots \mathcal Q_s$,
\begin{align}\nonumber
&\sum_{\theta\in \cO/\mathcal Q_{s+1}} \frac{F_{s+1}(\xi, \theta)}{q_{s+1}}\log F_{s+1}(\xi,\theta)\\
 = &\sum_{\theta\in \cO/\mathcal Q_{s+1}} \frac{F_{s+1}(\xi, \theta)}{q_{s+1}}\log F_{s}(\xi)+\sum_{\theta\in \cO/\mathcal Q_{s+1}} \frac{F_{s+1}(\xi, \theta)}{q_{s+1}}\log\frac{F_{s+1}(\xi,\theta)}{F_s(\xi)},
\end{align}
we have 
\begin{align}\nonumber
&\sum_{\theta\in \cO/\mathcal Q_{s+1}} \frac{F_{s+1}(\xi, \theta)}{q_{s+1}}\log^+ F_{s+1}(\xi,\theta)\\
\label{0217} \leq  &\sum_{\theta\in \cO/\mathcal Q_{s+1}} \frac{F_{s+1}(\xi, \theta)}{q_{s+1}}\log^+ F_{s}(\xi)+\sum_{\theta\in \cO/\mathcal Q_{s+1}} \frac{F_{s+1}(\xi, \theta)}{q_{s+1}}\log^+\frac{F_{s+1}(\xi,\theta)}{F_s(\xi)}.
\end{align}

Integrating over $\xi$ on both sides of \eqref{0217} leads to the following inequality:
\begin{align}
\int F_{s+1}\log^+ F_{s+1}d\nu_{s+1}\leq \int F_s\log^+ F_sd\nu_s+\int F_{s+1}\log^+\frac{F_{s+1}}{F_s}d\nu_{s+1}.
\end{align}

Since $F_{s+1}=\mathbb E_{s+1}(F_0)+\mathbb E_{s+1}(F')$, we have 
\begin{align}\nonumber
&\int F_{s+1}\log^+\frac{F_{s+1}}{F_s}d\nu_{s+1}\\\nonumber =&\int_{\mathbb E_{s+1}(F_0)\geq \mathbb E_{s+1}(F') } F_{s+1}\log^+\frac{F_{s+1}}{F_s}d\nu_{s+1} +\int_{\mathbb E_{s+1}(F_0)< \mathbb E_{s+1}(F') } F_{s+1}\log^+\frac{F_{s+1}}{F_s}d\nu_{s+1} \label{1532}
\\\leq& \int F_{s+1}\log^+\left(\frac{2\mathbb E_{s+1}[F_0]}{F_s}\right)d\nu_{s+1}\\&+2\int \mathbb E_{s+1}[F']\log^+\frac{F_{s+1}}{F_s} d\nu_{s+1}\label{1533}
\end{align}

We have 
\begin{align}\nonumber
\eqref{1532}\leq &\left( \int F_{s+1} \right)\log 2d\nu_{s+1}+ \int F_{s+1}\log^+\left(\frac{\max_{\theta\in \mathcal O/\mathcal Q_{s+1}}\mathbb E_{s+1}[F_0](\xi, \theta) }{F_s(\xi)}\right)d\nu_{s+1}\\
\leq & \log 2 +\int \max_{\theta\in \cO/\mathcal Q_{s+1}}\mathbb E_{s+1}[F_0](\xi, \theta) d\nu_s(\xi) <2+\log 2,
\label{1545}
\end{align}
where in the last step we have used \eqref{2108} and that $\log^+ x< x $ if $x>1$ .  

Since $\frac{F_{s+1}}{F_s}\leq q_{s+1}$, it follows that 
\begin{align}\label{1544}
\eqref{1533}<(2\log q_{s+1})\cdot \int\mathbb E_{s+1}[F']< 2r_1r_2\rho_1\log q_{s+1}.
\end{align}

Therefore, from \eqref{1545}, \eqref{1544}, we have
\begin{align}
\int F_{s+1}\log^+F_{s+1}d\nu_{s+1} < \int F_s\log^+ F_sd\nu_s+(2+\log 2)+2r_1r_2\rho_1 \log q_{s+1}.
\end{align}
Write $\tilde q_T=q_1\cdots q_T$ and sum in $s\leq T$, and we obtain 
\begin{align} \label{11321}
\int F_T\log^+ F_T\hspace{1mm}d\nu<(2+\log 2)T+2r_1r_2\rho_1\sum_{1\leq j\leq T}\log q_j <\log (10^{T} (\tilde q_T)^{2r_1r_2\rho_1}).
\end{align}

Therefore, if we let 
$$X=|\pi_{\cQ_1\cdots\cQ_T}(\text{supp}(\underbrace{(\mu_1\otimes\cdots \otimes \mu_{r_1})*\cdots * (\mu_1\otimes\cdots\otimes \mu_{r_1})}_{r_2 \text{ copies}}))|,$$
then it follows from \eqref{11321} that 

\begin{align}\nonumber
1=&\int F_Td\nu_T \leq (10^{T}(\tilde q_T)^{2r_1r_2\rho_1})^2\nu_T[\text{supp} (F_T)]+\int_{F_T>(10^{T}(\tilde q_T)^{2r_1r_2\rho_1})^2} F_Td\nu_T \nonumber\\
<&(10^T(\tilde q_T)^{2r_1r_2\rho_1})^2(\tilde q_T)^{-1}X\cdot |Z|+\frac{1}{2\log (10^T (\tilde q_T)^{2r_1r_2\rho_1})}\left(\int F_T\log^+ F_T\right) \nonumber\\
< &10^{2T}(\tilde q_T)^{4r_1r_2\rho_1-1}X\cdot |Z|+\frac{1}{2},
\end{align}
where
$$|Z|=\prod\bar{q}_j\leq q^{\rho_4}<(\tilde q_T)^{\frac{\kappa}{4}}.$$

Since $10^T< q^{0+}$, it then follows that 
$$X>(\tilde q_T)  ^{1-\kappa}.$$
An application of Corollary \ref{2346} finishes the proof of Theorem \ref{sp1}. 

\newpage
\section{Decompositions of the set $\cO/\fa$ \label{decomposition}}

Recall $\fa=\prod_{i\in I} \mathcal P_i^{n_i}=\prod_i\mathcal Q_i, q=|\mathcal O/\fa|$ and $q_i=|\mathcal O/\mathcal Q_i|$.  The goal of this section is to construct $\hat{\mathcal Q}=\mathcal Q_{j_1}\mathcal Q_{j_2}\cdots$, where ${j_1}, j_2$ is a sequence so that the decompositions of $\cO/\fa$ at \eqref{1339} exists with \eqref{1706} and \eqref{1621} satisfied, and also 
\begin{align}\label{1620}
|\mathcal O/\hat{\mathcal Q}|>|\cO/\fa|^{\rho_0}.
\end{align}

Assume we have already specified $j_1,\cdots, j_s$. Rearranging indices, we assume $j_1=1, \cdots, j_s=s$.  Set $\tilde{Q}=\mathcal Q_{1}\cdots\mathcal Q_{s}$,  and $\tilde{q}=q_{1}\cdots q_{s}$.  Assume 
\begin{align}\label{1323}
\tilde{q}<q^{\rho_0}.
\end{align}
We need to choose $j\neq 1,\cdots, s$ such that for each $1\leq i\leq r_1$, the set $\cO/\fa$ admits a decomposition $\cO/\fa=B_i^{j}\cup C_i^{j}$ that  

\begin{align}
\label{1601}\mu_i(C_i^{j})<\rho_1,
\end{align}
and for any $\xi\in \cO/\prod_{1\leq i\leq s}\cQ_i$, for any congruential ring $R<\cO/\cQ_{s+1}$ and $a,b\in \cO/\cQ_{s+1}$ with 
\begin{align}\label{1619}
[\cO/\cQ_{s+1}:bR]>q_{s+1}^{\rho_2},
\end{align}
the measure $\mu_i$ satisfies the non-concentration condition: 
\begin{align}\label{1620}
\nonumber&\mu_i\left(\{x\in B_i^{(s)}: \pi_{\cQ_1\cdots \cQ_s}(x)=\xi, \pi_{\cQ_{j}}\in a+bR\}\right)\\<&[\cO/\cQ_{j}: bR]^{-\rho_3}\mu_i\left(\{x\in B_i^{(s)}: \pi_{\cQ_1\cdots \cQ_s}(x)=\xi\}\right).
\end{align}

Fix $j\in I-\{1, 2, \cdots, s\}$.  We first introduce a set of dyadic indices $\mathfrak A_j=\{\alpha_k\}\subset [0, n_j]$ of size $\sim \log \frac{1}{\rho_2}$, where $\alpha_0=0$, $ \alpha_1=[\frac{\rho_2 n_j}{2}],  \alpha_2=[{\rho_2 n_j}],  \alpha_3=[{4\rho_2 n_j}], \cdots.$ 

For $t_1, t_2\in \mathfrak A_j$ with $t_1< t_2$, and for $K'<K$ with $\cO'$ the ring of integers of $K'$, define the test set:  $$\frak S_j(\cO'; t_1, t_2)= \{a+bR\subset \cO/\cQ_j: \frak p_j^{t_1}|| b,  R= R_{\cO', t_2-t_1} \},$$
recalling the definition of $R_{\cO', t_2-t_1}$ at \eqref{1329}.

The total number of such $\frak S(\cO'; t_1, t_2)$ is bounded by $C(d)[\log \frac{1}{\rho_2}]^2$. \par

To verify condition \eqref{1620}, instead of screening all $a+bR$, it suffices to consider $a+bR \in \frak S(\cO'; t_1, t_2)$ for all $t_1, t_2\in \mathfrak A_j$ and for all $\cO'<\cO$.  If all members in $$\cup_{\cO'<\cO}\cup_{t_1<t_2}\frak S(\cO'; t_1, t_2)$$ satisfy \eqref{1620}, then all $a+bR$ satisfing \eqref{1619} will also satisfy \eqref{1620}, with $\rho_3$ replaced by $\frac{\rho_3}{4}$.   

\medskip

Let $\xi\in\cO/\tilde{\mathcal Q}$ and $W\subset \cO/\cQ_j$.  
Define the tubes
$$S(\xi)=\{x\in \cO/\fa: \pi_{\tilde{Q}}(x)=\xi\},$$
and 
$$S_j(\xi,W)=\{x\in S(\xi): \pi_{\cQ_j}(x)\in W \}.$$

Fix $\mu_i$, fix $\frak S_j(\cO'; t_1,t_2)$, and we also prefix an order in $\frak S_j(\cO'; t_1, t_2)$. \par 
For any $a+bR\in \frak S_j(\cO'; t_1,t_2)$, let
$$q_{j,\cO'; t_1,t_2}=[\cO/\cQ_j: bR]={|\cO/\cP_j|^{t_1}|\cO'/(\mod \cP_j)|^{t_2-t_1}}$$
  Note that the definition for $q_j(\cO'; t_1,t_2)$ is independent of the choice of $a+bR$.  

Fix $\xi\in \cO/\tilde{\cQ}$. We define the set $C_{i,j}^{\cO'; t_1, t_2}(\xi)$ recursively as follows:  We initially set $C_{i,j}^{\cO'; t_1, t_2}(\xi)=\emptyset$.  Suppose $C_{i,j}^{\cO'; t_1, t_2}(\xi)$ has been defined after screening all elements before $a+bR$ in $\frak S_j(\cO'; t_1,t_2)$.  If 
\begin{align}\label{0947}
\mu_i(S(\xi, a+bR)- C_{i,j}^{\cO'; t_1,t_2}(\xi))\geq q_{j,\cO'; t_1,t_2}^{-\rho_3}(\mu_i S(\xi))
\end{align}
then throw the whole set $S(\xi, a+bR)$ into $C_{i,j}^{\cO'; t_1,t_2}(\xi)$.  The set
$C_{i,j}^{\cO'; t_1, t_2}(\xi)$ is then determined after screening all elements in $\frak S_j(\cO'; t_1,t_2)$.   It is important to note that along this screening process, at most $q_{j,\cO'; t_1,t_2}^{\rho_3}$ many $a+bR$ {are picked up}.  


If $$\mu_i(S(\xi)-\cup_{\cO'<\cO}\cup_{t_1<t_2} C_{i,j}^{\cO'; t_1, t_2}(\xi))\geq \frac{\rho_1}{2}\mu_i(S(\xi)),$$ define
$$B_i^{j}(\xi)=S(\xi)- \cup_{\cO'<\cO}\cup_{t_1<t_2} C_{i,j}^{\cO'; t_1, t_2}(\xi)$$
and 
$$\tilde{C}_i^{j}(\xi)=\emptyset.$$
Otherwise,  define
$$B_i^{j}(\xi)=\emptyset, $$
and
$$\tilde{C}_i^{j}(\xi)=S(\xi)- \cup_{\cO'<\cO}\cup_{t_1<t_2} C_{i,j}^{\cO'; t_1, t_2}(\xi).$$
Then we let 
\begin{align*}
&B_i^{j}=\cup_{\xi\in\cO/\tilde{Q}}B_i^{j}(\xi),\\
&C_{i,j}^{\cO'; t_1, t_2}=\cup_{\xi\in \cO/\cQ_j}C_{i,j}^{\cO'; t_1, t_2}(\xi),\\
&\tilde{C}_i^{j}=\cup_{\xi\in\cO/\tilde{Q}}\tilde{C}_i^j(\xi).
\end{align*}

Thus we arrived at a decomposition 

$$\cO/\fa=B_i^{j}\sqcup C_i^{j} $$
with
$$C_{i}^{j} =\left(\cup_{\cO'<\cO}\cup_{t_1<t_2}C_{i,j}^{\cO'; t_1, t_2}\right)\sqcup \tilde C_i^{j},$$

$$\mu_i(\tilde{C}_i^{(j)})<\frac{\rho_1}{2},$$
and $B_i^{j}$ satisfies condition \eqref{1620} for $B_i^{(s)}$ if $q_j$ is sufficiently large.

%
\medskip

If given $j$, there is $1\leq i_j\leq r_1$ that fails \eqref{1601}, then for some $\cO_j<\cO$ and for some $t_{j,1}, t_{j,2}\in \mathfrak A_j$ with $t_{j,1}<t_{j,2}$, we have

\begin{align}
\mu_{i_j}(C_{i_j,j}^{\cO_j; t_{j,1},t_{j,2}})\geq \frac{\rho_1}{2C(d)[\log\frac{1}{\rho_2}]^2}. 
\end{align}

Furthermore, assume for all $j\not\in\{1,\cdots, s\}$, there is $1\leq i\leq r_1$ that fails \eqref{1601}.  Then by a standard pigeon hole argument, there exists an index $1\leq i_0\leq r_1$, a family of indices $I'\subset I-\{1,\cdots, s\}$, such that $i_j=i_0$, 
and 
\begin{align}
\prod_{j\in I'}q_j\geq \left(\prod_{j\in I-\{1,\cdots, s\}}q_j\right)^{\frac{1}{r_1}}=(\frac{q}{\tilde q})^{\frac{1}{r_1}}.
\end{align}

Write $$q_j^*=q_{j, \cO_j; t_{j,1},t_{j,2}}$$
and 
$$C_j^{*}=C_{i_0,j}^{\cO_j; t_{j,1},t_{j,2}}.$$

Then 
\begin{align}\label{1222}
\sum_{j\in I'}\log q_{j}^* \cdot \mu_{i_0}(C_j^*)\geq \frac{\rho_1}{2C(d)\log [\frac{1}{\rho_2}]^2}\log q', 
\end{align}

where 

\begin{align}\label{0846}
q'=\prod_{j\in I'}q_{j}^*\geq \prod_{j\in I'}q_j^{\rho_2}>  (\frac{q}{\tilde q})^{\frac{\rho_2}{r_1}}.
\end{align}

If we write $z_j=\frac{\log q_{j}^*}{\log q'}$, then $\sum_j z_j=1$ and \eqref{1222} is equivalent to 

\begin{align} \label{0847}
\sum_{j\in I'}z_j\mu_{i_0}(C_j^*)\geq \frac{\rho_1}{2C(d)[\log\frac{1}{\rho_2}]^2},
\end{align}

For a set $J\subset I'$, write $$z_J=\sum_{j\in J}z_j$$ and $$C_J=\cap_{j\in J}C_j^*-\cup_{j\in I'-J}C_j^*.$$

Then \eqref{0847} can be rewritten as 
\begin{align} \label{1356}
\sum_{J\subset I'} z_J\mu_{i_0}(C_J) \geq \frac{\rho_1}{2C(d)[\log\frac{1}{\rho_2}]^2}.
\end{align}

Since $$\sum_J \mu_{i_0}(C_J)=\mu_{i_0}(\cup_{j\in I'} C_j^*)\leq 1,$$ we have from \eqref{1356}, 
\begin{align}\label{1451}
\sum_{J\subset  I'} \mu_{i_0}(C_J){\boldsymbol 1}\left\{z_J>\frac{\rho_1}{4C(d)[\log\frac{1}{\rho_2}]^2}\right\}>\frac{\rho_1}{4C(d)[\log\frac{1}{\rho_2}]^2}.
\end{align}

The number of summands on the left hand side of \eqref{1451} is bounded by $2^{|I|}$.  Therefore, there exists $J\subset  I-\{1,\cdots, s\}$, such that 
\begin{align}\label{2049}
z_J\geq \frac{\rho_1}{4C(d)[\log\frac{1}{\rho_2}]^2},
\end{align}
and 
\begin{align}\label{2256}
\mu_{i_0}(C_J)\geq \frac{\rho_1}{2^{|I|+2}C(d)[\log\frac{1}{\rho_2}]^2}.
\end{align}

{
Since $C_j$ lies in the intersection of $C_j^{*}=\cup_{\xi\in \cO/\tilde{\cQ}}C_j^*(\xi)$ and each $C_j^*(\xi)$ lies in the union of at most $q_j^{\rho_3}$ affine translates, by a standard probabilistic argument, there exists $\xi_0\in \cO/\tilde{\cQ}$, $S(\xi, a_j+b_jR_j) \subset C_j^{*}(\xi_0), \forall j\in J$, such that  
$$\mu_{i_0}(\cap_{j\in J}S(\xi, a_j+b_jR_j )>(\tilde{q})^{-1}(\prod_{j\in J}q_j^*)^{-\rho_3}\frac{\rho_1}{2^{|I|+2}C(d)[\log\frac{1}{\rho_2}]^2}$$
recalling $q_j^*=[\cO/\cQ_j: b_jR_j]$.  \par

If we let the affine translate $a+bR\subset \cO/\fa$ be the intersection of $S(\xi_0)$ and the preimage of $\prod_{j\in J} a_j+b_jR_j$ under the canonical map: $\cO/\fa \mapsto \prod_{j\in J}\cO/\cQ_j$, then 
}
 
\begin{align}\label{1454}
\nonumber& \pi_{\tilde Q}(a+bR)=\xi_0, \\
\nonumber& \pi_{\cQ_j}(a+bR)= a_j+b_jR_j \text{ for }j\in J,\\
\nonumber& \pi_{\cQ_j}(a+bR)=\cO/\cQ_j \text{ for }j\not\in [1,\cdots, s]\cup J, \\
\nonumber& [\cO/\fa: bR]=\tilde{q}\prod_{j\in J}q_j^*,\\
& \mu_{i_0}(a+bR)>(\tilde{q})^{-1}(\prod_{j\in J}q_j^*)^{-\rho_3}\frac{\rho_1}{2^{|I|+2}C(d)[\log\frac{1}{\rho_2}]^2}.
\end{align}
From \eqref{2049}, we have 
\begin{align}\label{2105}
\prod_{j\in J}q_j^*>(\frac{q}{\tilde q})^{\frac{\rho_1\rho_2}{4C(d)r_1[\log\frac{1}{\rho_2}]^2}}.
\end{align} 

On the other hand, 
\begin{align}
[\cO/\fa: bR]=\tilde{q}\prod_{j\in J}q_j^*> q^{\frac{\rho_1\rho_2}{4C(d)r_1[\log \frac{1}{\rho_2}]^2}}\geq q^{\epsilon},
\end{align}
by our choice of $\epsilon$ at \eqref{1748}. 
By the non-concentration property \eqref{nonconcentration} of $\mu_{i_0}$, we have

\begin{align}\label{1455}
\mu_{i_0}(a+bR)< (\tilde{q}\prod_{j\in J}q_j^*)^{-\gamma}.
\end{align}

Recalling $\tilde{q}<q^{\rho_0}$ and \eqref{2105}, our choices of parameters at \eqref{1748} make \eqref{1454} and \eqref{1455} contradict when $q$ is sufficiently large.

\newpage
\appendix
\section{Some Elementary Lemmas}

Let $\chi$ be a primitive additive character on $\cO/\prod_{i\in I}\cP_i^{n_i}$.  Then, all additive characters on $\cO/\prod_{i\in I}\cP_i^{n_i}$ are given by 
$$\chi_z(x):=\chi(zx), \hspace{2mm} z\in \cO/\prod_{i\in I}\cP_i^{n_i}.$$

\begin{lem} \label{0715} Let $\gamma_1,\gamma_2>0, \gamma_1+\gamma_2>1$ and $k\in \mathbb Z_+$ be such that $k>\frac{4}{\gamma_1+\gamma_2-1}$. 
Let $A_j,B_j\subset \mathcal O/\prod_{i\in I} \mathcal P_i^{n_i}(1\leq i\leq k)$ satisfying, for any $\prod_{i\in I}\mathcal P_i^{m_i}, m_i\leq n_i$, we have 
\begin{align}\nonumber\max_{\xi\in \mathcal O/ \prod_{i\in I}\mathcal P_i^{m_i}}|x\in A_j | \pi_{\prod_{i\in I}\mathcal P_i^{m_i}} (x)=\xi |<  |\cO/\prod_i \cP_i^{m_i} |^{-\gamma_1}|A_j|. \\
\max_{\xi\in \mathcal O/ \prod_{i\in I}\mathcal P_i^{m_i}}|x\in B_j | \pi_{\prod_{i\in I}\mathcal P_i^{m_i}} (x)=\xi |<  |\cO/\prod_i \cP_i^{m_i} |^{-\gamma_2}|B_j|.\label{0712}
\end{align}

Let $\nu$ be the image measure on $R$ of the normalized counting measure on $\prod_{j=1}^k(A_j\times B_j)$ under the map

$$(x_1,y_1,\cdots, x_k, y_k)\rightarrow x_1y_1+\cdots+x_ky_k.$$
Then for any $z\in \mathcal O/\prod_{i}\cP_i^{n_i}$ with $\langle \pi_{\cO/\prod_{i\in I}\cP_i^{n_i}}^{-1}(z), \prod_{i\in I} \mathcal P_i^{n_i}\rangle=\prod_{i\in I} \mathcal P_i^{m_i},$ we have 
$$|\hat{\nu}(\chi_z)|< q_z^{-\frac{k(\gamma_1+\gamma_2-1)}{2}},$$
where  \begin{align*}\label{1127}
q_z:=\prod_{i\in I} |\cO/\cP_i|^{(n_i-m_i)}. 
 \end{align*}
 As a result, if $k(\gamma_1+\gamma_2-1)\geq 4$,  we have $$\sum_{j=1}^{k}A_jB_j\supset \mathcal O/\prod_i\mathcal P_i^{n_i}. $$
\end{lem}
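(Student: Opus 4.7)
My plan is to bound $\hat{\nu}(\chi_z)$ via a factorization into single-factor pieces, bound each piece by a single application of Cauchy–Schwarz combined with the two non-concentration hypotheses, and finish with Fourier inversion. Concretely, since $\nu$ is the additive convolution of the image measures $\nu_1,\dots,\nu_k$, where $\nu_j$ is the pushforward of the normalized counting measure on $A_j\times B_j$ under $(x,y)\mapsto xy$, one has
\[
\hat{\nu}(\chi_z)=\prod_{j=1}^{k}\hat{\nu}_j(\chi_z),\qquad \hat{\nu}_j(\chi_z)=\frac{1}{|A_j|\,|B_j|}\sum_{\substack{x\in A_j\\y\in B_j}}\chi(zxy).
\]
It therefore suffices to establish the single-factor bound $|\hat{\nu}_j(\chi_z)|\leq q_z^{-(\gamma_1+\gamma_2-1)/2}$ for every $j$.

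For this I would write $\hat{\nu}_j(\chi_z)=\mathbb{E}_{y\in B_j}\hat{\lambda}_j(\chi_{zy})$, where $\lambda_j$ is the normalized counting measure on $A_j$, and apply Cauchy–Schwarz in the $y$-variable to get
\[
|\hat{\nu}_j(\chi_z)|^2\leq \frac{1}{|B_j|}\sum_{y\in B_j}|\hat{\lambda}_j(\chi_{zy})|^2=\frac{1}{|B_j|}\sum_{w\in zR}|\{y\in B_j:zy=w\}|\,|\hat{\lambda}_j(\chi_w)|^2.
\]
Now, because $\langle \pi^{-1}(z),\prod\mathcal P_i^{n_i}\rangle=\prod\mathcal P_i^{m_i}$, the annihilator $\mathrm{ann}(z)$ in $R:=\mathcal O/\prod \mathcal P_i^{n_i}$ is $\prod \mathcal P_i^{n_i-m_i}/\prod \mathcal P_i^{n_i}$, so fibres of $y\mapsto zy$ are cosets of $\mathrm{ann}(z)$; projected to $\mathcal O/\prod \mathcal P_i^{n_i-m_i}$ they have size $|\mathcal O/\prod\mathcal P_i^{n_i-m_i}|=q_z$, and the non-concentration \eqref{0712} for $B_j$ gives $|\{y\in B_j:zy=w\}|\leq|B_j|\,q_z^{-\gamma_2}$.

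For the remaining Fourier $L^2$-sum, observe that each $\chi_w$ with $w\in zR$ is trivial on $\mathrm{ann}(z)$ and therefore factors through $R/\mathrm{ann}(z)\cong \mathcal O/\prod \mathcal P_i^{n_i-m_i}$. Parseval on that quotient yields
\[
\sum_{w\in zR}|\hat{\lambda}_j(\chi_w)|^2=\frac{q_z}{|A_j|^2}\sum_{\xi\in \mathcal O/\prod\mathcal P_i^{n_i-m_i}}|A_j\cap \pi^{-1}(\xi)|^2\leq \frac{q_z}{|A_j|^2}\cdot \frac{|A_j|}{q_z^{\gamma_1}}\cdot|A_j|=q_z^{1-\gamma_1},
\]
where I used the non-concentration \eqref{0712} for $A_j$ to bound $\max_\xi |A_j\cap\pi^{-1}(\xi)|$. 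Assembling the pieces gives $|\hat{\nu}_j(\chi_z)|^2\leq q_z^{1-\gamma_1-\gamma_2}$, whence $|\hat{\nu}(\chi_z)|\leq q_z^{-k(\gamma_1+\gamma_2-1)/2}$.

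For the second assertion, when $k(\gamma_1+\gamma_2-1)\geq 4$ I get $|\hat{\nu}(\chi_z)|\leq q_z^{-2}$ for every $z\neq 0$. Fourier inversion on $R$ then gives
\[
\nu(r)\geq \frac{1}{|R|}\Bigl(1-\sum_{z\neq 0}|\hat{\nu}(\chi_z)|\Bigr)\geq \frac{1}{|R|}\Bigl(1-\sum_{z\neq 0}q_z^{-2}\Bigr),
\]
and a short counting argument (grouping $z\in R$ by the valuation profile $(v_i(z))$ and using $\#\{z:v_i(z)=m_i\,\forall i\}\leq q_z$) shows $\sum_{z\neq 0}q_z^{-2}<1$, so $\nu(r)>0$ for every $r\in R$, i.e. $\sum_{j=1}^kA_jB_j\supset R$. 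The one place that will need care is the counting step at the very end for general $\mathcal O$ and general ramification, since one must verify the geometric-series bound prime-by-prime (locally this reduces to the $\mathbb Z/p^n$ computation $\sum_{k=0}^{n-1}p^{n-k}(1-p^{-1})p^{-2(n-k)}<1$, and coprimality of the $\mathcal P_i$-factors allows multiplicativity of the total sum).
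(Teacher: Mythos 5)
Your proof of the character-sum bound $|\hat{\nu}(\chi_z)|<q_z^{-k(\gamma_1+\gamma_2-1)/2}$ is correct and is essentially the paper's argument in a different packaging: the paper works directly with the $2k$-fold exponential sum and the fibre-counting functions $\eta_1^{(j)},\eta_2^{(j)}$ on $\cO/\prod_i\cP_i^{n_i-m_i}$, applying Cauchy--Schwarz in the $A_j$-variable and orthogonality in the $B_j$-variable, whereas you factor $\hat\nu=\prod_j\hat\nu_j$ first and run Cauchy--Schwarz in $y\in B_j$ followed by Parseval on $R/\mathrm{ann}(z)$. The roles of the two hypotheses in \eqref{0712} are swapped but the computation is the same, and your identification of $\mathrm{ann}(z)$ with $\prod_i\cP_i^{n_i-m_i}/\prod_i\cP_i^{n_i}$, hence of the fibres of $y\mapsto zy$ with fibres of $\pi_{\prod_i\cP_i^{n_i-m_i}}$, is exactly what makes \eqref{0712} applicable. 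No issues up to and including $|\hat\nu(\chi_z)|\le q_z^{1-\gamma_1-\gamma_2}$ per factor.

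The gap is in the very last counting step, which you correctly flagged as the delicate point but then asserted goes through: the claim $\sum_{z\neq 0}q_z^{-2}<1$ is false in general. Grouping $z$ by its annihilator profile, the number of $z$ with $q_z=\prod_i N_i^{t_i}$ (where $N_i=|\cO/\cP_i|$, $t_i=n_i-m_i$) is $\prod_i(N_i^{t_i}-N_i^{t_i-1})$, so the sum is multiplicative and equals $\prod_i\bigl(1+N_i^{-1}(1-N_i^{-n_i})\bigr)-1$. Each local factor is indeed $<1+N_i^{-1}$, as in your $\mathbb Z/p^n$ computation, but the product of terms each less than $2$ minus $1$ need not be less than $1$: already $N_i\in\{2,3,5\}$ gives $(3/2)(4/3)(6/5)-1=1.4$. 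So multiplicativity works against you here. The fix is either to demand $k(\gamma_1+\gamma_2-1)\geq 6$ (then $\sum_{z\neq0}q_z^{-3}\leq\sum_{Q\geq2}Q\cdot Q^{-3}<1$), or to restrict the number or size of the prime factors of $\fa$. You should be aware that the paper's own write-up of this step is equally loose --- it replaces $\sum_z q_z^{-c}$ by $\sum_{q\geq2}q^{-c}$ without accounting for the multiplicity $\approx q_z$ of each value --- so this is an inherited defect of the lemma's stated constant rather than a flaw specific to your approach; but as written your justification of the containment $\sum_j A_jB_j\supset\cO/\prod_i\cP_i^{n_i}$ does not close.
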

\begin{proof}
For $\xi\in \mathcal O/\prod_i \mathcal P_i^{n_i}$, we have 
\begin{align}
\nu(\xi)=& \frac{1}{\prod_{j=1}^k |A_j||B_j|}|\{(x_1,y_1,\cdots, x_k,y_k)\in A_1\times B_1\times\cdots \times A_k\times B_k| x_1y_1+\cdots+ x_ky_k=\xi\}| \\ 
\label{1122}= &\frac{1}{|\cO/\prod_i \cP_i^{n_i}| } \frac{1}{\prod_{j=1}^k |A_j||B_j|}\sum_{\substack{z\in\mathcal O/\prod_i\mathcal P_i^{n_i} }}\sum_{\substack {x_i\in A_i, y_i\in B_i}}\chi_z (\xi-x_1y_1-\cdots-x_ky_k)
\end{align}

We estimate $|\sum_{x_j\in A_j, y_j\in B_j}\chi_z (\xi-x_1y_1-\cdots-x_ky_k)|$.  Suppose 
$$z\cdot \cO/\prod_{i\in I}\cP_i^{n_i}=\prod_{i\in I}\cP_i^{m_i}/\prod_{i\in I}\cP_i^{n_i}$$
For $\xi\in \mathcal O/\prod_i\mathcal P_i^{n_i-m_i}$, we set 
\begin{align}
\eta_1^{(j)}(\xi)=| \{x\in A_j| \pi_{\prod_i\mathcal P_i^{n_i-m_i}}(x)=\xi \} |< q_z^{-\gamma_1}|A_j|
\end{align}
and 
\begin{align}
\eta_2^{(j)}(\xi)=| \{x\in B_j| \pi_{\prod_i\mathcal P_i^{n_i-m_i}}(x)=\xi \} |< q_z^{-\gamma_2}|B_j|,
\end{align}
where \begin{align*}
q_z:=\prod_{i\in I} |\cO/\cP_i|^{(n_i-m_i)}. 
 \end{align*}
Then 
\begin{align*}
&|\sum_{x_i\in A_i, y_i\in B_i}\chi_z (\xi-x_1y_1-\cdots-x_ky_k)|\\
=&\prod_{j=1}^k | \sum_{\xi_1,\xi_2\in \mathcal O/\prod_i\mathcal P_i^{m_i}}\eta_1^{(j)}(\xi_1)\eta_2^{(j)}(\xi_2)\chi_z(\xi_1\xi_2)| \\
\leq &\prod_{j=1}^k\left(\left[\sum_{\xi_1\in  \mathcal O/\prod_i\mathcal P_i^{n_i-m_i}} \eta_1^{(j)}(\xi_1)^2 \right]^{1/2}\left[ \sum_{\xi_1\in  \mathcal O/\prod_i\mathcal P_i^{n_i-m_i}} |\sum_{\xi_2\in  \mathcal O/\prod_i\mathcal P_i^{n_i-m_i}}\eta_2^{(j)}(\xi_2)\chi_z(\xi_1\xi_2)|^2  \right]^{1/2}\right) \\
=&\prod_{j=1}^k\left(\left[\sum_{\xi_1\in  \mathcal O/\prod_i\mathcal P_i^{n_i-m_i}} \eta_1^{(j)}(\xi_1)^2 \right]^{1/2}\left[ q_z \sum_{\xi_2\in  \mathcal O/\prod_i\mathcal P_i^{n_i-m_i}}\eta_2^{(j)}(\xi_2)^2   \right]^{1/2}\right) \\
\leq & q_z^{\frac{k(1-\gamma_1-\gamma_2)}{2}}\prod_{j=1}^k |A_j||B_j|
\end{align*}
Therefore, 
$$\frac{1}{|\cO/\prod_i \cP_i^{n_i}| } \frac{1}{\prod_{j=1}^k |A_j||B_j|}|\sum_{\substack{z\in\mathcal O/\prod_i\mathcal P_i^{n_i}-\{0\} }}\sum_{\substack {x_i\in A_i, y_i\in B_i}}\chi_z (\xi-x_1y_1-\cdots-x_ky_k)|$$ is majorized by 
\begin{align*}&\frac{1}{|\cO/\prod_i \cP_i^{n_i}| }\sum_{z\in \mathcal O/\prod_{i}(p_i^{n_i})}q_z^{-\frac{k(\gamma_1+\gamma_2-1)}{2}} \\ \leq & \frac{1}{|\cO/\prod_i \cP_i^{n_i}| }\sum_{q\geq 2} q^{-\frac{k(\gamma_1+\gamma_2-1)}{2}}\\\leq &\frac{1}{|\cO/\prod_i \cP_i^{n_i}| }\left(\frac{1}{2^{\frac{k(\gamma_1+\gamma_2-1)}{2}}}+\int_{2}^\infty \frac{1}{x^{\frac{k(\gamma_1+\gamma_2-1)}{2}}}dx\right) \\ \leq&\frac{1}{|\cO/\prod_i \cP_i^{n_i}| }\left(\frac{1}{2^{\frac{k(\gamma_1+\gamma_2-1)}{2}}}+ \frac{2^{-\frac{k(\gamma_1+\gamma_2-1)}{2}+1}}{\frac{k(\gamma_1+\gamma_2-1)}{2}}\right)\\ \leq &\frac{1}{2|\cO/\prod_i \cP_i^{n_i}| },
\end{align*}
if $k(\gamma_1+\gamma_2-1)\geq 4$.  \par
Therefore, 
$$\nu(\xi)\geq \frac{1}{4|\cO/\prod_i \cP_i^{n_i}| },$$
and the lemma is thus proved. 
\end{proof}

\begin{cor} \label{2346} Let $A\subset \mathcal O/ \prod_i \mathcal P_i^{n_i}$ with $|A|>q^{1-\gamma}$, $\gamma<\frac{1}{10}$, then there exists $\prod_{i}\cP_i^{l_i}$, $l_i\leq n_i$, such that  $$|\cO/\prod_{i\in I}\cP_i^{l_i}|<|\cO/\prod_i \cP_i^{n_i}|^{\frac{6\gamma}{5}},$$ and $$ \prod_{i\in I}\cP_i^{2l_i}/ \prod_i \mathcal P_i^{n_i}\subset \sum_{24}A^2-\sum_{24}A^2.$$
\end{cor}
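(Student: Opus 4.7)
The plan is to localize $A$ into a coset of a suitable ``concentration'' ideal $J=\prod_{i\in I}\cP_i^{l_i}$, transfer the problem to the quotient ring $R':=\cO/\prod_i\cP_i^{n_i-l_i}$ via the abelian-group isomorphism $R'\xrightarrow{\sim}J/\prod_i\cP_i^{n_i}$ given by multiplication by an element $\alpha\in\cO$ with $v_{\cP_i}(\alpha)=l_i$, and then apply the distributional sum-product input Lemma~\ref{0715} inside $R'$. Concretely, I would choose $J$ to be the deepest ideal (each $l_i$ as large as possible) for which some coset $\xi_0+J$ still captures a substantial share of $A$: $|A\cap\pi_J^{-1}(\xi_0)|\geq|A|\cdot|\cO/J|^{-\beta}$ for an exponent $\beta$ to be calibrated. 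The trivial fiber bound $|A\cap\pi_J^{-1}(\xi_0)|\leq q/|\cO/J|$ combined with $|A|>q^{1-\gamma}$ forces $|\cO/J|<q^{\gamma/(1-\beta)}$, and balancing $\beta$ against the hypothesis $\gamma<\tfrac{1}{10}$ is what is to deliver the claimed bound $|\cO/J|<q^{6\gamma/5}$.

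With $J$ and $\xi_0$ fixed, pick $a_0\in A$ reducing to $\xi_0$ and set $A_0:=(A\cap\pi_J^{-1}(\xi_0))-a_0$, so $A_0\subset J/\prod_i\cP_i^{n_i}$ contains $0$ and $|A_0|\geq|A|/|\cO/J|^\beta$. Writing $A_0=\alpha\tilde A$ for a unique $\tilde A\subset R'$ of the same cardinality, the maximality of $J$ transfers directly to non-concentration of $\tilde A$ in $R'$: for every proper subideal $J^*\subsetneq R'$ and every $\eta$ one has $|\tilde A\cap\pi_{J^*}^{-1}(\eta)|/|\tilde A|\leq|R'/J^*|^{-\beta}$, since subideals of $R'$ correspond under multiplication by $\alpha$ to strict subideals of $J$ in $R$, at which $A$ carries no concentration by the maximality of $J$.

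Taking $\beta\geq 2/3$, Lemma~\ref{0715} applied symmetrically with $A_j=B_j=\tilde A$, $\gamma_1=\gamma_2=\beta$, $k=12$ then yields $\sum_{12}\tilde A\cdot\tilde A\supset R'$. Because $A_0\cdot A_0=\alpha^2\,\tilde A\cdot\tilde A$ in $R$ depends only on $\tilde A\cdot\tilde A$ modulo $\prod_i\cP_i^{n_i-2l_i}$, pushing forward through the isomorphism gives $\sum_{12}A_0^2\supset\alpha^2\bigl(\cO/\prod_i\cP_i^{n_i-2l_i}\bigr)=\prod_i\cP_i^{2l_i}/\prod_i\cP_i^{n_i}$. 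Finally, since $A_0\subset A-a_0$ and the expansion $(a_1-a_0)(a_2-a_0)=a_1a_2-a_1a_0-a_0a_2+a_0^2$ shows $A_0^2\subset\sum_2 A^2-\sum_2 A^2$, we conclude $\sum_{24}A^2-\sum_{24}A^2\supset\sum_{12}A_0^2\supset\prod_i\cP_i^{2l_i}/\prod_i\cP_i^{n_i}$, as required.

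The main obstacle is the tension between the two roles of $\beta$: to compress $|\cO/J|$ below $q^{6\gamma/5}$ via the trivial fiber bound one wants $\beta$ small (close to $1/6$), while Lemma~\ref{0715} with $k=12$ demands $\gamma_1+\gamma_2>1$ and hence $\beta\geq 2/3$ in the symmetric application. The naive choice $\beta=2/3$ only yields the weaker estimate $|\cO/J|<q^{3\gamma}$, so closing the remaining gap down to $q^{6\gamma/5}$ will likely require either an iterative descent through a chain of progressively deeper concentration ideals (each contributing a controlled factor to $|\cO/J|$, with the total budget kept under $q^{6\gamma/5}$ thanks to $\gamma<\tfrac{1}{10}$), or a two-scale refinement that separates the concentration threshold used to bound $|\cO/J|$ from the non-concentration exponent fed into Lemma~\ref{0715}.
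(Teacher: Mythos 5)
Your proposal follows essentially the same route as the paper's own proof: the paper likewise picks a \emph{minimal} translate $a+\prod_i\cP_i^{l_i}$ with $|A\cap(a+\prod_i\cP_i^{l_i}/\prod_i\cP_i^{n_i})|\geq|\cO/\prod_i\cP_i^{l_i}|^{-2/3}|A|$, renormalizes to a set $A'\subset\cO/\prod_i\cP_i^{n_i-l_i}$ which inherits the non-concentration exponent $2/3$ at every level from the minimality, applies Lemma~\ref{0715} with $A_j=B_j=A'$, $\gamma_1=\gamma_2=2/3$, $k=12$, and pushes the resulting surjectivity back through multiplication by $\prod_i\fp_i^{l_i}$; your difference identity $(a_1-a_0)(a_2-a_0)=a_1a_2-a_1a_0-a_0a_2+a_0^2$ is the cleanest way to account for the cross terms and for the factor $24$ and the minus sign in $\sum_{24}A^2-\sum_{24}A^2$.

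The ``main obstacle'' you flag at the end is real, but it is a defect of the stated constant rather than of your argument, and the paper's own proof does not resolve it: with the threshold $|\cO/J|^{-2/3}$ the trivial fiber bound gives exactly $|\cO/J|<q^{3\gamma}$, as you compute, whereas the proof in the paper simply asserts $|\cO/\prod_i\cP_i^{l_i}|<q^{3\gamma/5}$ at this point, which does not follow. (A threshold exponent of $-1/6$ would yield the advertised $q^{6\gamma/5}$, but would only transfer non-concentration with exponent $1/6$ to $A'$, violating the requirement $\gamma_1+\gamma_2>1$ of Lemma~\ref{0715}; so the two desiderata cannot both be met by a single threshold, exactly the tension you describe.) There is no need for the iterative descent or two-scale refinement you contemplate: every invocation of this corollary in the paper only uses a bound of the form $|\cO/\prod_i\cP_i^{l_i}|<q^{O(\gamma)}$, so the honest statement is the one you actually prove, with $6\gamma/5$ replaced by $3\gamma$ (and $\gamma<1/10$ still guaranteeing, e.g., $|A'|>|R'|^{2/3}$ so that the non-concentration hypothesis of Lemma~\ref{0715} holds down to single residues). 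Modulo relaxing that exponent, your proof is complete and is the paper's proof.
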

\begin{proof}  First we find a minimal translate $a+\prod_{i\in I}\cP_i^{l_i}$
 such that 
$$|A\cap (a+\prod_{i\in I}\cP_i^{l_i}/\prod_{i\in I}\cP_i^{n_i})|\geq  |\cO /\prod_{i\in I}\cP_i^{l_i}|^{-\frac{2}{3}}|A|.$$
and we let $A'\subset \prod_i\mathcal P^{n_i-l_i}$ be such that $x\mapsto a+(\prod_{i\in I}\fp^{l_i})x$ gives a bijection between $A'$ and $A\cap  (a+\prod_{i\in I}\cP_i^{l_i}/\prod_{i\in I}\cP_i^{n_i})$. 

Then we have
\begin{align}
&|\cO/\prod_i \cP_i^{l_i}|< |\cO/\prod_i \cP_i^{n_i}|^{\frac{3\gamma}{5}},\\
&|A'|>|\cO/\prod_i \cP_i^{n_i}|^{1-\frac{7\gamma}{5}}.
\end{align}
If we let $A_i=B_i=A'$ in the Lemma \ref{0715} with modulus replaced by $\prod_i \mathcal P_i^{n_i-l_i}$, then the assumption for $A_i, B_i$ is satisfied with $\gamma_1=\gamma_2=\frac{2}{3}$.  An application of Lemma \ref{0715} with $k=12$ leads to 
$$\sum_{12}(A')^2=\mathcal O/\prod_i\mathcal P_i^{n_i-l_i},$$
which implies 
$$\sum_{24}A^2-\sum_{24}A^2\supset  (\prod_i \cP_i^{2l_i})/ \prod_i\mathcal P_i^{n_i}.$$
\end{proof}

Next, We invoke a lemma about symmetric polynomials:
\begin{lemma}
\label{polyiden}
Fix $k\in \mathbb Z_+$. Define
\[P_t=\sum\limits_{1\leq i_1<i_2<\cdots <i_t\leq k} \left(\sum\limits_{j=1}^tx_{i_j}\right)^k\in \mathbb{Q}[x_1, x_2, \ldots, x_k]\]
for each $1\leq t\leq k$. Then 
\[\sum\limits_{i=0}^{k-1}(-1)^iP_{k-i}=k!\ \prod_{i=1}^kx_i.\]
\textbf{Remark.} The identities for $k=2, 3$ are
\[(a+b)^2-(a^2+b^2)=2ab\]
and
\[(a+b+c)^3-\left((a+b)^3+(a+c)^3+(b+c)^3\right)+(a^3+b^3+c^3)=6abc.\]
\end{lemma}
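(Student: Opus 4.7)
The plan is to recognize the left-hand side as an instance of an inclusion-exclusion formula and evaluate it by expanding the $k$-th powers monomially. Since the identity is symmetric in all the $x_i$ and $P_t$ depends only on the subset of indices of size $t$, the natural rewriting is
\[
\sum_{i=0}^{k-1}(-1)^{i}P_{k-i} \;=\; \sum_{t=1}^{k}(-1)^{k-t}P_{t} \;=\; \sum_{\emptyset\neq S\subseteq[k]} (-1)^{k-|S|}\Bigl(\sum_{j\in S}x_{j}\Bigr)^{k}.
\]
Because the $S=\emptyset$ term contributes $0$ (as $k\geq 1$), we may include it and sum over all $S\subseteq[k]$ without changing the value.

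Next I would expand each $k$-th power as
\[
\Bigl(\sum_{j\in S}x_{j}\Bigr)^{k} \;=\; \sum_{f:[k]\to S}\prod_{i=1}^{k}x_{f(i)},
\]
and swap the order of summation to group by the function $f:[k]\to[k]$. This yields
\[
\sum_{S\subseteq[k]} (-1)^{k-|S|}\Bigl(\sum_{j\in S}x_{j}\Bigr)^{k} \;=\; \sum_{f:[k]\to[k]}\Bigl(\prod_{i=1}^{k}x_{f(i)}\Bigr)\cdot \alpha(f),
\]
where $\alpha(f):=\sum_{S\supseteq f([k])}(-1)^{k-|S|}$. Writing $T=f([k])$ and $m=|T|$, each such $S$ is $T$ together with an arbitrary subset $U\subseteq[k]\setminus T$, so
\[
\alpha(f) \;=\; (-1)^{k-m}\sum_{j=0}^{k-m}\binom{k-m}{j}(-1)^{j},
\]
which is $0$ unless $m=k$, i.e., unless $f$ is a bijection, in which case $\alpha(f)=1$.

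Therefore only the $k!$ bijections $[k]\to[k]$ survive, and each contributes $\prod_{i=1}^{k}x_{i}$, giving
\[
\sum_{i=0}^{k-1}(-1)^{i}P_{k-i} \;=\; k!\,\prod_{i=1}^{k}x_{i},
\]
as required. The only real content is the vanishing of the alternating binomial sum when $m<k$, which is standard; the rest is bookkeeping. There is no genuine obstacle beyond keeping the signs and index ranges straight when converting $\sum_{i=0}^{k-1}(-1)^{i}P_{k-i}$ into the signed sum over subsets.
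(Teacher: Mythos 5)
Your proof is correct, and it is essentially the paper's argument in a different bookkeeping: the paper fixes a degree-$k$ monomial supported on $j$ variables and shows its coefficient $\sum_{t}(-1)^{k-t}\binom{k-j}{t-j}\frac{k!}{\alpha_1!\cdots\alpha_j!}$ vanishes unless $j=k$, while you fix a function $f:[k]\to[k]$ and sum $(-1)^{k-|S|}$ over supersets $S$ of its image; both reduce to the same identity $(1-1)^{k-j}=0$. The surviving terms (bijections in your version, the monomial $x_1\cdots x_k$ in the paper's) give $k!\prod_i x_i$ in either case, so no further comment is needed.
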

\textit{Proof.} It suffices to check that the coefficient for all degree-$k$ monomials in $x_1, x_2, \ldots, x_k$ are the same on both sides. Consider the monomial $f=x_{i_1}^{\alpha_1}x_{i_2}^{\alpha_2}\cdots x_{i_j}^{\alpha_j}$ where $1\leq j \leq k$, $1\leq i_1<i_2<\cdots <i_j\leq k$ and $\sum\limits _{l=1}^ja_{i_l}=k.$ The coefficient is non-zero in $P_t$ if and only if $j\leq t\leq k$, and in these cases the coefficient of $f$ in $P_t$ is
\[\binom{k-j}{t-j}\frac{k!}{\alpha_1!\alpha_2!\cdots \alpha_j!}.\]
If $j=k$, then $f=x_1x_2\cdots x_k$ is the only possible case and the coefficients of $f$ on both sides are both $k!$. Therefore, it suffices to show
\[\sum\limits_{t=j}^k(-1)^{k-t}\binom{k-j}{t-j}\frac{k!}{\alpha_1!\alpha_2!\cdots \alpha_j!}=0\]
for all $j\leq k-1, j\leq t\leq k-1$, which is equivalent to
\[\sum\limits_{t=j}^k(-1)^{k-t}\binom{k-j}{t-j}=0\Leftrightarrow \sum\limits_{t=0}^{k-j}(-1)^{k-j-t}\binom{k-j}{t}=0.\]
It's clear that the last identity follows from the binomial expansion
\[0=(1-1)^{k-j}=\sum\limits_{t=0}^{k-j}(-1)^{k-j-t}\binom{k-j}{t}.\]
This gives the proof of Lemma $\ref{polyiden}$.
\\

\newpage
\bibliographystyle{alpha}

\bibliography{boundedgenerationV3}

\begin{thebibliography}{BGK06}

\bibitem[BC06]{BC06}
Jean Bourgain and M-C Chang.
\newblock Exponential sum estimates over subgroups and almost subgroups of,
  where q is composite with few prime factors.
\newblock {\em Geometric \& Functional Analysis GAFA}, 16(2):327--366, 2006.

\bibitem[BG09]{BG09}
Jean Bourgain and Alex Gamburd.
\newblock Expansion and random walks in {$\text{SL}_d({\mathbb Z}/p^n{\mathbb
  Z})$}, {II}.
\newblock {\em J. Eur. Math. Soc.(JEMS)}, 11(5):1057--1103, 2009.

\bibitem[BGK06]{BK06}
Jean Bourgain, Alexey~A Glibichuk, and Sergei~Vladimirovich Konyagin.
\newblock Estimates for the number of sums and products and for exponential
  sums in fields of prime order.
\newblock {\em Journal of the London Mathematical Society}, 73(2):380--398,
  2006.

\bibitem[BKT04]{BKT04}
Jean Bourgain, Nets Katz, and Terence Tao.
\newblock A sum-product estimate in finite fields, and applications.
\newblock {\em Geometric and Functional Analysis GAFA}, 14(1):27--57, 2004.

\bibitem[Bou08]{Bou08}
Jean Bourgain.
\newblock The sum-product theorem in {${\mathbb Z}/q{\mathbb Z}$} with $q$
  arbitrary.
\newblock {\em Journal d'Analyse Math{\'e}matique}, 106(1):1, 2008.

\bibitem[BV12]{BV12}
Jean Bourgain and P{{\'e}}ter~P. Varj{{\'u}}.
\newblock Expansion in {$\text{SL}_d({\mathbb Z}/q{\mathbb Z}),\,q$} arbitrary.
\newblock {\em Invent. Math.}, 188(1):151--173, 2012.

\bibitem[Gol19]{SG19}
Alireza~Salehi Golsefidy.
\newblock Super-approximation, {II}: the $ p $-adic case and the case of
  bounded powers of square-free integers.
\newblock {\em Journal of the European Mathematical Society}, 21(7):2163--2232,
  2019.

\bibitem[Gol20]{SG20}
Alireza~Salehi Golsefidy.
\newblock {S}um-product phenomena: $p$-adic case.
\newblock {\em Journal d'Analyse Math{\'e}matique}, 142(2):349--419, 2020.

\bibitem[GV12]{GV12}
A.~Salehi Golsefidy and P{{\'e}}ter~P. Varj{{\'u}}.
\newblock Expansion in perfect groups.
\newblock {\em Geom. Funct. Anal.}, 22(6):1832--1891, 2012.

\bibitem[Nat96]{Nathanson1996}
Melvyn~B Nathanson.
\newblock {\em Additive number theory: Inverse problems and the geometry of
  sumsets}, volume 165.
\newblock Springer Science \& Business Media, 1996.

\bibitem[Sch15]{Schoen2015}
Tomasz Schoen.
\newblock New bounds in balog-szemer{\'e}di-gowers theorem.
\newblock {\em Combinatorica}, 35(6):695--701, 2015.

\bibitem[Tao08]{Tao2008}
Terence Tao.
\newblock The sum-product phenomenon in arbitrary rings.
\newblock {\em arXiv preprint arXiv:0806.2497}, 2008.

\bibitem[TZ23]{TZ23b}
Jincheng Tang and Xin Zhang.
\newblock Super approximation for {$\text{SL}_2({\mathbb Z}/q{\mathbb
  Z})\times\text{SL}_2({\mathbb Z}/q{\mathbb Z})$}.
\newblock 2023.

\end{thebibliography}

\end{document}